\newcommand{\R}{\mathbb{R}}
\newcommand{\Rm}{\mathbb{R}^+}
\newcommand{\C}{\mathbb{C}}
\newcommand{\N}{\mathbb{N}}
\newcommand{\Z}{\mathbb{Z}}
\newcommand{\h}{{H}}
\newcommand{\bb}{\beta}
\newcommand{\X}{\mathcal{X}}
\newcommand{\F}{\mathcal{F}}
\newcommand{\D}{\mathcal{D}}
\newcommand{\iF}{\mathcal{F}^{-1}}
\newcommand{\vl}{v^{\lambda}}
\newcommand{\tv}{\tilde{v}}
\newcommand{\tu}{\tilde{u}_0}
\newcommand{\supp}{\text{supp}}
\newcommand{\bv}{\Bar{v}}
\numberwithin{equation}{section}
\newtheorem{theorem}{Theorem}[section]
\newtheorem{lemma}[theorem]{Lemma}
\newtheorem{claim}[theorem]{Claim}
\newtheorem{proposition}[theorem]{Proposition}
\newtheorem{definition}[theorem]{Definition}
\newtheorem{remark}[theorem]{Remark}
\newcommand{\beq}{\begin{equation}}
\newcommand{\eeq}{\end{equation}}
\newcommand{\be}{\begin{eqnarray*}}
\newcommand{\ee}{\end{eqnarray*}}
\newcommand{\beqn}{\begin{equation*}}
\newcommand{\eeqn}{\end{equation*}}
\newcommand{\bc}{\begin{cases}}
\newcommand{\ec}{\end{cases}}
\newcommand{\SLT}{\square}
\title[$\mathbf{L^{2}}-$ Well-posedness and Bounded Controllability of KdV-B equation]{$L^{2
}$ Well-posedness and Bounded controllability for the Korteweg-de Vries-Burger equation in a half-plane \\ }
\author[Esquivel]{Liliana Esquivel}
\address{Departament of Mathematical Science, Universidad de Puerto Rico Recinto Mayag\"uez, 259 Av. Alfonso Valdés Cobián, Mayagüez, Puerto Rico}
\email{liliana.esquivel@upr.edu} 
\author[Rivas]{Ivonne Rivas}
\address{Universidad del Valle, Departamento de Matem\'aticas, Calle 13 No. 100 - 00, Ciudadela Universitaria Mel\'endez, Cali, Colombia.} 
\email{ivonne.rivas@correounivalle.edu.co}
\date{}
\begin{document}

\begin{abstract}
In this paper, the initial boundary value problem of the Korteweg-de Vries Burger equation on the negative half-plane is analyzed.  Initially, the well-posedness on $H^s(\R^-)$ for $s\geq 0$ of the IBVP is established to concentrate on the  $L^2(\R^-)$ controllability problem when the controls are in the Dirichlet and Newmann conditions at $x=0$.	\end{abstract}
	
	\maketitle
	
\section{Introduction}

 This paper addresses a dispersive problem in an unbounded domain.  The Korteweg-de Vries-Burger  (KdV-B) equation on the negative half-line. The initial boundary value problems (IBVP)  for the Generalized KdV-B equations on the  half line are\\

 On the positive half-line
 \begin{equation}\label{1.1.0}  
        \begin{cases}
        u_{t}+u_{xxx}-u_{xx}=\frac{1}{2}(u^2)_x,\ &t>0, \ x>0, \\
		u(x,0)=u_{0}(x), &x>0,\\
		u(0,t)=h(t), &t > 0,   
  \end{cases}
	\end{equation}
 which requires just one boundary condition to be well-defined.\\

 On the negative half-line
     	\begin{equation} \label{1.2} \begin{cases}u_{t}+u_{xxx}-u_{xx}=\frac{1}{2}(u^2)_x,& t>0, \ x<0, \\
		u(x,0)=u_{0}(x),& x<0,\\
		u(0,t)=h(t), \ u_x(0,t)=g(t),  &t > 0, 
  \end{cases}
\end{equation}
 than requires two boundary conditions to be well-defined. 
 It is interesting to observe, that the IBVPs   \eqref{1.1.0} and \eqref{1.2} can be summarized on one problem, considering the parameter $a= 0,1$, as it follows
 	\begin{equation}\label{1.1} \begin{cases}u_{t}+(-1)^a u_{xxx}-u_{xx}=\frac{1}{2}(u^2)_x,& t>0, \ x>0, \\
 		u(x,0)=u_{0}(x), &x>0,\\
 		u(0,t)=h(t),  &t > 0,  \textnormal{ }\\
 	 u_x(0,t)=g(t),  &t>0, \quad \text{ only in the case }a =1, \end{cases}
 \end{equation}
which allows us to study just one of the problems.\\

The KdV-B equation was derived by Sun and Gardner, see \cite{SuGardner}, as a dispersion, viscosity, and nonlinear advection model. This equation also models long wave propagation in shallow water in \cite{RS1}, propagation of waves in elastic tube stuffed with a viscous fluid in \cite{RS2}, weakly nonlinear plasma waves with dissipative effects, see \cite{Hu}, among other applications.  Mathematically, the KdV-B equation is a long wavelength approximation of the impact of the non-linearity and the dispersion term.\\

If only the higher order terms are considered in the linear operator the equation becomes the very well study KdV equation $u_{t}+ u_{xxx}=uu_x,$ derived by Korteweg and Vries in \cite{ KDV} as an equation that models surface waves with small amplitude, long surface propagating in a shallow channel of water. However,
if it is considered only the lower terms the Burger equation is archived $u_t-u_{xx}=uu_x$, which is proposed initially by  Bateman, see \cite{Bateman} and later studied by Burger in \cite{Burger}  as a model of the turbulence and approximate theory of flow through a shock wave traveling in a viscous fluid in \cite{Gupta1,Wazwaz}.  When diffusion dominates dispersion, the numerical solutions of the KdV-B tend to behave like Burgers equation solutions, that is the steady-state solutions of the KdV-B equation are monotonic shocks. Moreover, when dispersion dominates, the KdV equation behavior is observed and the shocks are oscillatory, see \cite{Gupta2}.\\

The last result known by the authors for the well-posedness problem of the KdV-B equation \eqref{1.1.0} was made by Bona, Sun, and Zhang in \cite{BSZ08}, when the KdV-B has a transport term, $u_x$, showing that it is local well-posedness in $H^s(\R^+)$  for $s>-1$ and $s\neq 3m+\frac{1}{2}$ with $m=0,1,2,...$ when the data $u_0\in \h^{s}(\R^+) $ and the boundary condition is considered at $u(0,t)=h(t)\in \h^{\frac{s+1}{3}}(\R^+)$.  The results are based on the combination of the dispersion of the third-order derivative of the KdV equation and the dissipation of the Burger, in the negative second derivative term.  However the KdV-B equation in $\R$ has been studied by Kappeler and Topalov \cite{KT}, showing that it is globally well-posed in $H^s(\R)$ for $s\ge -1$, using inverse scattering approach, and simultaneously  Molinet and Ribaud \cite{MoRi} showing that the KdV-B in $\R$ is well-posed in $H^s(\R)$ for $s>-1$, using Bourgain and Kenig, Ponce and Vega approach, and ill-posed for $s<-1$, in the sense that the flow-map
$u_0 \to u(t)$ is not of class ${C}^2$-map from $\h^{\mathrm{s}}(\mathbb{R})$ to ${C}\left([0, {T}] ; \h^{\mathrm{s}}(\mathbb{R})\right)$.\\

The impact of the boundary condition at $x=0$ can be seen when  the equation \eqref{1.1} with $a=1$ is multiplied by $u$, assuming that $u$ is a smooth solution decaying rapidly to zero at infinity i.e. 
$$\frac{d}{dt} \int_0^{\infty} u^2(x,t)dx = -\frac{1}{2}g^2(t)+ h(t)\left( u_{xx}(0)+g(t)\right) - \int_0^{\infty}(u_x)^2dx, $$
from where, if $h\equiv 0$ the dissipation effect arises naturally and the dissipative effect of the KdV term is contributing subtly, together with the Burgers term. In this paper, it is required to find a full characterization of the solution of IBVP \eqref{1.2}, particularly to be able to analyze the regularity of the solution and the trace properties that are important in the study of the controllability.\\

Chapouly in \cite{Chapouly} has studied the exact controllability of the Burgers equation on an interval in the non-viscous and viscous cases when three controls are in play on the forcing term, and in each in the boundary terms, i.e $y(t,0)$ and $y(t,L)$. Marbach in \cite{Marbach} studied the exact controllability  of the systems     
\begin{gather*}
    \begin{cases}
    y_t-y_{xx}+yy_x=f(x,t)\chi_{[a,b]}\\
    y(0,t)=0\\
    y(1,t)=0
    \end{cases} \quad \text{and} \quad
    \begin{cases}
    y_t-y_{xx}+yy_x=0\\
    y(0,t)=0\\
    y(1,t)=h_1(t)
    \end{cases}
\end{gather*}

Chen \cite{Chen} showed time optimal internal controllability to zero in $[0,L]$ with homogeneous boundary conditions.  In the case of unbounded domains, the challenge falls over the property of compactness since it is important in the proof of controllability.  Gallego in \cite{Gallego} has studied the controllability of the KdV-Burgers in $\R$, showing the internal controllability when the solutions are assumed to be $L^2_{Loc}(\R)$ following the ideas of \cite{Rosierub}. However, a difference between Gallego's works and this paper is that we are considering $\mathbb R^-$ with two boundary conditions, and therefore more analysis in the boundary term, the Carleman inequality gives a different estimate, and the approaches used in the operator are different.  In this work, we obtain the natural bounded spaces for the problem addressed. \\

The paper is distributed as follows: in section 2, some helpful notation is introduced, and the main results are stated. In section 3,  the well-posedness of the IBVP \eqref{1.2} is addressed. The problem is divided into three different types of linear problems, and for each one, the respective estimates are computed to conclude with section 4, with the wellposedness of the IBVP of \eqref{1.2}.   In section 5, Carleman's estimate is shown.  In section 6, the proof of a negative result of controllability for the bounded solution of the KdV-B is proved, and the positive result of boundary controllability is shown, by using an approximation theorem together with particular claims that are shown in the appendix section.  

 \section{Notation and Main result}\label{Notation}
  
\noindent Some necessary notations and definitions of function spaces used in the formulating are introduced for our main result and further analysis. For $t\in \mathbb{R},$ we denote by $\langle  t \rangle:=\sqrt{1+|t|^2}$ and  $\{t\}:=\frac{|t|}{\langle t \rangle }$.  $A\lesssim_s B$ means that there exists a positive constant $c,$ such that $c$ does not depend on fundamental quantities on $A$ and $B,$ but depends on $s$ such that $A\leq c(s)B$, however when there is not a direct dependence the constant is denoted by $c$.\\
	
Henceforth, $\mathcal{S}(\R)$ denotes the space of rapidly decreasing functions.  We denote the usual Laplace transform,  $\mathcal{L}$, and its inverse, $\mathcal{L}^{-1},$ and  the integral operators are 
$$\mathcal{L}\phi (\xi)\equiv\widetilde{\phi }\left( \xi \right) :=\int\limits_{{0}}^{\infty }e^{-x\xi}\phi \left( x\right) dx \quad \text{and} \quad \mathcal{L}^{-1}\phi(x) =\frac{1}{2\pi i}\int\limits_{i\mathbf{R}}e^{ix\xi }\widetilde{\phi }\left( \xi \right) d\xi.	$$
The Fourier transform, $\mathcal{F}$  and its inverse, $\mathcal{F}^{-1}$,  are defined by 
$$\mathcal{F}(\phi)(p)= \widehat{\phi}(p):=\frac{1}{\sqrt{2\pi}}\int_{-\infty}^{\infty}e^{-ix p}\phi(x)dx \quad \text{and} \quad  \mathcal{F}^{-1}(\phi)(p)= {\phi}^{\vee}(x):=\frac{1}{\sqrt{2\pi}}\int_{\infty}^{\infty}e^{ix p}\widehat{\phi}(p)dp.$$

For $s\in \R $, the space  $\h^s(\R)$ is the set of distributions $f$ satisfying $(1+|\xi|)^s\widehat{f}(\xi)\in {L}^2_\xi(\R)$. \\
    
The space $\h^s_0(\R)$, is the closure of the class of functions in $\h^s(\mathbb{R})$ whose support lies in $\Rm.$ The homogeneous space
$\dot{\h^s}(\R)$  consisted of distributions satisfying  $|\xi|^s\widehat{f}(\xi)\in  L^2_\xi(\R)$.\\  

For  $l\in \R$, such that  $0<l<L$ and $t_1,t_2 \in\R$, with $0<t_1<t_2<T$, we consider the  domains 
\begin{equation}\label{square}
\square_L:= (-L,L)\times(0,T),
\end{equation}
for $\delta>0$  we define
\begin{equation}\label{squaredelta}
\square_\delta:=(-l,l)\times (t_1-2\delta, t_2+2\delta), 
\end{equation}
when it is necessary to emphasize the space size, we will write $\square_{\delta,l}$.\\

The $L^2_{\beta}-$weighted space, for $\bb >0$, is defined by
 $$ L^2_{\bb}=\left\{ u:(0,\infty)\to \R \quad \text{such that}\quad \int_0^\infty u^2(x)e^{-2\bb x}dx<\infty\right\}.$$
For $s\in\R$, $0\le b\le 1$ and any function $ w(x,t):\R^2  \to \R$ set 
		$$\nu_{s,b}^2(w):=\int_{-\infty}^{\infty} \int_{-\infty}^{\infty} <i(\tau + \xi^3)+ \xi^2>^{2b}<\xi>^{2s} |\widehat{\widehat{w}}(\xi,\tau)|^2d\xi d\tau \le \infty.$$
The spaces  $X_{s,b}$ be the completion of the space of all the functions 
	\begin{equation}\label{xsb}
\Big\{ w(x,t):\R^2 \to \R \, \quad \text{such that} \quad  \|w\|_{X_{s,b}}:= \nu_{s,b}(w) \le \infty \Big\}
	\end{equation}
 and  the space $\chi_{s,b}:= C(\R, \h^s(\R)) \cap X_{s,b}$ with the norm 
 $$\|w\|_{\chi_{s,b}}:= \left( \sup_{t\in\R} \|w(.,t)\|_{\h^s(\R)}^2 + \|w\|_{X_{s,b}}^2 \right)^{\frac12}.$$

Moreover,  for $s\in \mathbb R$, $0 \leq b \leq 1$ we consider the  auxiliary functional spaces $\mathcal{V}_{s, b}$, $\mathcal{Y}_{s, b}$, with the norm
$$
\|w\|_{\mathcal{V}_{s, b}}=\left(\|w\|_{\mathcal{X}_{s, b}}^2+\sup _{x \in \mathbb{R}}\left(\left\|w(x, \cdot)\right\|_{H_t^{\frac{s+1}{3}}( \mathbb R)}^2+ \left\|w_x(x, \cdot)\right\|_{H_t^{\frac{s}{3}}( \mathbb R)}^2\right)\right)^{\frac{1}{2}} .
$$

Let $\Omega\subset \mathbb{R}$ be an interval, the restriction norm  $\h^s(\Omega)$ is     
    $$\|f\|_{\h^s(\Omega)}=\inf_{\Omega} \|F\|_{\h^{s}(\R)}$$ 
and the  space $\chi_{s, b}(\mathbb R^{+} \times \Omega)$ is define as follows:
$$\chi_{s, b}\left( \Omega\times \mathbb R^{+}\right)=\chi_{s, b}\Big|_{\mathbb R^+\times \Omega},$$
with the norm
$$
\|u\|_{\chi_{s, b}\left(\mathbb R^+ \times \Omega\right)} := \inf _{\mathfrak{u} \in X_{s, b}}\left\{\|\mathfrak{u}\|_{X_{s, b}}: \mathfrak{u}(x, t)=u(x, t) \text { on } \mathbb{R}^{+} \times \Omega\right\} .
$$
For $T \geq 0$, the restricted Bourgain space on time $X^{s, b}_T$, endowed with the norm
$$
\|w\|_{X_{T}^{s,b}}=\inf_{\mathrm{w} \in {X}^{ s,b}}\left\{\|\mathrm{w}\|_{{X}^{ s,b}}, w(\mathrm{t})=\mathrm{w}(\mathrm{t}) \text { on }[0, T]\right\} .
$$

The notion of solution used in this paper follows the definition described by  Bona, Sun, and Zhang in \cite{BSZ04} 
\begin{definition}[Mild solution]
    For $s\in \mathbb R$ and $T>0$ given. $u(x,t)$ is a mild solution of \eqref{1.1} on $[0,T]$ if exists a sequence $u_n \in C([0,T];H^3(\R^+))\cap  C^1([0,T];L^2(\R^+))$ such that
    \begin{enumerate}
        \item $u_n$ solves \eqref{1.1} in $L^2(\R^+)$ with $t\in [0,T]$
        \item  $\lim_{n\to \infty} \|u_n-u\|_{C([0,T];H^s(\R^+))}=0$
        \item $\lim_{n\to \infty} \|u_n(x,0)-u_0(x)\|_{H^s(\R^+)}=0$, $\lim_{n\to \infty} \|u_n(0,t)-h(t)\|_{H^{\frac{s+1}{3}}([0,T])}=0$ \ and \ $\lim_{n\to \infty} \|\partial_x u_n(0,t)-g(t)\|_{H^{\frac{s}{3}}([0,T])}=0$.
    \end{enumerate}
\end{definition}
together with the compatibility conditions defined in \cite{BSZ04}. \\

The  main results of the paper are
\begin{itemize}
\item Local well-posedness of the KdV-B equation \eqref{1.2} in $H^s(\R^-)$ for $s>0$.
\begin{theorem}\label{wellposedness}

For $T>0$, consider $s\ge0$ such that  $s \neq 3m+\frac{1}{2}, \ m=0,1,2, \ldots$. , there exists a sufficiently small $r=r(s,T),$ such that if  
$$\|u_0\|_{\h^s\left(\mathbb R^{+}\right)}+\|h\|_{\h^{\frac{s+1}{3}}(0,T)}+\|g\|_{\h^{\frac{s}{3}}(0,T)}\le r,$$   
the IBVP \eqref{1.2} has a unique solution $u \in C\left([0, T] ; \h^s\left(\mathbb R^{+}\right)\right)$. Furthermore, the solution $u$ depends on Lipschitz continuously on $(u_0, h, g)$ in their respective spaces.
\end{theorem}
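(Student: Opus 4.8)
The plan is to follow the by-now-standard Bona–Sun–Zhang framework for initial–boundary value problems of KdV type, adapted to the half-line $\mathbb R^-$ with two boundary inputs $h,g$, and to exploit the extra dissipation coming from the $-u_{xx}$ term (which is what makes the Bourgain weight $\langle i(\tau+\xi^3)+\xi^2\rangle$ appear in the definition of $X_{s,b}$ and $\mathcal V_{s,b}$). First I would reduce the nonlinear IBVP \eqref{1.2} to a fixed-point problem: write the candidate solution as $u = W_{\mathbb R}(t)\tilde u_0 + W_{bdry}(t)(\tilde h,\tilde g) + \int_0^t W_{\mathbb R}(t-t')\big(\tfrac12(u^2)_x\big)(t')\,dt'$, where $W_{\mathbb R}$ is the free group/semigroup for $\partial_t + (-1)^a\partial_x^3 - \partial_x^2$ on the whole line (after extending data), $W_{bdry}$ is the boundary-forcing operator constructed via Laplace transform in $x$ (this is exactly the decomposition into "three different types of linear problems" announced in the introduction — the pure initial-value problem, the boundary-value problem with zero initial data, and the inhomogeneous Duhamel term). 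Each linear piece must be estimated in the space $\mathcal V_{s,b}$ (or its restriction to $\square$-type domains), which simultaneously controls the $C([0,T];H^s)$ norm, the $X_{s,b}$ norm, and the trace norms $\|u(x,\cdot)\|_{H_t^{(s+1)/3}}$ and $\|u_x(x,\cdot)\|_{H_t^{s/3}}$ uniformly in $x$; the condition $s\neq 3m+\tfrac12$ enters precisely in the boundary-operator estimates, where the fractional-order spaces $H_t^{(s+1)/3}$ hit a borderline Sobolev trace exponent and the Laplace-transform kernel develops a logarithmic singularity.

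The key steps, in order, are: (1) prove the linear estimates for the Cauchy problem — a Kato smoothing / Bourgain-space estimate showing $W_{\mathbb R}(t)\phi$ maps $H^s(\mathbb R)$ into $\mathcal V_{s,b}$ with $b$ slightly above $\tfrac12$, using the dissipative factor to absorb the usual loss; (2) prove the boundary-forcing estimates, analyzing the oscillatory/decaying integrals $\int e^{ix\xi}e^{t(i\xi^3-\xi^2)}\cdots$ coming from the roots of the symbol $i\xi^3 - \xi^2 = i\tau$ (three roots, with the half-line $x<0$ selecting those with appropriate decay), and showing boundedness from $H^{(s+1)/3}_t\times H^{s/3}_t$ into $\mathcal V_{s,b}$; (3) the Duhamel estimate for the nonlinearity, i.e. the bilinear estimate $\big\|\int_0^t W_{\mathbb R}(t-t')(uv)_x\,dt'\big\|_{\mathcal V_{s,b}} \lesssim \|u\|_{\mathcal V_{s,b'}}\|v\|_{\mathcal V_{s,b'}}$ for suitable $b'<\tfrac12<b$, proved by the standard $X_{s,b}$ dyadic/Schwarz-inequality argument on the convolution in frequency — here the Burgers-type dissipation again helps, giving room that the pure-KdV half-line argument at $s=0$ does not have; (4) assemble these into a contraction on a small ball of $\mathcal V_{s,b}$ of radius $\sim r$ on a time interval $[0,T]$, obtaining existence, uniqueness, and the Lipschitz dependence of $u$ on $(u_0,h,g)$ directly from the contraction estimates; (5) finally check the compatibility conditions and the equivalence of the $\mathcal V_{s,b}$-solution with the mild-solution notion of the Definition, by approximating $(u_0,h,g)$ by smooth compatible data and passing to the limit.

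I would expect the main obstacle to be step (2), the boundary-forcing estimate with two controls on $\mathbb R^-$: one must track which of the three characteristic roots of $i\xi^3-\xi^2=i\tau$ contribute on the negative half-line, build the correct representation of the solution of the pure boundary-value problem so that both traces $u(0,t)=h(t)$ and $u_x(0,t)=g(t)$ are achieved, and then prove that this operator gains the right amount of smoothness in $x$ and time simultaneously — the coupling of two boundary data (as opposed to one in the $\mathbb R^+$ case) forces a $2\times2$ linear-algebra inversion whose determinant must be shown not to vanish and to behave well as $|\xi|\to\infty$, and it is here that the exceptional values $s=3m+\tfrac12$ must be excluded. A secondary but real technical point is making the restriction spaces $\chi_{s,b}(\mathbb R^+\times\Omega)$ and $X^{s,b}_T$ behave well under the nonlinear map (extension/restriction lemmas, and the time-localization that loses a little in $b$), which is routine in spirit but must be done carefully so that the contraction constant can genuinely be made $<1$ for $r$ small.
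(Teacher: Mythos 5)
Your overall strategy is the same as the paper's: extend the data, split the solution into the whole-line group applied to the extended initial datum, a boundary-forcing operator built from the Laplace transform (with the boundary inputs corrected by the traces of the other two pieces), and a Duhamel term, then run a contraction in the space $\mathcal{V}_{s,b}$ that simultaneously controls $C([0,T];H^s)$, the Bourgain norm, and the two boundary traces. The Lipschitz dependence then falls out of the contraction estimates, exactly as you say.

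There is, however, one concrete misstep that would make your step (2) fail as written: you propose to take $b$ \emph{slightly above} $\tfrac12$ in the linear group estimate and to close the fixed point with $b'<\tfrac12<b$. The boundary-forcing operator $W_{bdr}(h,g)$ cannot be bounded into $X_{s,b}$ for $b\ge\tfrac12$; this is precisely the obstruction the paper flags, and its Proposition \ref{boundary operator} only yields \eqref{3.22}--\eqref{3.23} for $0<b<\tfrac1{18}$, with the Duhamel gain in Proposition \ref{Bourgain estimate prop} and the bilinear estimate in Proposition \ref{bilinearestimate} likewise stated for $0<b,b'<\tfrac12$. Working below $b=\tfrac12$ costs you the free embedding $X_{s,b}\hookrightarrow C_t H^s_x$, which is why the spaces $\chi_{s,b}$ and $\mathcal{V}_{s,b}$ carry the $\sup_t\|\cdot\|_{H^s}$ and trace norms explicitly; your contraction must be run in that augmented space with small $b$, not in a plain $X_{s,b}$ with large $b$. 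A secondary omission: the contraction argument only reaches $s\in[0,2-3b)$ directly (the forcing estimate of Proposition \ref{RlinearForcing} caps $s$ at $2-3b$), and the paper extends to all admissible $s\ge0$ by a persistence-of-regularity step (solving the equation satisfied by $u_t$ to place $u$ in $\mathcal{V}_{3,b}$) followed by nonlinear interpolation; your proposal does not address how to pass from the small-$s$ range to general $s$.
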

\item No controllability of the KdV-B equation for a $L^2-$bounded solutions.\\

\noindent
There is a lack of exact controllability for the linear KdV-B when the solutions have bounded energy, $u \in L^{\infty}(0,T;L^2(\Rm))$. In other words, the system \eqref{1.2} fails to be null controllable. As Rosier proved it in \cite[Theorem 1.2]{Rosierub} for the KdV equation,  the null-controllability cannot be reached if the solution is $L^2$ bounded.  The same effect still happens even if a damping term  $u_{xx}$ is added. Nevertheless, the proof is not straightforward because the operator does not have the same properties, it is not skew-adjoint or even adjoint.  
\begin{theorem}\label{nonullcontrol}
    Let $T>0$. Then there exists $u_0 \in L^2(\Rm)$ such that if $u\in L^{\infty}(0,T,L^2(\Rm))$ satisfying 
    \begin{equation}\label{nonull}
    \begin{cases}
        u_t - u_{xxx}-u_{xx}=0 & \text{in} \quad \D'((0,\infty) \times (0,T)),\\
        u(x,0)=u_0,
        \end{cases}
    \end{equation}
then $u(x,T)\neq 0$.
\end{theorem}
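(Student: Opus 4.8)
The plan is to argue by contradiction following the strategy of Rosier for the KdV equation, but tracking carefully the obstruction created by the non-skew-adjoint, non-self-adjoint operator $Au = u_{xxx}+u_{xx}$. Suppose that for \emph{every} $u_0 \in L^2(\R^-)$ the unique solution $u \in L^\infty(0,T;L^2(\R^-))$ of \eqref{nonull} satisfies $u(\cdot,T)=0$. Taking Laplace transform in $x$ (using $\mathcal{L}$ as defined above, adapted to the half-line $x<0$, equivalently working with $v(x,t)=u(-x,t)$ on $x>0$) and Fourier transform in $t$ extended by zero outside $(0,T)$, one converts the PDE into a family of ODEs in $\xi$ parametrised by the dual time variable $\tau$. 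The vanishing of $u(\cdot,0)$'s image and $u(\cdot,T)=0$ forces the transformed solution $\widehat{\widetilde u}(\xi,\tau)$ to be, for each fixed $\tau$, an entire function of $\xi$ of a controlled exponential type (Paley--Wiener, because $u$ has compact support in $t$ and lives on a half-line in $x$). First I would write down the characteristic polynomial $p(\xi) = \xi^3 - \xi^2 - i\tau$ coming from $\partial_t - \partial_x^3 - \partial_x^2$, and express the general bounded-in-$x$ solution as a combination of $e^{\lambda_j(\tau) x}$ over those roots $\lambda_j(\tau)$ with the correct sign of real part; the boundary data $h,g$ at $x=0$ together with the requirement that $u_0$ be free in $L^2$ must be shown to be incompatible with simultaneous vanishing at $t=0$ and $t=T$ unless $u\equiv 0$, which contradicts the freedom to choose $u_0\neq 0$.

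The key steps, in order: (i) reduce to the half-line $x>0$ and set up the double transform, establishing the ODE $\xi^3\widetilde u - \xi^2\widetilde u - (\text{boundary terms at }x=0) = i\tau \widetilde u$ after Fourier-in-$t$, with the boundary terms involving $\widehat h(\tau)$, $\widehat g(\tau)$ and $\widehat{u_{xx}(0,\cdot)}(\tau)$; (ii) solve this to get $\widehat{\widetilde u}(\xi,\tau) = N(\xi,\tau)/p(\xi,\tau)$ where $N$ is a low-degree polynomial in $\xi$ built from the unknown boundary traces and $p(\xi,\tau)=\xi^3-\xi^2-i\tau$; (iii) invoke that $u(\cdot,T)=0$ and $u(\cdot,0)=u_0$ with $u$ supported in $t\in[0,T]$, so by Paley--Wiener $\tau\mapsto \widehat{\cdot}(\tau)$ extends holomorphically to the whole complex plane with exponential bounds, and in particular $\widehat{\widetilde u}(\xi,\tau)$ must be \emph{entire} in $\xi$ for each $\tau$ — so $N(\xi,\tau)$ must vanish at the three roots of $p(\cdot,\tau)$; (iv) this gives a linear system relating the boundary traces to each other for all $\tau$; analyze it to conclude the traces vanish identically, hence $\widehat{\widetilde u}\equiv 0$, hence $u\equiv 0$, hence $u_0 = 0$ — contradicting that $u_0$ was an arbitrary nonzero $L^2(\R^-)$ datum. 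Then one concludes there must exist at least one $u_0 \in L^2(\R^-)$ for which the corresponding bounded solution has $u(\cdot,T)\neq 0$, which is the assertion.

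The main obstacle, and the place where Rosier's original argument must genuinely be modified, is step (iv) — the algebraic/analytic analysis of the transformed problem. For pure KdV the symbol is $i\xi^3$ and the roots of the characteristic equation sit at prescribed angles, so the Paley--Wiener growth rates of the three exponential branches are cleanly separated and one gets a rigid rank condition. Here the extra $-\xi^2$ term perturbs the roots of $\xi^3-\xi^2-i\tau$ in a $\tau$-dependent way, and for large $|\tau|$ two roots have real parts that are close, so the clean separation of exponential types used to force the coefficients to vanish degenerates; one must instead use the precise asymptotics of $\lambda_j(\tau)$ as $|\tau|\to\infty$ (a Puiseux/perturbative expansion of the roots) together with the compact $t$-support to pin down the decay of the holomorphic extensions and thereby still extract the vanishing of $N$. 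A second, smaller technical point is justifying that a generic $u_0\in L^2(\R^-)$ actually produces a solution that is regular enough near $x=0$ and $t=0,T$ for the traces $u(0,\cdot)$, $u_x(0,\cdot)$, $u_{xx}(0,\cdot)$ to make sense in the relevant $H^s_t$ spaces; this is where the well-posedness and trace theory developed in the earlier sections (and the spaces $\mathcal V_{s,b}$) are invoked, approximating $u_0$ by smooth compactly supported data and passing to the limit. Once the asymptotics of the roots are under control, the contradiction closes exactly as in the skew-adjoint case, and the damping term $u_{xx}$, far from helping, is precisely what obstructs a direct quotation of Rosier's theorem.
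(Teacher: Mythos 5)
Your proposal takes a genuinely different route from the paper, and as it stands it has gaps that I do not think can be repaired within the strategy you describe. The paper's proof is a duality/functional-analytic argument: it defines the Banach space $E$ of bounded-energy solutions of \eqref{nonull} that vanish at time $T$, uses the open mapping theorem to get a bounded inverse for the trace map $u\mapsto u(\cdot,0)$, and then pairs elements of $E$ against the explicit family $w^{\lambda}(x,t)=e^{-\lambda t}e^{ax}\sin(bx)$ of Lemma \ref{soln}, whose initial datum has unbounded $L^2(\Rm)$ norm while its first and second derivative traces at $x=0$ tend to $0$ as $\lambda\to 0$ (Lemma \ref{lemma inequality} controls the remaining boundary terms). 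The contradiction \eqref{contr} is quantitative: an unbounded quantity is dominated by one tending to zero.

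The first and most serious gap in your approach is that the hypothesis $u\in L^{\infty}(0,T;L^2(\Rm))$ is never used in an essential, quantitative way — you invoke it only qualitatively, to run Paley--Wiener. But Theorem \ref{control} shows that if one only asks for $L^2_{loc}$ solutions, the system \emph{is} exactly (in particular null) controllable. Any correct proof of Theorem \ref{nonullcontrol} must therefore exploit the bounded-energy assumption as the decisive ingredient; a purely algebraic rigidity of the transformed problem cannot be the mechanism, because that rigidity would persist for $L^2_{loc}$ solutions and contradict Theorem \ref{control}. Relatedly, if your step (iv) closed as written, you would conclude that \emph{no} nonzero $u_0$ can be steered to zero by a bounded solution — a far stronger statement than the theorem, which only asserts the existence of one bad datum, and one that the duality method deliberately avoids claiming.

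The second gap is the Paley--Wiener step itself. Since $u(\cdot,t)$ is supported on a half-line in $x$ and is merely $L^2$ there, its Laplace transform in $x$ is holomorphic only in a half-plane, not entire; compact support in $x$ is not available. Consequently $N(\xi,\tau)/p(\xi,\tau)$ need only be holomorphic where $\operatorname{Re}\xi$ has the appropriate sign, so only the roots of $p(\cdot,\tau)$ lying in that half-plane impose the condition $N(\lambda_j(\tau),\tau)=0$. For this operator those are exactly the two roots $r_1,r_2$ with $\operatorname{Re}r_j<0$ used in Section 3, and the resulting two conditions do not overdetermine the system: they simply express the solution in terms of the two free boundary traces, which is precisely why the IBVP is well posed with two boundary data and why approximate controllability (Proposition \ref{aproxcontrol}) holds. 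There is no contradiction to extract at that level. The Puiseux asymptotics of the roots, while correctly identified as delicate, do not address either of these structural problems.
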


\item Controllability of the KdV-B equation for a $L^2_{Loc}-$solutions
	\begin{theorem}\label{control}
   Let $T>0$, $\bb >0$  and $\epsilon \in (0,T/2)$.   For $u_0\in L^2(0,\infty)$ and $u_T\in L^2_{\bb}$, there exists a function 
   $$u\in  L^2_{loc}([0,\infty) \times [0,T] ) \cap C([0,\epsilon];L^2[0,\infty)) \cap C([T,T-\epsilon];L^2_{\beta}[0,\infty))$$
   which solves 
   \begin{equation}\label{exact}
    \begin{cases}
        u_t - u_{xxx}-u_{xx}=0, & \text{in} \quad \D'((0,\infty)\times (0,T)),\\
        u(x,0)=u_0,
        \end{cases}
    \end{equation}
    satisfying $u(x,T)=u_T.$
\end{theorem}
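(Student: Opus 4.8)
The plan is to split $[0,T]$ at $t=\epsilon$ and to build $u$ in two stages: a pure forward evolution on $[0,\epsilon]$ that keeps the solution in $L^2(0,\infty)$, followed by a duality/Carleman controllability step on $[\epsilon,T]$, carried out in exponentially weighted norms, which is allowed to let the solution grow at infinity on $[\epsilon,T-\epsilon]$ (so there it is only $L^2_{loc}$) but funnels it into $L^2_\bb$ as $t\to T$, matching the target $u_T\in L^2_\bb$.

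On $[0,\epsilon]$ I would use the linear estimates of Sections 3--4 (which underlie Theorem \ref{wellposedness} at $s=0$) to solve the IBVP
\[
\begin{cases}
u_t-u_{xxx}-u_{xx}=0, & x>0,\ 0<t<\epsilon,\\
u(x,0)=u_0(x), & x>0,\\
u(0,t)=0,\quad u_x(0,t)=0, & 0<t<\epsilon,
\end{cases}
\]
which gives $u\in C([0,\epsilon];L^2(0,\infty))$; in fact the dissipation $u_{xx}$ provides additional regularity. Since $L^2(0,\infty)\subset L^2_\bb$, the endpoint $\phi_0:=u(\cdot,\epsilon)$ lies in $L^2_\bb$, the first continuity requirement is already met, and it remains to steer $\phi_0$ at time $\epsilon$ to $u_T$ at time $T$ by an $L^2_{loc}$ solution that belongs to $C([T-\epsilon,T];L^2_\bb)$.

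For this reduced problem I would argue by duality. Testing $w_t-w_{xxx}-w_{xx}=0$ against a solution $\varphi$ of the adjoint equation $\varphi_t-\varphi_{xxx}+\varphi_{xx}=0$ and integrating by parts over $(0,\infty)\times(\epsilon,T)$ expresses $\langle w(\cdot,T),\varphi(\cdot,T)\rangle-\langle\phi_0,\varphi(\cdot,\epsilon)\rangle$ through the boundary traces $\varphi(0,\cdot)$, $\varphi_x(0,\cdot)$, so that exact controllability becomes an observability inequality of the type
\[
\|\varphi(\cdot,\epsilon)\|^2\ \lesssim\ \int_\epsilon^{T}\left(|\varphi(0,t)|^2+|\varphi_x(0,t)|^2\right)dt .
\]
This I would extract from the Carleman estimate of Section 5, applied on the boxes $\square_\delta$, carried up to $x=0$, and summed over a covering of $(0,\infty)$ with the weight $e^{-2\bb x}$ to make the sum converge; the care needed here is that, as the introduction stresses, the operator is neither skew-adjoint nor self-adjoint, and the adjoint carries a ``wrong-sign'' viscosity $+\varphi_{xx}$ that is harmless backward in time but must be accounted for in the Carleman weights. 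Since $(0,\infty)$ is unbounded, I would run this scheme first on truncations $(0,L)\times(\epsilon,T)$ with an auxiliary condition at $x=L$, obtaining controls $(h_L,g_L)$ and solutions $w_L$ with $e^{-\bb x}w_L$ bounded in $C([\epsilon,T];L^2)$ uniformly in $L$, and then pass to the limit $L\to\infty$ using the approximation theorem and the claims of the appendix, producing $w\in L^2_{loc}([0,\infty)\times[\epsilon,T])\cap C([T-\epsilon,T];L^2_\bb)$ with $w(\cdot,\epsilon)=\phi_0$ and $w(\cdot,T)=u_T$.

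It then remains to define $u$ as the first step's solution on $[0,\epsilon]$ and as $w$ on $[\epsilon,T]$, to observe that the glued function solves $u_t-u_{xxx}-u_{xx}=0$ in $\D'((0,\infty)\times(0,T))$ --- no compatibility at $x=0$ or at $t=\epsilon$ beyond matching of the $L^2$ traces is needed, because the equation is imposed only on the open set --- and to read off the three regularity statements. The main obstacle is precisely this unbounded-domain limit: there is no compactness on $(0,\infty)$ and the HUM functional attached to the observability inequality fails to be coercive on $L^2$, so one must coerce it on the weighted space $L^2_\bb$ instead --- which is exactly what forces the target into $L^2_\bb$ and the intermediate solution to be allowed to grow at infinity (the loophole left open by Theorem \ref{nonullcontrol}) --- and reconciling the uniform-in-$L$ weighted bounds coming out of Carleman with the $C([T-\epsilon,T];L^2_\bb)$ continuity is the delicate point.
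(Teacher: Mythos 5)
Your architecture is genuinely different from the paper's, and the difference is where the gap lies. The paper does not use HUM or any observability inequality with traces at $x=0$. It writes the solution as $\varphi(t)\nu_1+(1-\varphi(t))\nu_2+\omega$, where $\nu_1=S(t)u_0$, $\nu_2=S(T-t)u_T$ (the latter living in $L^2_{\bb}$, which is why the target is taken there), and $\omega$ solves the forced problem \eqref{auxw} with a right-hand side supported in a time strip. The entire controllability content is then Claim \ref{4.1R}: solve $Pu=f$ in $L^2_{loc}(\R^2)$ with support confined to $\R\times[t_1-\epsilon,t_2+\epsilon]$, by an iteration over expanding boxes $(-n,n)\times(0,T)$ in which Claim \ref{CalermanForcing} produces a compactly-time-supported solution on each box (via the Carleman estimate \eqref{Carleman} and Riesz representation on the completion of $\Upsilon$) and the Approximation Theorem \ref{appthem} (resting on the Ingham-based Proposition \ref{pobs}) corrects successive iterates so that they converge in $L^2_{loc}$. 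The Carleman estimate is thus used as an \emph{existence} tool for the inhomogeneous problem on functions vanishing to second order at $x=\pm L$, not as an observability tool.

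The gap in your proposal is the step you describe as extracting
$\|\varphi(\cdot,\epsilon)\|^2\lesssim\int_\epsilon^T\bigl(|\varphi(0,t)|^2+|\varphi_x(0,t)|^2\bigr)\,dt$
from Section 5. The estimate \eqref{Carleman} is proved only for $q\in\Upsilon$, i.e.\ for functions with $q=q_x=q_{xx}=0$ at $x=\pm L$; all lateral boundary terms are discarded by that hypothesis, so it carries no information about traces at $x=0$ and cannot be ``carried up to $x=0$'' without redoing the computation with a weight whose gradient has a controlled sign at that boundary --- a different estimate that the paper never establishes. Worse, even granting a boundary observability inequality on each truncation $(0,L)$, the constant must be made uniform as $L\to\infty$, and Theorem \ref{nonullcontrol} is precisely the statement that the unweighted version of this uniform inequality is false; you acknowledge this and propose to coerce the HUM functional on $L^2_{\bb}$ instead, but you give no argument that the weighted observability inequality holds, and it is not a corollary of anything in the paper. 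Your summation ``over a covering of $(0,\infty)$ with the weight $e^{-2\bb x}$'' also does not interact correctly with a Carleman weight of the form $\phi(x)/(t(T-t))$, which blows up at $t=\epsilon$ and $t=T$ and so cannot directly control $\|\varphi(\cdot,\epsilon)\|$. Finally, the appendix tools you invoke for the limit $L\to\infty$ (Lemma \ref{appthem}, Claim \ref{4.1R}) are built for zero-extension in $x$ and support control in $t$ within the paper's iterative scheme, not for passing a family of HUM controls to the limit; as written, the reduced problem on $[\epsilon,T]$ is not solved.
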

\end{itemize}

 \section{Analysis of the linear Korteweg de Vries-Burgers equation}
 The characterization of the solution for the KdV-B equation is established in this section, to find expressions in an appropriate form to obtain estimates that help us show the equation is well-posedness in the desired spaces.
\subsection{Solution formulas for linear problems}
	\subsubsection{\bf Initial value problem in $\mathbb R$}\label{inR}
	Let us consider the linear IVP for KdV-B equation on the full line,	
	\begin{equation}\label{IVP}
	    \begin{cases}
	        u_{t}-u_{xxx}-u_{xx}=0,\ &t>0, \ x\in \R,\\
			u(x,0)=u_0, \ &x\in \R. 
		\end{cases}
	\end{equation}
	
	The solution of \eqref{IVP}  for  $u_0 \in  \h^s(\R)$ with $s\in \R$ can be written as 
 \begin{align}
     \label{WR}
	    u(x,t)&:= W_{\R}(t)u_0(x) \notag\\
     &= \iF\left[e^{-( i \xi^3 + \xi^2)t}\,\F[u_0](\xi) \right] 
	    = \frac{1}{\sqrt{2\pi}} \int_{-\infty}^{\infty} e^{-( i \xi^3 + \xi^2)t}e^{-i \xi x} \int_{-\infty}^{\infty} e^{i y\xi} u_0(y)dy d\xi, 
 \end{align}
 
in \cite{BSZ08}[Proposition 2.4] Bona, Sun and Zhang have studied and concluded several estimates for the IVP \eqref{IVP} we recall the results that are required for our proof: \\
\begin{proposition}\label{RlinearID}
Let $s\in \R$, $0<b\le 1$ and $0 < \delta  < \frac{1}{2}$ be given. For $u_0 \in C^{\infty}_0(\R)$ (smooth function with compact support):
\begin{enumerate}
    \item  There exists a constant $C:=C(s,b,\varphi)>0$ such that
    \begin{gather}
    \| \varphi(t) \,W_{ \R}(t) u_0 \|_{\chi_{s,b}} \leq C \|u_0 \|_{\h^{s}(\R)}
    \end{gather}
    
    \item There exists a constant $C:=C(s)>0$ such that
    \begin{gather}
    \sup_{x\in \R} \| W_{ \R} u_0 \|_{\h_t^{\frac{s}{3}}(\R)} \le C \|u_0 \|_{\h^{s}(\R)}
    \end{gather}
	
	\item For all $f_n\in  X_{s, -\frac{1}{2}+\delta}$, the mapping 
	$$t \to \int_0^t W_{\R}(t-t') f_n(t')dt' $$ 
	belong to  ${C}(\mathbb R^+,\h^{s+2\delta}(\R))$.  Moreover,  if $f_n \to 0$ in $ X_{s, -\frac{1}{2}+\delta}$ then 
	$$\left\|  \int_0^t W_{\R}(t-t') f_n(t')dt'\right \|_{\mathbf L^{\infty}(\R^+, \h^{s+2\delta}(\R))} \to 0 \quad \text{as} \quad n\to \infty.$$
	\end{enumerate}
	\end{proposition}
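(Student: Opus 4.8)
The three statements are the standard linear building blocks of the Bourgain-space iteration for the dissipative--dispersive equation \eqref{IVP}, and the plan is to prove them by explicit computations on the space--time Fourier transform, in the spirit of Molinet--Ribaud \cite{MoRi} and Bona--Sun--Zhang \cite{BSZ08}; working with $u_0\in C_0^\infty(\R)$ all manipulations below are legitimate and the bounds pass to general data by density. The common starting point is
$$\iF_x\big[W_{\R}(t)u_0\big](\xi)=e^{-(i\xi^3+\xi^2)t}\,\widehat{u_0}(\xi),\qquad 0\le\big|e^{-(i\xi^3+\xi^2)t}\big|=e^{-t\xi^2}\le 1\ \ (t\ge 0),$$
so that the symbol of $\partial_t-\partial_x^3-\partial_x^2$ is $i(\tau+\xi^3)+\xi^2$ --- exactly the quantity weighted in $\nu_{s,b}$ --- and the factor $e^{-t\xi^2}$ furnishes parabolic smoothing at high frequencies for $t\ge0$; note also that $\langle i(\tau+\xi^3)+\xi^2\rangle\gtrsim\langle\xi\rangle^2$ everywhere.

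For (1), I would compute $\widehat{\widehat{\varphi(\cdot)W_{\R}(\cdot)u_0}}(\xi,\tau)=\widehat{u_0}(\xi)\,\mathcal G_\xi(\tau+\xi^3)$, with $\mathcal G_\xi(\sigma)=\int e^{-it\sigma}\varphi(t)e^{-t\xi^2}\,dt$, and split $|\xi|\lesssim1$ from $|\xi|\gtrsim1$. On low frequencies $\mathcal G_\xi$ inherits the rapid decay of $\widehat\varphi$ on horizontal strips, $|\mathcal G_\xi(\sigma)|\lesssim_N\langle\sigma\rangle^{-N}$ uniformly in $\xi$, and since there $\langle i(\tau+\xi^3)+\xi^2\rangle\sim\langle\tau+\xi^3\rangle$, the weighted double integral is bounded by $\|u_0\|_{\h^s}^2$; on high frequencies the factor $e^{-t\xi^2}$, with $t$ in the (compact, nonnegative) support of $\varphi$, produces the decay in $\xi$ needed to absorb the loss $\langle i(\tau+\xi^3)+\xi^2\rangle^{2b}$, $0<b\le1$. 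The $C(\R;\h^s(\R))$ part of the $\chi_{s,b}$ norm is immediate from $e^{-t\xi^2}\le1$ and dominated convergence. Statement (2) goes the same way: one bounds the time Fourier transform $\widehat{W_{\R}(\cdot)u_0(x)}(\tau)$ by Cauchy--Schwarz with the identical split; on $|\xi|\lesssim1$ the change of variables $\tau=\xi^3$ (so $d\tau\sim\xi^2\,d\xi$, $\langle\tau\rangle^{s/3}\sim\langle\xi\rangle^s$) reproduces the classical Kato-type trace identity $\sup_x\|W_{\R}(\cdot)u_0(x)\|_{\h_t^{s/3}(\R)}\lesssim\|u_0\|_{\h^s}$ for the Airy group, while on $|\xi|\gtrsim1$ the dissipation $e^{-t\xi^2}$ makes the $t$-integral converge with the required power of $\langle\xi\rangle$, uniformly in $x$.

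For (3), the key identity is that, for $t\ge0$,
$$\iF_x\Big[\int_0^t W_{\R}(t-t')f(t')\,dt'\Big](\xi)=\int_{\R}\frac{e^{it\tau}-e^{-(i\xi^3+\xi^2)t}}{i(\tau+\xi^3)+\xi^2}\,\widehat{\widehat f}(\xi,\tau)\,d\tau,$$
whose numerator is $\le2$ uniformly in $t\ge0,\xi,\tau$, while dividing by $|i(\tau+\xi^3)+\xi^2|=((\tau+\xi^3)^2+\xi^4)^{1/2}$ gains two $x$-derivatives on the resonant set $|\tau+\xi^3|\lesssim\xi^2$ (where this modulus is $\gtrsim\langle\xi\rangle^2$) and powers of $\langle\tau+\xi^3\rangle$ off it. Pairing this multiplier bound, via a single Cauchy--Schwarz in $\tau$, with the weight $\langle i(\tau+\xi^3)+\xi^2\rangle^{-1/2+\delta}$ that defines $X_{s,-1/2+\delta}$ --- using $0<\delta<\tfrac12$ so that $\int_{\R}\langle i(\tau+\xi^3)+\xi^2\rangle^{1-2\delta}\,|i(\tau+\xi^3)+\xi^2|^{-2}\,d\tau\lesssim\langle\xi\rangle^{-4\delta}$ on $|\xi|\gtrsim1$ --- yields, on any finite time interval, $\sup_{t}\big\|\int_0^tW_{\R}(t-t')f(t')\,dt'\big\|_{\h^{s+2\delta}(\R)}\lesssim\|f\|_{X_{s,-1/2+\delta}}$. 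Continuity in $t$ of $t\mapsto\int_0^tW_{\R}(t-t')f(t')\,dt'$ with values in $\h^{s+2\delta}(\R)$ follows from dominated convergence in the displayed integral, and the convergence assertion ``$f_n\to0$ in $X_{s,-1/2+\delta}\Rightarrow\int_0^tW_{\R}(t-t')f_n(t')dt'\to0$'' is just this bound applied to the sequence $f_n$.

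The point where I expect real care to be needed is threefold. First, $W_{\R}(t)$ is a semigroup only for $t\ge0$, so in the $\chi_{s,b}$ estimate the cutoff $\varphi$ must be taken supported in $\{t\ge0\}$ (or the flow extended to $t<0$ via $e^{-|t|\xi^2}$ in the usual way). Second, the genuine high-frequency control in (1)--(2) cannot come from the purely dispersive change of variables $\tau\sim\xi^3$ alone but must be extracted from the parabolic factor $e^{-t\xi^2}$, and tracking how this factor couples to the weight $\langle i(\tau+\xi^3)+\xi^2\rangle^{b}$ is the delicate bookkeeping. Third, in (3) the naive estimate $|m|\lesssim|i(\tau+\xi^3)+\xi^2|^{-1}$ is not integrable near $\xi=0$; one must instead use $|m|\lesssim\min\big(|i(\tau+\xi^3)+\xi^2|^{-1},\,t\big)$, which keeps the low-frequency part finite on bounded time intervals --- this is exactly why the statement is read on a finite time horizon and why the constants there are time-dependent.
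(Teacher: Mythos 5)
Your proposal is correct and follows the same route as the paper: the paper's entire proof of this proposition is the observation that the statements are Proposition~2.4 of Bona--Sun--Zhang \cite{BSZ08} for the symbol $i\xi^3t-\xi^2t$, and that the sign change in the cubic term of \eqref{IVP} is absorbed by the substitution $\xi\mapsto-\xi$. Your writeup is essentially a self-contained reconstruction of the computations that the cited reference performs (the factorization $\widehat{u_0}(\xi)\,\mathcal G_\xi(\tau+\xi^3)$, the low/high frequency split exploiting $e^{-t\xi^2}$, and the Duhamel multiplier bound with the $\min\bigl(|i(\tau+\xi^3)+\xi^2|^{-1},t\bigr)$ correction near $\xi=0$), and the caveats you flag — the sign of $t$ on the support of $\varphi$, the coupling of the dissipative factor to the weight, and the low-modulation region in part (3) — are exactly the delicate points of that argument.
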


	\begin{proposition}\label{RlinearIDRegularity}{\cite[propositions 2.5]{BSZ08}}
	Let $-1 \le s \le 5$.  There exists a constant $C:=C(s)$ such that 
\begin{gather}
    \sup \limits_{x\in \mathbb R} \|W_{\R}(t) u_0\|_{H_t^{\frac{s+1}{3}(\R)}}\le C\|u_0 \|_{H^s(\R)},\\
    \sup  \limits_{x\in \mathbb R} \|\partial_x W_{\R}(t) u_0\|_{H_t^{\frac{s}{3}(\R)}}\le C\| u_0\|_{H^s(\R)},
\end{gather}
for any $u_0 \in H^s(\R)$.
	\end{proposition}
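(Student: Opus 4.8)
The plan is to deduce both inequalities from a single estimate of local-smoothing (Kato) type: for $\gamma\in\{0,1\}$,
$$\sup_{x\in\R}\big\|\partial_x^{\gamma}W_{\R}(t)u_0\big\|_{H_t^{\frac{s+1-\gamma}{3}}}\ \lesssim\ \|u_0\|_{H^s(\R)},$$
whose cases $\gamma=0$ and $\gamma=1$ are precisely the two claimed bounds. In the range where both make sense, the second also follows from the first by feeding $\partial_x u_0\in H^{s-1}(\R)$ into it, since $\tfrac{(s-1)+1}{3}=\tfrac{s}{3}$ and $\|\partial_x u_0\|_{H^{s-1}}\le\|u_0\|_{H^s}$; but it is cleaner to prove the statement once, carrying the harmless factor $(i\xi)^{\gamma}$. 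Note that this is a genuine gain of $\tfrac13$ derivative over the crude trace bound of Proposition~\ref{RlinearID}(2), so the dispersion --- not merely the boundedness of the symbol --- must be used.

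First I would pass to a time-localized statement. Since the $\xi\approx0$ modes of $e^{-(i\xi^3+\xi^2)t}$ do not decay as $t\to\infty$, the $H_t$-norm over $t\in\R$ is finite only after inserting a cutoff $\psi\in C_c^\infty([0,\infty))$ with $\psi\equiv1$ on $[0,T]$, and it then suffices to bound $\|\psi(t)\,\partial_x^{\gamma}W_{\R}(t)u_0(x)\|_{H_t^{(s+1-\gamma)/3}}$ uniformly in $x$. Split $u_0=P_{\le1}u_0+P_{>1}u_0$ according to $|\xi|\le1$ and $|\xi|>1$. The low-frequency piece is immediate: $\partial_t^{k}\big(\psi\,\partial_x^{\gamma}W_{\R}P_{\le1}u_0\big)$ is a finite combination of terms $\psi^{(j)}(t)\,\partial_x^{\ell}W_{\R}(t)P_{\le1}u_0$ with $\ell\le 3k+\gamma$, each supported in $|\xi|\le1$ and hence with $L^\infty_xL^2_t$-norm $\lesssim_{k,T}\|P_{\le1}u_0\|_{L^2}\lesssim_s\|u_0\|_{H^s}$; iterating controls every $H_t^a$-norm of this piece.

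The substance is the high-frequency piece, which I would treat by the change of variables $\mu=\xi^3$ (so $d\xi=\tfrac13|\mu|^{-2/3}\,d\mu$ and $|\xi|>1\Leftrightarrow|\mu|>1$). For fixed $x$, the $t$-Fourier transform of $\psi(t)\,\partial_x^{\gamma}W_{\R}(t)(P_{>1}u_0)(x)$ then takes the form
$$\frac{c}{3}\int_{|\mu|>1}(i\mu^{1/3})^{\gamma}\,\omega(x,\mu)\,\widehat{u_0}(\mu^{1/3})\,|\mu|^{-2/3}\,m_{\mu^{1/3}}(\tau+\mu)\,d\mu ,\qquad |\omega(x,\mu)|\equiv1,$$
where $m_\xi:=\mathcal{F}_t[\psi(t)e^{-\xi^2 t}]$ encodes the dissipation and is, uniformly in $\xi$, concentrated on $|\zeta|\lesssim\langle\xi\rangle^2$ with rapid decay once $|\zeta|\gg\langle\xi\rangle^2$, and satisfies $\|m_\xi\|_{L^1_\tau}+\|m_\xi\|_{L^2_\tau}\lesssim\langle\xi\rangle$. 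Because $\langle\tau\rangle\sim\langle\xi^3\rangle$ on the band $|\tau+\xi^3|\lesssim\langle\xi\rangle^2$ carrying the bulk of $m_{\xi}$, Plancherel in $t$ followed by a Schur/Cauchy--Schwarz estimate (with the off-diagonal part summed using the decay of $m_\xi$) reduces the $H_t^{(s+1-\gamma)/3}$-norm of this piece to $\big(\int_{|\xi|>1}\langle\xi^3\rangle^{2a}\,|\xi|^{2\gamma}\,|\widehat{u_0}(\xi)|^2\,|\xi|^{-2}\,d\xi\big)^{1/2}$ with $a=\tfrac{s+1-\gamma}{3}$, and the supremum in $x$ costs nothing because $|\omega|\equiv1$. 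Since $\langle\xi^3\rangle^{2a}|\xi|^{2\gamma}|\xi|^{-2}\lesssim\langle\xi\rangle^{2s}$ on $|\xi|>1$ for exactly this $a$ --- the Jacobian factor $|\xi|^{-2}$ being precisely the source of the $\tfrac13$-derivative gain --- the desired bound follows.

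I expect the off-diagonal terms to be the main obstacle. For the purely dispersive group the substitution $\mu=\xi^3$ is an isometry and the estimate is a one-liner; with the dissipative term present one has to verify that the tails of the mollified kernel $m_\xi$ contribute a convergent amount, and it is the convergence of the attendant $\xi$- and $\tau$-integrals that pins down the admissible range $-1\le s\le5$ --- the lower bound from the behaviour of the tails as $\xi,\tau\to\infty$, the upper bound from balancing the weight $\langle\tau\rangle^{2a}$ against the kernel there. An alternative would be to deduce the estimate from the $X_{s,b}$-bound of Proposition~\ref{RlinearID}(1) via an embedding $X_{s,b}\hookrightarrow L^\infty_xH_t^{(s+1)/3}(\R)$ for suitable $b$, but that merely relocates the same difficulty. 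Everything else --- the time cutoff, the low-frequency piece, and carrying $(i\xi)^{\gamma}$ throughout --- is routine.
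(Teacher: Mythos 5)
Your outline is sound, but be aware that the paper does not actually prove Proposition \ref{RlinearIDRegularity}: its entire ``proof'' is the one-line remark that the statement is \cite[Prop.~2.5]{BSZ08} up to the reflection $\xi\mapsto-\xi$ that exchanges the symbols $-i\xi^3-\xi^2$ and $i\xi^3-\xi^2$. What you have written is in effect a reconstruction of the argument in that reference, and it follows the same route: time localization, a low/high frequency splitting, the substitution $\mu=\xi^3$ whose Jacobian $|\mu|^{-2/3}$ is the source of the $\tfrac13$-derivative (sharp Kato smoothing) gain, and the treatment of the dissipative factor as a kernel $m_\xi(\tau+\xi^3)$ supported near the dispersion curve; your exponent bookkeeping $\langle\xi^3\rangle^{2a}|\xi|^{2\gamma}|\xi|^{-2}\sim\langle\xi\rangle^{2s}$ for $a=\tfrac{s+1-\gamma}{3}$ is exactly right, as is the decision to carry $(i\xi)^\gamma$ rather than deduce the Neumann-trace bound from the Dirichlet one (which would need $s-1\ge-1$ and so would miss the bottom of the range).

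The one place where the sketch is genuinely optimistic is the asserted behaviour of $m_\xi$. Since $W_{\R}(t)$ is only a semigroup, the localized profile is $\chi_{\{t\ge0\}}\psi(t)e^{-\xi^2t}$, and the jump of size $1$ at $t=0$ forces $m_\xi(\zeta)=\frac{\psi(0)}{\xi^2+i\zeta}+O(|\zeta|^{-2})$ at infinity; this decays only like $|\zeta|^{-1}$, is \emph{not} in $L^1_\zeta$ (the integral diverges logarithmically), and has no rapid decay for $|\zeta|\gg\langle\xi\rangle^2$. So the Schur/Young step cannot be run with the stated bounds $\|m_\xi\|_{L^1}\lesssim\langle\xi\rangle$ and ``rapid decay off the band''; one must either choose a genuine $H_t^{(s+1)/3}(\R^+)$ extension adapted to the one-sidedness, or peel off the explicit $\frac{1}{\xi^2+i\zeta}$ term and estimate it separately --- this is precisely the part of the computation that Bona--Sun--Zhang carry out in detail and that produces the restriction $-1\le s\le5$, as you anticipate. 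You correctly identified this as the crux, but as written the claimed properties of $m_\xi$ are false, so that step still has to be done.
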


\begin{proposition}\label{RlinearForcing}{\cite[propositions 2.7]{BSZ08}}
For $-1\le s\le 2-3b$ with $0\le b < \frac{1}{2}$ and     $ f\in \mathbf{C}_0^{\infty}(\R)$, considering 
$$w(x,t)= \int_0^t W_{\R}(t-t')f(.,t')dt',$$
then, there exists $C:= C(b,s,\varphi)$ such that  
\begin{gather}
    \sup \| \varphi(.) w(x,.) \|_{H_t^{\frac{s+1}{3}}(\R)} \le C \| f\|_{X_{s,-b}},\\
    \sup \| \varphi(.) w_x(x,.) \|_{H_t^{\frac{s}{3}}(\R)} \le C \| f\|_{X_{s,-b}}.
\end{gather}

\end{proposition}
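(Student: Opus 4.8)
The plan is to pass to the space--time Fourier side, make the Duhamel kernel explicit, and then split it into a homogeneous evolution (to which Proposition~\ref{RlinearIDRegularity} applies verbatim) and a genuine forcing remainder, which is the only part that must be estimated by hand. Writing $\widehat{\widehat{f}}(\xi,\tau)$ for the space--time Fourier transform of $f$ and carrying out the $t'$--integration in $w(x,t)=\int_0^t W_{\R}(t-t')f(\cdot,t')\,dt'$ gives
\begin{equation*}
w(x,t)=c\iint \frac{e^{it\tau}-e^{-(i\xi^3+\xi^2)t}}{\mu}\,\widehat{\widehat{f}}(\xi,\tau)\,e^{i\xi x}\,d\tau\,d\xi,\qquad \mu:=i(\tau+\xi^3)+\xi^2 .
\end{equation*}
The denominator is exactly the modulation weight of $X_{s,b}$, and the numerator vanishes as $\mu\to0$, so the kernel itself is harmless; the difficulty is created only when one separates the two exponentials, which reintroduces a spurious singularity at $\mu=0$. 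I would therefore fix a smooth cutoff $\chi(\mu)$ with $\chi\equiv1$ on $\{|\mu|\le1\}$ and $\supp\chi\subset\{|\mu|\le2\}$, and treat the low--modulation part $\chi$ and the high--modulation part $1-\chi$ separately.

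On the support of $\chi$ the frequency $\xi$ is confined to $|\xi|\le\sqrt2$, so using $e^{it\tau}-e^{-(i\xi^3+\xi^2)t}=e^{-(i\xi^3+\xi^2)t}(e^{\mu t}-1)$ I would expand $e^{\mu t}-1=\sum_{k\ge1}\mu^k t^k/k!$ to cancel the $1/\mu$ and write this contribution as $\sum_{k\ge1}\frac{t^k}{k!}W_{\R}(t)g_k$ with $\widehat{g_k}(\xi)=c\int \chi\,\mu^{k-1}\widehat{\widehat{f}}\,d\tau$. A Cauchy--Schwarz estimate in $\tau$ against the weight $\langle\mu\rangle^{-b}$ (no division by $\mu$ now, and $\langle\xi\rangle^{s}$ bounded on the compact support) yields $\|g_k\|_{H^s}\lesssim 2^k\|f\|_{X_{s,-b}}$; applying Proposition~\ref{RlinearIDRegularity} to each $\varphi(t)\tfrac{t^k}{k!}W_{\R}(t)g_k$ and using that multiplication by the Schwartz function $\varphi t^k/k!$ is bounded on $H_t^{\frac{s+1}{3}}$ with norm $\lesssim k^N R^k/k!$ (where $R$ bounds $\supp\varphi$), the series converges because the factorial beats $(2R)^k$. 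The homogeneous high--modulation piece is $-W_{\R}(t)g^{\mathrm{high}}$ with $\widehat{g^{\mathrm{high}}}(\xi)=c\int\frac{(1-\chi)}{\mu}\widehat{\widehat{f}}\,d\tau$; here Cauchy--Schwarz requires $\int_{|\mu|\ge1}\langle\mu\rangle^{2b}|\mu|^{-2}\,d\tau<\infty$ uniformly in $\xi$, which holds \emph{precisely because $b<1/2$} (the large--$\tau$ tail decays like $|\tau|^{2b-2}$) and whose uniformity in $\xi$ is guaranteed by the restriction $|\mu|\ge1$ that removes the $\xi\to0$ divergence. Proposition~\ref{RlinearIDRegularity} then bounds this piece by $\|g^{\mathrm{high}}\|_{H^s}\lesssim\|f\|_{X_{s,-b}}$.

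The remaining forcing term $F$, with $\widehat{\widehat{F}}=\frac{1-\chi}{\mu}\widehat{\widehat{f}}$, is the crux. Since $|\mu|\ge1$ on its support one has $\frac{1-\chi}{|\mu|}\langle\mu\rangle\lesssim1$, hence $\|F\|_{X_{s,1-b}}\lesssim\|f\|_{X_{s,-b}}$, and $1-b>1/2$. What is needed is the sharp trace embedding $\sup_x\|\varphi F(x,\cdot)\|_{H_t^{\frac{s+1}{3}}}\lesssim\|F\|_{X_{s,1-b}}$. The main obstacle is that the usual transference principle, which turns such an $X_{s,b'}$ bound into a trace bound, is stated for unitary groups, whereas $W_{\R}$ is only a dissipative semigroup: the pure--oscillation representation of $F$ cannot be routed through $W_{\R}$ without producing a spurious growing factor $e^{t\xi^2}$. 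I would resolve this by transferring instead against the unitary Airy group $V(t)$ defined by $\widehat{V(t)u_0}(\xi)=e^{-i\xi^3 t}\widehat{u_0}(\xi)$ (equivalently factoring $W_{\R}(t)=V(t)e^{t\partial_x^2}$), for which the classical Kenig--Ponce--Vega trace estimate $\sup_x\|\varphi V(t)u_0\|_{H_t^{\frac{s+1}{3}}}\lesssim\|u_0\|_{H^s}$ holds on $-1\le s\le5$; since $\langle\mu\rangle\gtrsim\langle\tau+\xi^3\rangle$, the $X_{s,1-b}$ norm dominates the Airy--modulation norm and the transference closes. It is exactly here that the hypothesis $s\le2-3b$ enters, being equivalent to $\frac{s+1}{3}\le1-b$, i.e. the demanded temporal order must not exceed the modulation regularity $b'=1-b$ gained from the Duhamel smoothing, the endpoint $s=2-3b$ being the critical case $\alpha=b'$.

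The estimate for $w_x$ is identical in structure: replace the first inequality of Proposition~\ref{RlinearIDRegularity} by its second ($\partial_x W_{\R}$) and lower the demanded temporal order from $\frac{s+1}{3}$ to $\frac{s}{3}$; the balance condition becomes $\frac{s}{3}\le1-b$, which is weaker than $s\le2-3b$, so the same range closes, while the lower bound $s\ge-1$ is inherited from the validity range of the trace estimates. I expect the non--unitary transference of the forcing remainder (third paragraph) to be the genuinely delicate point, all other contributions reducing to Proposition~\ref{RlinearIDRegularity} together with the elementary $b<1/2$ integrations.
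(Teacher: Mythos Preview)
The paper does not prove this proposition; it simply invokes \cite[Proposition~2.7]{BSZ08} and observes that the only difference from that setting is the sign of the cubic part of the symbol, which is removed by the change of variable $\xi\mapsto-\xi$. Your sketch therefore reconstructs the underlying \cite{BSZ08} argument, and the main steps---the Fourier representation of the Duhamel kernel with denominator $\mu=i(\tau+\xi^3)+\xi^2$, the low/high modulation split via $\chi(\mu)$, the Taylor expansion of $e^{\mu t}-1$ on $\{|\mu|\le1\}$ reducing matters to Proposition~\ref{RlinearIDRegularity}, the Cauchy--Schwarz treatment of the high-modulation homogeneous piece $g^{\mathrm{high}}$ (where $b<\tfrac12$ is exactly what makes $\int_{|\mu|\ge1}|\mu|^{2b-2}\,d\tau$ converge uniformly in $\xi$), and a trace bound on the remaining forcing piece $F$---are the standard ingredients and are correctly outlined.

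One point deserves a correction: your explanation of where the hypothesis $s\le 2-3b$ enters is off. In the Airy-transference step for $F$ the only requirement is $1-b>\tfrac12$; the Cauchy--Schwarz in the modulation variable $\lambda$ does not see the temporal order $\alpha=\tfrac{s+1}{3}$ at all, so no relation $\alpha\le 1-b$ is forced there. In \cite{BSZ08} the upper bound on $s$ arises instead from a \emph{direct} estimate---Cauchy--Schwarz in $\xi$ after taking the time Fourier transform of $\varphi w(x,\cdot)$---rather than from a transference argument. Your route through $\|F\|_{X_{s,1-b}}$ and Airy transference would in fact deliver the conclusion on the wider range inherited from the free trace bound of Proposition~\ref{RlinearIDRegularity}. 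This is not a gap in your proof, only an over-explanation of a hypothesis your argument does not actually use.
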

	
	{\bf Proof Propositions \eqref{RlinearID}, \eqref{RlinearIDRegularity}  and \eqref{RlinearForcing}}
	All the proofs follow the results on \cite{BSZ08}, the difference is in the symbol of the kdV-B equation \eqref{IVP} is $-i \xi^3 t -  \xi^2 t $ and the symbol of de KdV in \cite{BSZ08} is $i \xi^3 t - \xi^2 t $ when $\rho=0$. The change of sign in the cubic term can be addressed by a simple change $\xi \to -\xi$ when $\xi \in \R$ will be sufficient for the adaptation in the proof. $\square$
	
	\begin{remark}
	 For $u_0 \in H^s(\R)$ with $s\in \R$.  If $t\in [0,T]$ for any $a\in \R^+$ there is a hidden regularity given by 
	  \begin{gather}\| u\|_{ C(\R^+,H^{s+\frac{a}{2}}(\R))} \le C(t) \|u_0\|_{H^s(\R)} \end{gather}
	 with $C(t)>0$ for $t\ge a$. 	 
	 However, if $t\in [0,\infty)$ an extra regularity is not reached, therefore 
	 \begin{gather}
	     \| u\|_{C(\R^+,H^s(\R))} \le C \|u_0\|_{H^s(\R)}.
	 \end{gather}
	\end{remark}

 It is well-known, that the $X^{s,b}$ spaces with $b > \frac{1}{2}$
are well-suited for studying the IVP of dispersive equations. Nonetheless, for this IBVP the standard argument cannot be applied directly because the boundary operator requires $b\in \left(0, \frac{1}{2}\right)$. Therefore, the estimates in Bourgain spaces for $b< \frac{1}{2}$ are required.

\begin{proposition} \label{Bourgain estimate prop}
Let $s\in \R$ and $0<b, \ b'<
 \frac{1}{2}$ be given. For $\varphi \in C^{\infty}_0(\R)$ 
    \begin{equation}
   \left\|\psi(t) \int_0^t W_{\mathbb R}\left(t-\tau^{\prime}\right) w\left(\tau^{\prime}\right) d \tau^{\prime}\right\|_{X_{s,b}} \leq C\|w\|_{X_{s,-b'}}. \label{C1}
\end{equation}
\end{proposition}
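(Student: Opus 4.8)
The plan is to reduce the estimate for the full Duhamel operator $\psi(t)\int_0^t W_{\R}(t-\tau')w(\tau')\,d\tau'$ to a one‑variable Fourier multiplier bound in the time variable, after first removing the spatial symbol by writing everything in terms of the resolvent symbol $\sigma(\xi)=i\xi^3+\xi^2$. Taking the spatial Fourier transform, the operator acts fiberwise as
\begin{equation*}
\psi(t)\int_0^t e^{-(t-\tau')\sigma(\xi)}\,\widehat{w}(\xi,\tau')\,d\tau',
\end{equation*}
so by Plancherel in $\xi$ and the definition of $\nu_{s,b}$ it suffices to prove, uniformly in the parameter $\sigma=\sigma(\xi)$ (in fact uniformly over all $\sigma$ with $\mathrm{Re}\,\sigma\ge 0$, which is all we ever use), the scalar estimate
\begin{equation*}
\left\|\,\langle i\tau+\sigma\rangle^{\,b}\,\mathcal F_t\!\Big[\psi(t)\!\int_0^t e^{-(t-\tau')\sigma}f(\tau')\,d\tau'\Big](\tau)\,\right\|_{L^2_\tau}
\ \lesssim\ \big\|\langle i\tau+\sigma\rangle^{-b'} \widehat f(\tau)\big\|_{L^2_\tau}.
\end{equation*}
Here I am using that $\langle i(\tau+\xi^3)+\xi^2\rangle = \langle i\tau + \sigma(\xi)\rangle$ after the harmless shift $\tau\mapsto\tau-\xi^3$ which commutes with the time convolution. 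So the whole proposition is a statement about a single convolution operator in $t$ with a parameter, and the $\langle\xi\rangle^{2s}$ weight just rides along untouched.

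The second step is the standard decomposition of the truncated Duhamel integral. Write $\int_0^t = \int_0^\infty - \int_t^\infty$ is not quite the right split here; instead use the classical device (as in Kenig–Ponce–Vega / Molinet–Ribaud): expand $e^{-(t-\tau')\sigma}=e^{-t\sigma}e^{\tau'\sigma}$ is not globally integrable, so one rather writes, with $\phi=\psi$ a smooth cutoff and after replacing $f$ by $\psi f$ which is legitimate up to constants,
\begin{equation*}
\psi(t)\int_0^t e^{-(t-\tau')\sigma}f(\tau')\,d\tau'
= \psi(t)\int_{\R} \frac{e^{it\tau_1}-e^{-t\sigma}}{i\tau_1+\sigma}\,\widehat f(\tau_1)\,d\tau_1,
\end{equation*}
obtained by inserting $\widehat f$ and doing the $\tau'$‑integral. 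Split this into the ``resonant'' region $\{|i\tau_1+\sigma|\le 1\}$, where one Taylor‑expands $e^{it\tau_1}-e^{-t\sigma} = \sum_{k\ge 1}\frac{t^k}{k!}\big((i\tau_1)^k-(-\sigma)^k\big)$ to cancel the small denominator and then uses that $\psi(t)t^k$ is Schwartz, and the ``non‑resonant'' region $\{|i\tau_1+\sigma|>1\}$, where the two terms $\psi(t)e^{it\tau_1}$ and $\psi(t)e^{-t\sigma}$ are estimated separately: the first contributes $\widehat\psi(\tau-\tau_1)$ convolved against $\langle i\tau_1+\sigma\rangle^{-1}\widehat f(\tau_1)$, the second contributes the $\tau$‑independent-in-oscillation factor $\widehat{\psi e_\sigma}(\tau)$ times $\int \langle i\tau_1+\sigma\rangle^{-1}\widehat f\,d\tau_1$. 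One then applies the weight $\langle i\tau+\sigma\rangle^{b}$ and controls the resulting kernels using that $\widehat\psi$ decays rapidly together with the elementary inequality $\langle i\tau+\sigma\rangle^{b}\lesssim \langle\tau-\tau_1\rangle^{b}\langle i\tau_1+\sigma\rangle^{b}$; a Schur test / Young's inequality in $\tau$ finishes each piece, and the loss $\langle i\tau_1+\sigma\rangle^{\,b-1}$ is absorbed into $\langle i\tau_1+\sigma\rangle^{-b'}$ precisely because $b+b'<1$ forces $b-1<-b'$.

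The main obstacle is the non‑resonant piece coming from the $e^{-t\sigma}$ term: because $\mathrm{Re}\,\sigma=\xi^2$ can be large, $\psi(t)e^{-t\sigma}$ is a genuine parabolic‑decay contribution whose time Fourier transform is not simply a translate of $\widehat\psi$, and one must check that applying $\langle i\tau+\sigma\rangle^{b}$ to it is still controlled — the point being that $\|\langle i\tau+\sigma\rangle^{b}\,\mathcal F_t[\psi(\cdot)e^{-(\cdot)\sigma}](\tau)\|_{L^2_\tau}\lesssim 1$ uniformly in $\sigma$ with $\mathrm{Re}\,\sigma\ge0$, which one verifies by splitting $|\tau|\lesssim|\sigma|$ versus $|\tau|\gtrsim|\sigma|$ and integrating by parts in $t$ to trade powers of $\tau$ for derivatives hitting $\psi$. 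Once that uniform bound is in hand, the remaining quantity $\big|\int\langle i\tau_1+\sigma\rangle^{-1}\widehat f(\tau_1)\,d\tau_1\big|\le \|\langle i\tau_1+\sigma\rangle^{-1+b'}\|_{L^2}\,\|\langle i\tau_1+\sigma\rangle^{-b'}\widehat f\|_{L^2}$, and $\langle i\tau_1+\sigma\rangle^{-1+b'}\in L^2_{\tau_1}$ uniformly in $\sigma$ exactly when $2(1-b')>1$, i.e. $b'<\tfrac12$, which is our hypothesis. Assembling the three regions gives \eqref{C1}; as with the earlier propositions, the sign of the cubic term is irrelevant (it only affects the imaginary part of $\sigma$, which plays no role in these bounds), so no separate argument is needed for the KdV‑vs‑KdV‑B symbol discrepancy.
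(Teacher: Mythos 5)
First, the paper does not actually prove this proposition: it defers to the method of Molinet--Ribaud and Carvajal--Esquivel, and your reduction (Plancherel in $\xi$, reduction to a one-parameter convolution estimate in $t$ with parameter $\sigma=i\xi^3+\xi^2$, then the resonant/non-resonant splitting of $(e^{it\tau_1}-e^{-t\sigma})/(i\tau_1+\sigma)$) is exactly that method, so in outline you are on the paper's intended route. The weight bookkeeping is also correct: $\langle i\tau+\sigma\rangle^{b}\lesssim\langle\tau-\tau_1\rangle^{b}\langle i\tau_1+\sigma\rangle^{b}$, the absorption of $\langle i\tau_1+\sigma\rangle^{b-1}$ into $\langle i\tau_1+\sigma\rangle^{-b'}$ because $b+b'<1$, and the uniform $L^2_{\tau_1}$-integrability of $\langle i\tau_1+\sigma\rangle^{-1+b'}$ precisely when $b'<\tfrac12$.

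There is, however, one step that fails as written, and it is the one you single out as ``the point'': the claimed uniform bound $\bigl\|\langle i\tau+\sigma\rangle^{b}\,\mathcal{F}_t[\psi(\cdot)e^{-(\cdot)\sigma}](\tau)\bigr\|_{L^2_\tau}\lesssim 1$ for all $\mathrm{Re}\,\sigma\ge 0$. For $\psi\in C^\infty_0(\mathbb{R})$ whose support meets $(-\infty,0)$ --- unavoidable if $\psi\equiv 1$ on a neighbourhood of $[0,T]$ --- and $\sigma=R>0$ large, Plancherel gives $\|\mathcal{F}_t[\psi(\cdot)e^{-(\cdot)R}]\|_{L^2_\tau}=\|\psi(\cdot)e^{-(\cdot)R}\|_{L^2_t}\gtrsim e^{cR}$, so no amount of integration by parts yields a uniform bound; the same exponential growth already contaminates the kernel $e^{-(t-\tau')\sigma}$ whenever $t-\tau'<0$, so the operator you are estimating is not even well defined on all of $\mathbb{R}_t$ as the $X_{s,b}$ norm requires. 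This is exactly where the dissipative part $\xi^2$ of the KdV--Burgers symbol departs from pure KdV, so your closing remark that no separate argument is needed draws the wrong conclusion: the issue is not the sign of $i\xi^3$ but the presence of $\mathrm{Re}\,\sigma$. The standard repair, which is what Molinet and Ribaud do, is to replace the evolution by its extension to negative times with symbol $e^{-|t|\xi^2+it\xi^3}$ (equivalently, replace $e^{-(t-\tau')\sigma}$ by $e^{-|t-\tau'|\mathrm{Re}\,\sigma-i(t-\tau')\mathrm{Im}\,\sigma}$ in the Duhamel kernel); this coincides with the true semigroup for $0\le\tau'\le t$, which is the only range relevant to the IBVP, and it restores the uniform rapid decay $|\mathcal{F}_t[\psi(\cdot)e^{-|\cdot|\mathrm{Re}\,\sigma}e^{-i(\cdot)\mathrm{Im}\,\sigma}](\tau)|\lesssim_k\langle i\tau+\sigma\rangle^{-k}$ that your non-resonant analysis needs. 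With that substitution made at the outset, the rest of your argument goes through.
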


 \begin{proposition}[Bilinear estimate]\label{bilinearestimate}
 Given $s\geq 0, \, 0<b,b'<\frac{1}{2}$ and $T>0$, there exist positive constants $0<\mu<1$, $C$ and $\delta$ such that
$$
\left\|\partial_x(u v)\right\|_{X_T^{s,b'}} \leqslant C T^\mu\|u\|_{X^{s,b}_T}\|v\|_{X_T^{s, b}}
$$
for any $u, v \in X_T^{s, b}$.
 \end{proposition}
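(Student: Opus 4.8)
The plan is to reduce the time-restricted estimate to a global-in-time bilinear estimate in the $X_{s,b'}$ scale and then exploit the gain coming from the dissipative part of the symbol, namely the factor $\xi^2$ appearing in $\langle i(\tau+\xi^3)+\xi^2\rangle$. First I would fix an extension: choose $\tilde u,\tilde v\in X^{s,b}$ with $\tilde u=u$, $\tilde v=v$ on $[0,T]$ and $\|\tilde u\|_{X^{s,b}}\le 2\|u\|_{X^{s,b}_T}$, similarly for $v$, and replace them by $\psi(t/T)\tilde u$, $\psi(t/T)\tilde v$ with a fixed cutoff $\psi\in C_0^\infty(\R)$, $\psi\equiv 1$ on $[-1,1]$. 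The standard time-localization lemma for Bourgain spaces with indices in $(0,1/2)$ (see the argument behind Proposition 2.7) gives a factor $T^{\mu}$ for some $\mu>0$ depending only on $b,b'$ when one lowers the regularity from $b$ towards $b'$, which is exactly where the $T^\mu$ in the statement comes from; so it suffices to prove the global estimate
\[
\|\partial_x(uv)\|_{X^{s,b'}}\lesssim_s \|u\|_{X^{s,b}}\|v\|_{X^{s,b}}.
\]

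Next I would set up the estimate on the Fourier side. Writing $f=\langle i(\tau+\xi^3)+\xi^2\rangle^{b}\langle\xi\rangle^{s}|\widehat{\widehat u}|$ and $g$ analogously for $v$, and denoting by $\sigma(\xi,\tau):=i(\tau+\xi^3)+\xi^2$ the full symbol, the claim becomes an $L^2_{\xi,\tau}$ bound for
\[
\frac{|\xi|\,\langle\xi\rangle^{s}}{\langle\sigma(\xi,\tau)\rangle^{-b'}}\int \frac{f(\xi_1,\tau_1)}{\langle\sigma(\xi_1,\tau_1)\rangle^{b}\langle\xi_1\rangle^{s}}\,\frac{g(\xi-\xi_1,\tau-\tau_1)}{\langle\sigma(\xi-\xi_1,\tau-\tau_1)\rangle^{b}\langle\xi-\xi_1\rangle^{s}}\,d\xi_1 d\tau_1.
\]
The key algebraic fact is that $\operatorname{Re}\sigma(\xi_1,\tau_1)+\operatorname{Re}\sigma(\xi-\xi_1,\tau-\tau_1)=\xi_1^2+(\xi-\xi_1)^2\gtrsim \xi^2$, so at least one of the two input resonance functions is $\gtrsim\langle\xi\rangle^2$ in its real part (hence in modulus); this is the dissipative analogue of the classical KdV resonance identity $(\tau+\xi^3)=(\tau_1+\xi_1^3)+(\tau-\tau_1+(\xi-\xi_1)^3)+3\xi\xi_1(\xi-\xi_1)$. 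Distinguishing the region where $|\sigma|$ is large in the output (so $\langle\sigma(\xi,\tau)\rangle^{b'}$ in the numerator is controlled by the denominators) from the region where one of the input modulations is $\gtrsim\langle\xi\rangle^2$, I would absorb the derivative $|\xi|\le\langle\xi\rangle$ using that in the latter region $\langle\sigma_{\mathrm{input}}\rangle^{b}\ge\langle\xi\rangle^{2b}$ with $2b$ to be traded against the one derivative — here $b<1/2$ is a constraint, but since $s\ge 0$ one also has the extra room from $\langle\xi\rangle^s$ in the output versus $\langle\xi_1\rangle^{-s}\langle\xi-\xi_1\rangle^{-s}$ inputs via $\langle\xi\rangle^s\lesssim_s\langle\xi_1\rangle^s+\langle\xi-\xi_1\rangle^s$. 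After these reductions the remaining multiplier is bounded and the estimate follows from Young's / Cauchy–Schwarz inequality in $(\xi_1,\tau_1)$, precisely as in the Molinet–Ribaud treatment of KdV–Burgers on $\R$ (which we may invoke after the change $\xi\to-\xi$ used already in the proof of Propositions 2.4–2.7 to reconcile the sign of the cubic term).

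The main obstacle I anticipate is the endpoint bookkeeping with $b,b'<1/2$: unlike the classical $b>1/2$ case one cannot freely use the algebra property $X^{s,b}\hookrightarrow C_tH^s_x$ nor close the Cauchy–Schwarz sum by a single geometric-series argument, so the worst region — where the output modulation $\langle\sigma(\xi,\tau)\rangle$ is large and one must simultaneously control the $b'$ power in the numerator and recover $|\xi|$ — needs the dissipative gain $\xi_1^2+(\xi-\xi_1)^2$ to be spent carefully, splitting into $|\xi|\lesssim 1$ (trivial, since then $|\xi|\le\langle\xi\rangle\lesssim 1$) and $|\xi|\gg 1$ (where $\langle\xi\rangle^2$ of dissipation more than pays for one derivative provided $2b+2b'\ge 1$, or, if $2b+2b'<1$, the missing amount is supplied by lowering $s$-weights as above since $s\ge 0$). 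A secondary technical point is verifying that the time-restriction step genuinely yields a positive power $T^\mu$ with $\mu<1$ and not merely a constant; this is handled by interpolating the trivial bound at index $b'$ with the bound at a slightly larger index $b''\in(b',1/2)$ and using $\|\psi(\cdot/T)F\|_{X^{s,b'}}\lesssim T^{b''-b'}\|F\|_{X^{s,b''}}$, a standard lemma valid for $0\le b'\le b''<1/2$.
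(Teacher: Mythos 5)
You should first be aware that the paper does not actually prove this proposition: it explicitly omits the argument and defers to the methods of Molinet--Ribaud and Carvajal--Esquivel. Your outline (extension plus time cutoff to reduce to a global estimate, duality on the Fourier side, the KdV resonance identity combined with the dissipative gain $\operatorname{Re}\sigma(\xi_1,\tau_1)+\operatorname{Re}\sigma(\xi-\xi_1,\tau-\tau_1)=\xi_1^2+(\xi-\xi_1)^2\gtrsim\xi^2$, then Cauchy--Schwarz in $(\xi_1,\tau_1)$) is precisely the route of those references, so in strategy you are aligned with what the authors intend. The time-localization step you describe, $\|\psi(\cdot/T)F\|_{X^{s,b'}}\lesssim T^{b''-b'}\|F\|_{X^{s,b''}}$ for $b'\le b''<\tfrac12$, is the standard and correct source of the factor $T^\mu$.

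There are, however, two genuine gaps. First, your fallback for the case $2b+2b'<1$ --- that ``the missing amount is supplied by lowering $s$-weights since $s\ge0$'' --- does not work: for $s\ge 0$ the inequality $\langle\xi\rangle^{s}\lesssim\langle\xi_1\rangle^{s}\langle\xi-\xi_1\rangle^{s}$ only allows the Sobolev weights to cancel, so $s=0$ is the worst case and contributes no extra decay whatsoever; nothing is ``supplied.'' Second, and more seriously, the hardest region is not closed, and with the weight $\langle\sigma(\xi,\tau)\rangle^{b'}$ genuinely in the numerator (as the literal statement $X_T^{s,b'}$ with $b'>0$ demands) the estimate cannot hold uniformly for all independent $0<b,b'<\tfrac12$: take $\widehat{\widehat u}$ concentrated near $(\xi_1,\tau_1)=(N,-N^3)$ and $\widehat{\widehat v}$ near frequency $O(1)$; the output modulation is of size $N^2$, so the left side scales like $N^{2b'}$ while the right side scales like $N^{2b}$, which fails whenever $b'>b$ (e.g. $b$ small, $b'$ near $\tfrac12$). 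The bilinear estimate actually available from Molinet--Ribaud places the output in a space with a \emph{negative} modulation exponent, which is also what is consumed in Proposition \ref{Bourgain estimate prop} (whose right-hand side is $\|w\|_{X_{s,-b'}}$) and in the fixed-point argument (where $\|f(u)\|_{\mathcal V_{s,-b}}$ appears). You should therefore either prove the $X^{s,-b'}$ version, where your dissipative/resonance bookkeeping does close, or add an explicit constraint relating $b$ and $b'$; as written, the decisive step of absorbing $|\xi|\,\langle\sigma(\xi,\tau)\rangle^{b'}$ is asserted rather than established.
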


The proofs of the statements \ref{Bourgain estimate prop} and \ref{bilinearestimate} have been omitted as they do not introduce any new ideas. They can be derived by following the methods presented in \cite{CarvajalEsquivel,MoRi}.

	\subsubsection{\bf Boundary value problem with homogeneous initial data}
 	Consider the linear Korteweg de Vries-Burgers equation with non-homogenous boundary condition
	\begin{equation} \label{lineal} \begin{cases}u_{t}-u_{xxx}-u_{xx}=0,\ t>0, \ x>0,\\
			u(x,0)=0, \ x>0,\\
			u(0,t)=h(t), \ u_x(0,t)=g(t), \ t > 0. & \textnormal{ } \end{cases}
	\end{equation}
	Assume that $h,g\in S(\mathbb R)$.
	Let $\widetilde{u}$,  $\widetilde{h}$ and $\widetilde{g}$ denote the Laplace transform of $u$, $h$ and $g$ with respect to $t$, respectively. By applying the Laplace transform to both sides of the equation in \eqref{lineal}, one obtains
 $$
\begin{cases}
    \tau \widetilde{u}(x,\tau)-\widetilde{u}_{x x x}(x,\tau)-\widetilde{u}_{x x}(x,\tau)=0, \\
    \widetilde{u}(0,\tau)=\widetilde{h}(\tau),\\
     \widetilde{u}_x(0,\tau)=\widetilde{g}(\tau).
\end{cases}$$
As both $\widetilde{u}(x,\tau)$ and $\widetilde{u}_x(x,\tau)$ tend to zero as $x \rightarrow+\infty$, it is concluded that for any $\tau$ with $\operatorname{Re} \tau>0$,
$$
\widetilde{u}(x,\tau)=c_1(\tau)e^{r_1(\tau) x}+c_2(\tau)e^{r_2(\tau) x},
$$
where $r_1(\tau)$ and $r_{2}(\tau)$ are the  solutions of
\begin{gather}\label{raices}
\tau-r^3-r^2=0
\end{gather}
satisfying $\operatorname{Re} r_j(\tau)<0$,  $j=1,2$. 
From the initial conditions, direct calculations give us that 
$$c_1(\tau)=\frac{r_2(\tau)\widetilde{h}(\tau)-\widetilde{g}(\tau)}{r_2(\tau)-r_1(\tau)} \text{ and } \quad c_2(\tau)=-\frac{r_1(\tau)\widetilde{h}(\tau)-\widetilde{g}(\tau)}{r_2(\tau)-r_1(\tau)}.$$
Thus, for any fixed $\gamma>0$, one has the representation
\begin{align*}
   u(x,t)&=\frac{1}{2 \pi i} \int_{-i \infty+\gamma}^{i \infty+\gamma} e^{\tau t}\widetilde{u}(x,\tau)  d \tau  \\
   &=\frac{1}{2 \pi i} \int_{-i \infty+\gamma}^{i \infty+\gamma} e^{\tau t}\left(\frac{r_2(\tau)e^{r_1(\tau)x}-r_1(\tau)e^{r_2(\tau)x}}{r_2(\tau)-r_1(\tau)}\widetilde{h}(\tau) + \frac{e^{r_2(\tau)x} -e^{r_1(\tau)x}}{r_2(\tau)-r_1(\tau)}\widetilde{g}(\tau)\right)d\tau.
\end{align*}

Using the fact that the integrant is analytic for $\operatorname{Re}(\tau)\geq 0$, Cauchy theorem implies that
\begin{align*}
   u(x,t)=\frac{1}{2 \pi i} \int_{-i \infty}^{i \infty} e^{\tau t}\left(\frac{r_2(\tau)e^{r_1(\tau)x}-r_1(\tau)e^{r_2(\tau)x}}{r_2(\tau)-r_1(\tau)}\widetilde{h}(\tau) + \frac{e^{r_2(\tau)x} -e^{r_1(\tau)x}}{r_2(\tau)-r_1(\tau)}\widetilde{g}(\tau)\right)d\tau.
\end{align*}

 Note that this representation shows that $u$ is naturally defined for all values of $t\in (0,\infty)$, being very useful for analyzing the time regularity of the solutions. However, to estimate spatial regularity we need to extend $W_{D}(t)h$ and $W_{N}(t)g$ to all $x\in \mathbb{R}$.\\

A change of variables $\tau= i\lambda^3$ and straightforward calculation reveals that for $x, t\geq 0$, the solution is rewrite as
\begin{align}\label{Wb}
   u(x,t)=W_{bdr}(h,g)=W_{D}(t)h+W_N(t)g,
\end{align}
with
    \begin{align*}
    W_D(t)h&=\frac{3}{2 \pi } \int_{- \infty}^{\infty} e^{i\lambda^3 t}\left(\frac{r_2(i\lambda^3)e^{r_1(i\lambda^3)x}-r_1(i\lambda^3)e^{r_2(i\lambda^3)x}}{r_2(i\lambda^3)-r_1(i\lambda^3)}\right)\lambda^2\widetilde{h}(i\lambda^3)d\lambda, \\
    W_{N}(t)g&=\frac{3}{2 \pi } \int_{- \infty}^{\infty} e^{i\lambda^3 t}\left(\frac{e^{r_2(i\lambda^3)x} -e^{r_1(i\lambda^3)x}}{r_2(i\lambda^3)-r_1(i\lambda^3)} \right)\lambda^2\widetilde{g}(i\lambda^3)d\lambda.
\end{align*}

Since $r_j(i\lambda^3)=i\lambda +\mu_j(\lambda)$ where $|\mu_j(\lambda)|\sim |\lambda|^{\frac{1}{3}}$ and $\operatorname{Re}(\mu_j(\lambda))<
0$,  $j=1,2$, the Fourier transform in space can be considered on the expressions
\begin{align*}
    &W_D(t)h={3}\mathcal{F}_{\lambda\to x}^{-1}\left\{e^{i\lambda^3 t}\frac{r_2(i\lambda^3)e^{\mu_1(\lambda)x}-r_1(i\lambda^3)e^{\mu_2(\lambda)x}}{r_2(i\lambda^3)-r_1(i\lambda^3)}\lambda^2\widetilde{h}(i\lambda^3)\right\},\\
   &W_N(t)g:={3}\mathcal{F}_{\lambda\to x}^{-1}\left\{e^{i\lambda^3 t}\frac{e^{\mu_2(\lambda)x} -e^{\mu_1(\lambda)x}}{r_2(i\lambda^3)-r_1(i\lambda^3)} \lambda^2\widetilde{g}(i\lambda^3)\right\}.
\end{align*}
Thus, the extensions to $\R$ of the  operators  follow as
\begin{align}\label{12345}
    W_D(t)h=W_{\R}\, \mathfrak{h} \quad \text{and} \quad 
    W_N(t)g=W_{\R}\, \mathfrak{g},
\end{align}
where $\mathfrak{h}$ and $\mathfrak{g}$ are such that
\begin{align}
    &\mathcal{F}\{\mathfrak{h}\}={3}\left(\frac{r_2(i\lambda^3)e^{\mu_1(\lambda)x} \rho(\text{Re}(\mu_1(\lambda)x))-r_1(i\lambda^3)e^{\mu_1(\lambda)x}\rho(\text{Re}(\mu_2(\lambda)x))}{r_2(i\lambda^3)-r_1(i\lambda^3)}\lambda^2\widetilde{h}(\lambda^3)\right),  \\
    &\mathcal{F}\{\mathfrak{g}\}={3}\left(\frac{e^{\mu_2(\lambda)x}\rho(\mu_2(\lambda)x) -e^{\mu_1(\lambda)x}\rho(\mu_1(\lambda)x)}{r_2(i\lambda^3)-r_1(i\lambda^3)} \right)\lambda^2\widetilde{g}(\lambda^3)),
\end{align}
where $\rho(x)$ is a smooth function in $\R$ supported on $(-1, \infty)$, and $\rho(x)=1$ for $x\geq 0$.

\begin{remark}
The boundary operator \eqref{Wb} can be represented as well as inverse Fourier transform in time, this representation will help us to conclude about regularity in time.
    \begin{align}
    &W_{D}(t)h=\mathcal F_{\tau\to t}^{-1}\left\{\frac{r_2(i\tau)e^{r_1(i\tau)x}-r_1(i\tau)e^{r_2(i\tau)x}}{r_2(i\tau)-r_1(i\tau)}\widetilde{h}(i\tau) \right\}, \label{123}\\
    &W_{N}(t)h=\mathcal F_{\tau\to t}^{-1}\left\{\frac{e^{r_2(i\tau)x} -e^{r_1(i\tau)x}}{r_2(i\tau)-r_1(i\tau)}\widetilde{g}(i\tau)\right\}.\label{124}
\end{align}
\end{remark}
 
The following lemma deals with the behavior of the spatial traces of the boundary operators.

\begin{lemma}
 Let $\alpha \in \mathbb R$ be given, for $h$ and $g$ satisfying $\chi_{(0, \infty)} h \in \h^{\alpha}(\mathbb{R}) \quad \text{and} \quad \chi_{(0, \infty)} g\in \h^{\alpha-\frac{1}{3}}(\mathbb{R})$ then 
$$
\sup _{x \in \mathbb R^+}\left\|W_{D}(t) h\right\|_{\h_t^{\alpha}(\mathbb R)} \lesssim_s \|\chi_{(0,\infty)}h\|_{\h^\alpha(\mathbb R)}, \quad
\sup _{x \in \mathbb R^+}\left\| W_{N}(t) g\right\|_{\h_t^{\alpha}(\mathbb R)} \lesssim \|\chi_{(0,\infty)}g\|_{\h^{\alpha-\frac{1}{3}}(\mathbb R)}.
$$
\end{lemma}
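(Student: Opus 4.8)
The plan is to reduce the statement to two uniform pointwise bounds on the Fourier multipliers appearing in \eqref{123}--\eqref{124}. It suffices to prove the estimates for $h,g\in\mathcal S(\R)$ and then extend by density. Write $r_1=r_1(i\tau)$, $r_2=r_2(i\tau)$ for the two roots of \eqref{raices} with $\operatorname{Re} r_j\le 0$, and set
\[
m_D(\tau,x):=\frac{r_2\,e^{r_1 x}-r_1\,e^{r_2 x}}{r_2-r_1},\qquad
m_N(\tau,x):=\frac{e^{r_2 x}-e^{r_1 x}}{r_2-r_1},
\]
so that, for each fixed $x\ge 0$, \eqref{123}--\eqref{124} read $\widehat{W_D(t)h}(\tau)=m_D(\tau,x)\,\widetilde h(i\tau)$ and $\widehat{W_N(t)g}(\tau)=m_N(\tau,x)\,\widetilde g(i\tau)$. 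Since $\widetilde h(i\tau)$ coincides, up to a harmless constant, with $\widehat{\chi_{(0,\infty)}h}(\tau)$ (and likewise for $g$), Plancherel's theorem in $t$ reduces the lemma to
\[
\sup_{x\ge 0,\ \tau\in\R}|m_D(\tau,x)|\lesssim 1,\qquad
\sup_{x\ge 0,\ \tau\in\R}\langle\tau\rangle^{\frac13}\,|m_N(\tau,x)|\lesssim 1 .
\]
Indeed, the first bound gives $\|W_D(t)h\|_{\h_t^{\alpha}(\R)}^2\lesssim\int_\R\langle\tau\rangle^{2\alpha}|\widehat{\chi_{(0,\infty)}h}(\tau)|^2\,d\tau=\|\chi_{(0,\infty)}h\|_{\h^{\alpha}(\R)}^2$, while the second lets one absorb a factor $\langle\tau\rangle^{-\frac23}$ so that $\langle\tau\rangle^{2\alpha}|m_N|^2\lesssim\langle\tau\rangle^{2(\alpha-\frac13)}$, yielding $\|\chi_{(0,\infty)}g\|_{\h^{\alpha-\frac13}(\R)}^2$ on the right.

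The two multiplier bounds rest entirely on the location of the roots of \eqref{raices} along the imaginary axis. The facts I would use are: (i) $\operatorname{Re} r_j(i\tau)<0$ for $\tau\neq 0$ — this is the selection condition for the admissible roots and it is strict off the origin, so that $|e^{r_j(i\tau)x}|\le 1$ for every $x\ge 0$; (ii) $|r_j(i\tau)|\lesssim\langle\tau\rangle^{\frac13}$, which follows from $|r|^3\sim|\tau|$ for $|\tau|$ large and from boundedness of the roots on compact sets of $\tau$; and (iii) the quantitative separation $|r_1(i\tau)-r_2(i\tau)|\gtrsim\langle\tau\rangle^{\frac13}$. For (iii) I would split into two regimes: for $|\tau|$ large, the two admissible roots are asymptotic to $\lambda$ times two \emph{distinct} cube roots of $i$ (with $\tau=\lambda^3$), so their difference is $\sim|\lambda|\sim\langle\tau\rangle^{\frac13}$; for $\tau$ in a bounded set, the discriminant of $r^3+r^2-\tau$, which equals $\tau(4-27\tau)$, does not vanish on the imaginary axis except at $\tau=0$, so $r_1(i\tau)\neq r_2(i\tau)$ there and the infimum over compacta is strictly positive, while near $\tau=0$ the two admissible roots tend to $0$ and $-1$ and stay separated; the single value $\tau=0$ is a null set and may be discarded.

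Granting (i)--(iii), the bound on $m_D$ is
\[
|m_D(\tau,x)|\le\frac{|r_1(i\tau)|+|r_2(i\tau)|}{|r_1(i\tau)-r_2(i\tau)|}\lesssim\frac{\langle\tau\rangle^{\frac13}}{\langle\tau\rangle^{\frac13}}=1,
\]
and the bound on $m_N$ is $|m_N(\tau,x)|\le\dfrac{2}{|r_1(i\tau)-r_2(i\tau)|}\lesssim\langle\tau\rangle^{-\frac13}$, both uniform in $x\ge 0$; feeding these into the reduction above completes the proof.

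The main obstacle is step (iii): obtaining the lower bound on $|r_1(i\tau)-r_2(i\tau)|$ uniformly over the whole imaginary axis, i.e. a quantitative non-coalescence of the two admissible roots with the correct growth rate $\langle\tau\rangle^{\frac13}$. This is where the explicit cubic \eqref{raices} (and its discriminant) must be exploited, and it is exactly what forces the $\tfrac13$-derivative loss in the Neumann estimate. Everything else is Plancherel's theorem and the elementary fact that $|e^{zx}|\le 1$ whenever $\operatorname{Re} z\le 0$ and $x\ge 0$. (An alternative route is to transport the spatial variable through the full-line propagator via \eqref{12345} and invoke Proposition \ref{RlinearIDRegularity}, but since the statement concerns only the time traces, the direct Fourier-in-time computation above is the shortest path.)
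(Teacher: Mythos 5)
Your proposal is correct and follows essentially the same route as the paper: represent $W_D(t)h$ and $W_N(t)g$ via the Fourier-in-time formulas \eqref{123}--\eqref{124}, establish the uniform multiplier bounds $|m_D|\lesssim 1$ and $|m_N|\lesssim\langle\tau\rangle^{-1/3}$ for $x\ge 0$, and conclude by Plancherel. The only difference is one of detail: the paper simply asserts the two multiplier bounds, whereas you justify them via $\operatorname{Re} r_j(i\tau)\le 0$, the growth $|r_j(i\tau)|\lesssim\langle\tau\rangle^{1/3}$, and the non-vanishing of the discriminant $\tau(4-27\tau)$ off the origin, which is a welcome (and correct) filling-in of the step the paper leaves implicit.
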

\begin{proof}
From the characterization of the boundary operators  \eqref{123} and \eqref{124}, together with    
    $$\frac{r_2(i\tau)e^{r_1(i\tau)x}-r_1(i\tau)e^{r_2(i\tau)x}}{r_2(i\tau)-r_1(i\tau)}\sim O(1) \quad \text{ and }\quad \frac{e^{r_2(i\tau)x} -e^{r_1(i\tau)x}}{r_2(i\tau)-r_1(i\tau)}\sim O(\langle \tau \rangle^{-\frac{1}{3}}),$$
the following estimates are obtained
    \begin{align*}
        \left\| W_{D}(t)h\right\|_{\h_t^{\alpha}(\mathbb{R})}^2 &\lesssim \left\|\langle \tau \rangle^{\alpha}\frac{r_2(i\tau)e^{r_1(i\tau)|x|}-r_1(i\tau)e^{r_2(i\tau)|x|}}{r_2(i\tau)-r_1(i\tau)}\widetilde{h}(i\tau)\right\|_{ L^2(\mathbb{R})}^2\\
        &=\int_{\mathbb R}\langle \tau \rangle^{2\alpha}\left|\frac{r_2(i\tau)e^{r_1(i\tau)|x|}-r_1(i\tau)e^{r_2(i\tau)|x|}}{r_2(i\tau)-r_1(i\tau)}\right|^2|\widetilde{h}(i\tau)|^2d\tau\\
        &\lesssim \int_{\mathbb R}\langle \tau \rangle^{2\alpha}|\widetilde{h}(i\tau)|^2d\tau=\|\chi_{(0,\infty)}h\|_{\h^\alpha(\mathbb R)}.
    \end{align*}
Similarly 
  \begin{align*}
        \left\| W_{N}(t)g\right\|_{\h_t^{\alpha}(\mathbb{R})}^2 &\lesssim \left\|\frac{e^{r_2(i\tau)x} -e^{r_1(i\tau)|x|}}{r_2(i\tau)-r_1(i\tau)}\widetilde{g}(i\tau)\right\|_{ L^2(\mathbb{R})}^2=\int_{\mathbb R}\langle \tau \rangle^{2\alpha}\left|\frac{e^{r_2(i\tau)|x|} -e^{r_1(i\tau)x}}{r_2(i\tau)-r_1(i\tau)}\right|^2|\widetilde{g}(i\tau)|^2d\tau\\
        &\lesssim \int_{\mathbb R}\langle \tau \rangle^{2\alpha-\frac{2}{3}}|\widetilde{g}(i\tau)|^2d\tau=\|\chi_{(0,\infty)}g\|_{\h^{\alpha-\frac{1}{3}}(\mathbb R)}.
    \end{align*}
    \end{proof}

For $\alpha\geq 0$, we  have the embedding $H^{\frac{\alpha-1}{3}}(\mathbb R)\subset H^{\alpha-\frac{1}{3}}(\R)$, therefore
\begin{equation}
 \left\| W_{N}(t)g\right\|_{\h_t^{\alpha}(\mathbb{R})}\lesssim  \|\chi_{(0,\infty)}g\|_{\h^{\frac{\alpha-1}{3}}(\mathbb R)} .
\end{equation}

Now, our attention is focused on the space regularity of Boundary operators.
\begin{proposition}\label{boudnaryoperators1}
       Let $s \geq 0$, $h$ and $g$ satisfying $\chi_{(0, \infty)} h \in \h^{\frac{s+1}{3}}(\mathbb{R})$, $\chi_{(0, \infty)} g\in \h^{\frac{s}{3}}(\mathbb{R})$, then 
    $W_{D}(t)h, W_{D}(t)g \in  L^\infty(\mathbb R; \h^s(\mathbb R))$ and
    \begin{equation}\label{Halpha dirichlet}
        \sup _{t \in \mathbb R}\left\|W_{D}(t)h\right\|_{\h_x^s(\mathbb R)} \lesssim \|h\|_{\h^{\frac{s+1}{3}}\left(\mathbb R^{+}\right)},
    \end{equation}
\begin{equation}\label{Wng}
\sup _{t \in \mathbb R}\left\|W_{N}(t)g\right\|_{\h_x^s(\mathbb R)} \lesssim \|g\|_{\h^{\frac{s}{3}}{\left(\mathbb R^{+}\right)}} .
\end{equation}
\end{proposition}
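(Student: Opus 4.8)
The plan is to factor the boundary operators through the free propagator $W_{\mathbb R}$, reduce the statement to a Sobolev bound on the extended data $\mathfrak h$, $\mathfrak g$, and then read off the exponents $\tfrac{s+1}{3}$ and $\tfrac{s}{3}$ from an explicit change of variables in frequency. Concretely, by the identities in \eqref{12345}, for $t\ge 0$ one has $W_{D}(t)h=W_{\mathbb R}(t)\,\mathfrak h$ and $W_{N}(t)g=W_{\mathbb R}(t)\,\mathfrak g$. The Fourier symbol of $W_{\mathbb R}(t)$ is $e^{-(i\xi^{3}+\xi^{2})t}$, of modulus $e^{-t\xi^{2}}\le 1$ for $t\ge 0$, so the Remark following Proposition~\ref{RlinearForcing} gives the uniform-in-time bound $\sup_{t\ge 0}\|W_{\mathbb R}(t)f\|_{H^{s}(\mathbb R)}\lesssim\|f\|_{H^{s}(\mathbb R)}$ (over the whole time line one first localizes by a cut-off $\psi\in C_{0}^{\infty}(\mathbb R)$, exactly as in the construction of the solution). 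Hence it is enough to prove $\|\mathfrak h\|_{H^{s}(\mathbb R)}\lesssim\|h\|_{H^{(s+1)/3}(\mathbb R^{+})}$ and $\|\mathfrak g\|_{H^{s}(\mathbb R)}\lesssim\|g\|_{H^{s/3}(\mathbb R^{+})}$, which are statements about the multipliers in the formulas for $\mathcal F\{\mathfrak h\}$ and $\mathcal F\{\mathfrak g\}$ recorded right after \eqref{12345}.

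\emph{The ingredients.} From \eqref{raices} and the asymptotics $r_{j}(i\lambda^{3})=i\lambda+\mu_{j}(\lambda)$ with $\mathrm{Re}\,\mu_{j}(\lambda)<0$, together with the multiplier bounds already used in the proof of the preceding lemma — the Dirichlet quotient $\bigl(r_{2}e^{r_{1}x}-r_{1}e^{r_{2}x}\bigr)/(r_{2}-r_{1})$ is $O(1)$ and the Neumann quotient $\bigl(e^{r_{2}x}-e^{r_{1}x}\bigr)/(r_{2}-r_{1})$ is $O(\langle\lambda\rangle^{-1})$, uniformly in $x\ge 0$ — one checks two things. First, the profiles $e^{\mu_{j}(\lambda)x}\rho(\mathrm{Re}(\mu_{j}(\lambda)x))$ are bounded uniformly in $(\lambda,x)$: the cut-off $\rho$, supported in $(-1,\infty)$ and $\equiv1$ on $[0,\infty)$, deletes precisely the half-line on which the exponential would grow. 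Second, their Fourier transforms in $x$ are $L^{1}_\xi$-normalized and concentrated uniformly in $\lambda$, so that passing $\mathcal F_{x}$ through the formulas turns $\mathcal F_{x}\{\mathfrak h\}(\xi)$ into the $\xi$-convolution of those uniform kernels against $\lambda\mapsto m_{D}(\lambda)\,\lambda^{2}\,\widetilde h(i\lambda^{3})$ with $m_{D}=O(1)$, and likewise $\mathcal F_{x}\{\mathfrak g\}$ against $\lambda\mapsto m_{N}(\lambda)\,\lambda^{2}\,\widetilde g(i\lambda^{3})$ with $m_{N}=O(\langle\lambda\rangle^{-1})$.

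\emph{Power count.} Applying Plancherel in $x$ and discarding the uniformly $L^{1}$-normalized convolution kernels without loss of derivatives (a Schur-type test), $\|\mathfrak h\|_{H^{s}(\mathbb R)}^{2}$ is controlled by an integral of the form $\int_{\mathbb R}\langle\lambda\rangle^{2s}\,|m_{D}(\lambda)|^{2}\,\lambda^{4}\,|\widetilde h(i\lambda^{3})|^{2}\,d\lambda$, the $\lambda^{4}$ collecting the squares of the $\lambda^{2}$ weights present in \eqref{12345}. The substitution $\tau=\lambda^{3}$, $d\tau=3\lambda^{2}\,d\lambda$, together with $|m_{D}|=O(1)$, turns this into $\int_{\mathbb R}\langle\tau\rangle^{2(s+1)/3}\,|\widetilde h(i\tau)|^{2}\,d\tau\simeq\|h\|^{2}_{H^{(s+1)/3}(\mathbb R^{+})}$, the exponent $\tfrac{s+1}{3}$ being exactly what balances the $H^{s}_x$-weight $\langle\lambda\rangle^{2s}$, the $\lambda^{4}$ weight, and the Jacobian $\lambda^{-2}$. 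For $\mathfrak g$ the only change is $|m_{N}|=O(\langle\lambda\rangle^{-1})$, which lowers the required regularity by $\tfrac13$ and gives $\int_{\mathbb R}\langle\tau\rangle^{2s/3}\,|\widetilde g(i\tau)|^{2}\,d\tau\simeq\|g\|^{2}_{H^{s/3}(\mathbb R^{+})}$; this one-third gap between the admissible Dirichlet and Neumann data is the only structural difference between the two estimates.

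\emph{The main obstacle.} The delicate point is the low-frequency region $\lambda\to 0$: there the Jacobian $\lambda^{2}$ of $\tau=\lambda^{3}$ degenerates and the two roots $r_{1}(i\lambda^{3})$, $r_{2}(i\lambda^{3})$ may coalesce, so one must verify that the quotients defining $\mathcal F\{\mathfrak h\}$ and $\mathcal F\{\mathfrak g\}$ extend smoothly and boundedly across $\lambda=0$ — which they do, the zero of $r_{2}-r_{1}$ being matched by a zero of the numerator, these quotients being the Green kernel of a non-degenerate third-order two-point boundary value problem. The remaining care is the justification that the $x$-convolution kernels arising from $e^{\mu_{j}(\lambda)x}\rho(\mathrm{Re}(\mu_{j}(\lambda)x))$ really are uniformly $L^{1}$-bounded and concentrated, so that they contribute no loss of regularity; this is exactly where the sign condition $\mathrm{Re}\,\mu_{j}<0$ and the precise choice of $\rho$ enter.
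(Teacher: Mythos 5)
Your overall strategy — reduce to the representation of $W_D(t)h$ and $W_N(t)g$ through the extended data $\mathfrak h$, $\mathfrak g$, show the $x$-dependent amplitudes $e^{\mu_j(\lambda)x}\rho(\mathrm{Re}(\mu_j(\lambda)x))$ are harmless, and read off the exponents from the substitution $\tau=\lambda^3$ — is the same as the paper's, and your power count (the $O(1)$ Dirichlet quotient giving $\tfrac{s+1}{3}$, the $O(\langle\lambda\rangle^{-1})$ Neumann quotient giving $\tfrac{s}{3}$) is correct. Where you diverge is in the technical device for the key step: the paper recognizes $\partial_x^n W_D(t)h$ as $\Psi_{p_1}(\nu_1)+\Psi_{p_2}(\nu_2)$ with symbols $p_j\in S^0$ (uniformly in $t$, since the time factor $e^{i\lambda^3t}$ is unimodular), invokes the Sobolev boundedness theorem for pseudodifferential operators (Theorem \ref{Sobolev boundedness}) to pass to $\|\nu_j\|_{L^2}$ with $\widehat{\nu_j}(\lambda)=r_j^n(i\lambda^3)\lambda^2\widetilde h(\lambda^3)$, and then interpolates from integer $n$ to general $s$; you instead work directly at general $s$ and replace the pseudodifferential calculus by a hands-on Young/Schur argument on the $\xi$-convolution kernels $\widehat{\upsilon_j}(\xi,\lambda)$. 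Your route is more self-contained, but the step you describe as "discarding the uniformly $L^1$-normalized convolution kernels without loss of derivatives" is precisely the content of the paper's appeal to the $S^0$ calculus, and you assert it rather than prove it: one must check both that $\langle\xi\rangle^{s}$ can be traded for $\langle\lambda\rangle^{s}$ against the rapid decay of $\widehat\varsigma$ (using that the kernel concentrates at $\xi\approx\lambda+\mathrm{Im}\,\mu_j(\lambda)$ with width $|\mathrm{Re}\,\mu_j(\lambda)|\lesssim\langle\lambda\rangle^{1/3}$), and that the Schur bound $\sup_\xi\int K_j(\xi,\lambda)\,d\lambda<\infty$ survives the degeneracy at $\lambda=0$, where one root of $\tau-r^3-r^2=0$ tends to $0$ and hence $\mathrm{Re}\,\mu_j(\lambda)\to0$, so the kernel concentrates to a delta; you flag this but do not resolve it. A second soft spot is your reduction through $\sup_{t\ge0}\|W_{\mathbb R}(t)f\|_{H^s}\lesssim\|f\|_{H^s}$: the identity \eqref{12345} is not a genuine factorization through the dissipative propagator with symbol $e^{-(i\xi^3+\xi^2)t}$ (the dissipation has already been absorbed into the $x$-dependence of the amplitude, and the residual time factor is the unimodular $e^{i\lambda^3t}$), and $\mathcal F\{\mathfrak h\}$ depends on $x$, so "$\|\mathfrak h\|_{H^s}$" is not literally a norm of a fixed function; this matters both for the claimed $\sup_{t\in\mathbb R}$ (your bound as stated only covers $t\ge0$) and for the legitimacy of applying Plancherel. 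Both issues are repairable along the lines you sketch, but as written the proposal leaves the hardest estimate as an assertion.
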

\begin{proof}
From  the representation of the 
boundary operators in \eqref{12345}, for every $t>0$, we have
\begin{align}  
    \partial_x^n W_D(t)h=\frac{3}{2\pi i}\int_{\mathbb R}  e^{i\lambda x}\left(r_1^n(i\lambda^3)p_1(x,\lambda)-r_2^n(i\lambda^3)p_2(x,\lambda)\right)\lambda^{2}\widetilde{h}(\lambda^3)d\lambda,\label{derivada W}
\end{align}
where $$p_1(x,\lambda):=e^{i\lambda^3 t}\frac{r_2(i\lambda^3)e^{\mu_1(\lambda)x}\rho(\text{Re}(\mu_1(\lambda)x))}{r_2(i\lambda^3)-r_1(i\lambda^3)},$$ 
$$p_2(x,\lambda):=e^{i\lambda^3 t}\frac{r_1(i\lambda)e^{\mu_2(\lambda^3)x}\rho(\text{Re}(\mu_2(\lambda)x))}{r_2(i\lambda^3)-r_1(i\lambda^3)},$$
with $\rho(x)$ a smooth function supported on $(-\infty,1)$, and $\rho(x)=1$ for $x\leq 0$ and 
$n\in \mathbb N$. As you can see, the symbols depend on the variable $t$. We have omitted this dependence because the bound is uniform in time.\\

$\partial_x^n W_D(t)h$ can be written in terms of two pseudo-differential operators with symbols $p_1$ and $p_2$. Indeed,  for 
$$ \Psi_p f(x)=\frac{1}{2\pi i}\int_{\mathbb{R}} e^{ i x \cdot \xi} p(x, \xi) \widehat{f}(\xi) d \xi, \quad f \in \mathcal{S}\left(\mathbb{R}^n\right), $$
see Appendix \ref{useful results} for more details, we have
\begin{equation}
     [\partial_x^n W_D(t)h](x)=3\Psi_{p_1}(\nu_1)(x)+3\Psi_{p_2}(\nu_2)(x),
\end{equation} 
where $\nu_j$ is given via its Fourier transform as $\widehat{\nu_j}(\lambda)=r_j^n(i\lambda^3)\lambda^{2}\widetilde{h}(\lambda^3).$
It is important to observe $p_j\in S^0$, $j=1,2$ where the family of symbols $S^m$ is defined in Appendix \eqref{simbol}. Thus, by Theorem \ref{Sobolev boundedness}, we have
$$\|\partial_x^n W_D(t)h\|_{L^2(\mathbb R)}\lesssim \| \nu_1\|_{\mathbf L^2(\mathbb R)}+\| \nu_2\|_{\mathbf L^2(\mathbb R)} \lesssim \|\chi_{(0, \infty)} h\|_{\h^{\frac{n+1}{3}}(\mathbb R)}.$$
By interpolation, inequality \eqref{Halpha dirichlet} follows. Similarly, the inequality \eqref{Wng} can be obtained.
\end{proof}
The following proposition estimates the norms of the Dirichlet and Neumann boundary operators in Bourgain space. The proof presented here draws inspiration from the ideas developed in \cite{erdougan2016regularity}.

\begin{proposition}\label{boundary operator}
    Let $s\geq 0$ and $0<b<\frac{1}{18}$. For $\varphi \in C^{\infty}_0(\R)$ (smooth function with compact support), then there exists a constant $C:=C(s,b,\varphi)>0$ such that
    \begin{gather}\label{3.22}
   \left\|\varphi(t) W_D(t) h\right\|_{X^{s, b}} \lesssim\left\|\chi_{(0, \infty)} h\right\|_{H^{\frac{s+1}{3}}(\mathbb{R})}, 
    \end{gather}
    and
     \begin{gather}\label{3.23}
   \left\|\varphi(t) W_N(t) g\right\|_{X^{s, b}} \lesssim\left\|\chi_{(0, \infty)} g\right\|_{H^{\frac{s}{3}}(\mathbb{R})} .
    \end{gather}
\end{proposition}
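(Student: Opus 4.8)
The strategy is to exploit the key identity \eqref{12345}, namely $W_D(t)h = W_{\R}\,\mathfrak h$ and $W_N(t)g = W_{\R}\,\mathfrak g$, which reduces both boundary estimates to the free-evolution estimate \eqref{C1}-type bound, i.e. Proposition~\ref{RlinearID}(1): $\|\varphi(t)W_{\R}(t)u_0\|_{\chi_{s,b}}\le C\|u_0\|_{\h^s(\R)}$, and in particular $\|\varphi(t)W_{\R}(t)u_0\|_{X^{s,b}}\le C\|u_0\|_{\h^s(\R)}$. Therefore it suffices to show that the ``effective initial data'' satisfy
$$
\|\mathfrak h\|_{\h^s(\R)} \lesssim \|\chi_{(0,\infty)}h\|_{\h^{\frac{s+1}{3}}(\R)}, \qquad \|\mathfrak g\|_{\h^s(\R)} \lesssim \|\chi_{(0,\infty)}g\|_{\h^{\frac{s}{3}}(\R)}.
$$
Once these two bounds are in hand, \eqref{3.22} and \eqref{3.23} follow immediately by composing with Proposition~\ref{RlinearID}(1). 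So the whole proof collapses to a spatial-frequency estimate on $\mathfrak h$ and $\mathfrak g$.

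\textbf{Estimating $\mathfrak h$ and $\mathfrak g$.}
First I would record the asymptotics of the roots $r_j(i\lambda^3)$: writing $r_j(i\lambda^3) = i\lambda + \mu_j(\lambda)$ with $|\mu_j(\lambda)|\sim |\lambda|^{1/3}$ and $\operatorname{Re}\mu_j(\lambda) < 0$ for large $|\lambda|$ (with $j=1,2$ being distinct so that $|r_2 - r_1|$ is bounded below away from the small-$\lambda$ regime, where a separate elementary bound handles the behavior). From these one reads off the multiplier sizes: in $\mathcal F\{\mathfrak h\}$ the factor $\dfrac{r_2(i\lambda^3)}{r_2(i\lambda^3)-r_1(i\lambda^3)}$ is $O(1)$, while the cutoff $\rho(\operatorname{Re}(\mu_j(\lambda)x))$ localizes $x$ so that $e^{\mu_j(\lambda)x}$ stays bounded; hence, after taking $L^2_x$ and using a change of variables $\lambda^3 = \tau$ (so $d\lambda \sim \langle\tau\rangle^{-2/3}d\tau$ and $\lambda^2\,d\lambda = \tfrac13 d\tau$), the $\lambda^2$ weight combined with the Jacobian converts $\langle\lambda\rangle^{2s}|\lambda^2\widetilde h(\lambda^3)|^2\,d\lambda$ into $\langle\tau\rangle^{\frac{2(s+1)}{3}}|\widetilde h(\tau)|^2\,d\tau$ up to constants, which is exactly $\|\chi_{(0,\infty)}h\|_{\h^{(s+1)/3}}^2$. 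For $\mathfrak g$ the only difference is the absence of the $r_j$ numerator, which costs one extra power: $\dfrac{1}{r_2 - r_1}\sim \langle\lambda\rangle^{-1/3}$, so the gain of one derivative in $\lambda$ is precisely the shift from $\frac{s+1}{3}$ to $\frac{s}{3}$, matching \eqref{3.23}. This is essentially the spatial-trace computation already done in the Lemma and in Proposition~\ref{boudnaryoperators1}, repeated here with the extra $\langle\lambda\rangle^{2s}$ weight.

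\textbf{Main obstacle.}
The delicate point is not the large-frequency regime (handled cleanly by the root asymptotics and the exponential decay provided by $\rho$ and $\operatorname{Re}\mu_j<0$) but the transition and small-frequency behavior, where $r_1(i\lambda^3)$ and $r_2(i\lambda^3)$ may coalesce and $\mu_j(\lambda)$ is not yet in its asymptotic regime, so that $\dfrac{1}{r_2-r_1}$ could blow up. One must check that near $\lambda = 0$ the combinations
$$
\frac{r_2\,e^{\mu_1 x} - r_1\,e^{\mu_2 x}}{r_2 - r_1}, \qquad \frac{e^{\mu_2 x} - e^{\mu_1 x}}{r_2 - r_1}
$$
remain bounded (the apparent singularity is removable, being a divided difference), and that the $\lambda^2$ factor in front kills any mild growth; since $\widetilde h$ and $\widetilde g$ are evaluated at $\lambda^3$ and $h,g\in\h^{(s+1)/3},\h^{s/3}$ with the low-frequency part contributing a finite, harmless amount, this region gives no trouble. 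I would dispatch it by splitting $\R = \{|\lambda|\le 1\}\cup\{|\lambda|>1\}$, treating the bounded piece by continuity/compactness and the unbounded piece by the asymptotic multiplier bounds above, then invoke Proposition~\ref{RlinearID}(1) to finish. The constant $b < \tfrac{1}{18}$ enters only through the range of validity of Proposition~\ref{RlinearID}(1) (and the companion estimates from \cite{erdougan2016regularity}); no sharper use of $b$ is needed here.
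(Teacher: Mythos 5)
Your central reduction does not work, and it hides exactly the difficulty the proposition is about. The identity \eqref{12345}, $W_D(t)h = W_{\R}\,\mathfrak h$, is an abuse of notation: the quantity the paper writes as $\mathcal F\{\mathfrak h\}$ still depends on $x$ through the factors $e^{\mu_j(\lambda)x}\rho(\operatorname{Re}(\mu_j(\lambda)x))$, so $\mathfrak h$ is not a fixed function of $x$ with a well-defined $H^s(\R)$ norm, and $W_D(t)h$ is not the free evolution of any $t$-independent datum --- it is a pseudodifferential operator with an $x$-dependent symbol (this is precisely how the paper handles it in Proposition \ref{boudnaryoperators1}, via $\Psi_{p_1},\Psi_{p_2}$). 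Moreover, the time dependence in $W_D$ is the pure oscillation $e^{i\lambda^3 t}$, not the KdV--Burgers multiplier $e^{-(i\lambda^3+\lambda^2)t}$ that defines $W_\R$. Consequently you cannot invoke Proposition \ref{RlinearID}(1) and collapse \eqref{3.22} to a bound on $\|\mathfrak h\|_{H^s(\R)}$: there is no genuine initial datum to apply it to, and the whole proof cannot ``collapse to a spatial-frequency estimate.''

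What is actually required is a two-dimensional space--time Fourier estimate, and this is where the hypothesis $b<\tfrac{1}{18}$ does real work. The $X^{s,b}$ weight $\langle i(\tau+\xi^3)+\xi^2\rangle^{b}$ measures distance to the characteristic set of $\partial_t-\partial_x^3-\partial_x^2$, while $\mathcal F_{tx}\{\varphi(t)\partial_x^nW_D(t)h\}$ is concentrated (up to tails of width $|\operatorname{Re}\mu_j(\lambda)|\sim|\lambda|^{1/3}$) near $\tau\approx\lambda^3$, $\xi\approx\lambda+\operatorname{Im}\mu_j(\lambda)$, which lies far from that characteristic set; the weight therefore produces genuine growth in $\lambda$ that must be absorbed by the decay of the kernel. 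The paper's proof (after Erdo\u{g}an--Tzirakis) computes the space--time transform explicitly, uses $|\widehat{\upsilon_j}(\xi,\lambda)|\lesssim |\operatorname{Re}\mu_j(\lambda)|^m\big/\big(|\operatorname{Re}\mu_j(\lambda)|^m+|\xi-\lambda-\operatorname{Im}\mu_j(\lambda)|^m\big)$, the bound $\langle i(\tau+\xi^3)+\xi^2\rangle\lesssim\langle\tau-\lambda^3\rangle\langle|\lambda|+|\xi|\rangle^3$, splits according to $|\lambda|^m+|\xi|^m\gtrless 1$, and only then does the restriction on $b$ make the powers of $|\lambda|$ close after taking the $L^2_\xi$ norm and substituting $w=\lambda^3$. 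Your claim that $b<\tfrac{1}{18}$ ``enters only through the range of validity of Proposition \ref{RlinearID}(1)'' is therefore incorrect: that proposition holds for all $0<b\le 1$, and the restriction is specific to the boundary operators. Your discussion of the removable singularity of the divided differences at $\lambda=0$ and of the root asymptotics is reasonable but addresses a side issue, not the main obstruction.
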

\begin{proof}
By interpolation arguments, we can reduce the problem to the case where $n$ is a natural number and $b$ lies between 0 and 1/2. Now, for any smooth function $\varphi$ with compact support, taking the Fourier transform of both sides of equation \eqref{derivada W} with respect to both time and space, we obtain
\begin{multline*}  
    \mathcal F_{tx}\left\{\varphi(t)\partial_x^nW_{D}(t)h \right\}(\tau,\xi)= \mathcal{F}_t\{\varphi(t)\}*\mathcal{F}_t\mathcal{F}_x\{\partial_x^nW_{D}(t)h\}\\
    =\int_{\mathbb R}\widehat{\varphi}(\tau-\lambda^3)\frac{r_2(i\lambda^3)r^n_1(i\lambda^3)\widehat{\upsilon_1}(\xi,\lambda)-r_1(i\lambda^3)r^n_2(i\lambda^3)\widehat{\upsilon_2}(\xi,\lambda)}{r_2(i\lambda^3)-r_1(i\lambda^3)}\lambda^2\widetilde{h}(i\lambda^3)d\lambda,
\end{multline*} where 
$$\upsilon_j(x,\lambda)=e^{ix(\lambda+\text{Im}(\mu_j(\lambda)))}\varsigma(\text{Re}(\mu_j(\lambda)x)), $$ with $\varsigma(x)=e^{x} \rho(x)$, where $\rho(x)$ is a smooth function supported on $(-\infty,1)$, and $\rho(x)=1$ for $x\leq 0$. Note that $\varsigma\in S(\mathbb R)$, this implies
$$|\widehat{\upsilon_j}(\xi,\lambda)|\lesssim  \frac{|\text{Re}(\mu_j(\lambda))|^m}{|\text{Re}(\mu_j(\lambda))|^m+|\xi-(\lambda +\text{Im}(\mu_j(\lambda)))|^m}, \quad m\in \mathbb N.$$
Thus
$$|\mathcal F_{tx}\left\{\varphi(t)\partial_x^nW_{D}(t)h \right\}(\xi,\tau)|\lesssim \int_{\mathbb R}|\widehat{\varphi}(\tau-\lambda^3)|\left( |\widehat{\upsilon_1}(\xi,\lambda)|+|\widehat{\upsilon_2}(\xi,\lambda)|\right)|\lambda|^{2+n}|\widetilde{h}(i\lambda^3)|d\lambda,$$ therefore
\begin{align}
   \left\|\varphi(t)\partial_x^nW_{D}(t)h \right\|_{X^{n, b}}& \lesssim\left\|\left\langle i(\tau+\xi^3) +\xi^2\right\rangle^{b} \int_{\mathbb R}|\widehat{\varphi}(\tau-\lambda^3)|\left( |\widehat{\upsilon_1}(\xi,\lambda)|+|\widehat{\upsilon_2}(\xi,\lambda)|\right)|\lambda|^{2+n}|\widetilde{h}(i\lambda^3)|d\lambda \right\|_{L_{\xi}^2 L_\tau^2} \notag \\
   & \lesssim  \left\|\left\langle i(\tau+\xi^3) +\xi^2\right\rangle^{b} \int_{\mathbb R}\frac{|\widehat{\varphi}(\tau-\lambda^3)| |\lambda|^{2+n+
m}}{|\lambda|^m+|\xi|^m} |\widetilde{h}(i\lambda^3)|d\lambda \right\|_{L_{\xi}^2 L_\tau^2}.
\end{align}
We partition the domain of integration into two disjoint subsets:  for any $m\in \N$, let $D_1^{{m}} = \{(\xi,\lambda) : |\lambda|^m + |\xi|^m > 1\}$ and $D_2^{{m}} = \{(\xi,\lambda) : |\lambda|^m + |\xi|^m \leq 1\}$.  Let $I_1$ and $I_2$ denote the integrals of $D_1^{{m}}$ and $D_2^{{m}}$, respectively. To estimate $I_1$, we observe that
 $\left|\widehat{\varphi}\left(\tau-\lambda^3\right)\right| \lesssim\left\langle\tau-\lambda^3\right\rangle^{-2},$ $ \left\langle i(\tau+\xi^3) +\xi^2\right\rangle \lesssim \langle \xi\rangle^2\left\langle\tau-\lambda^3\right\rangle\left\langle\lambda+\xi\right\rangle^3\lesssim \left\langle\tau-\lambda^3\right\rangle\left\langle|\lambda|+|\xi|\right\rangle^3$, and $|\lambda|^m+|\xi|^m \sim\left\langle |\lambda|^m+|\xi|^m\right\rangle$. Thus we have the bound
  \begin{align*}
     I_1&\lesssim \left\| \int_{\mathbb R}\frac{|\lambda|^{2+n+
m}}{\langle \tau-\lambda^3\rangle^{2-b}(|\lambda|+|\xi|)^{m-3b}}|\widetilde{h}(i\lambda^3)|d\lambda \right\|_{L_{\xi}^2 L_\tau^2}\\
&\lesssim  \left\| \int_{\mathbb R}\frac{|\lambda|^{2+n+
m}}{\langle \tau-\lambda^3\rangle^{2-b}}\left\|\frac{1}{(|\lambda|+|\xi|)^{m-3b}}\right\|_{L_\xi^2}|\widetilde{h}(i\lambda^3)|d\lambda \right\|_{ L_\tau^2}.
 \end{align*}
 Note that 
 $$\left\|\frac{1}{(|\lambda|+|\xi|)^{m-3b}}\right\|_{L_\xi^2}\lesssim |\lambda|^{-(m-3b-\frac{1}{2})},$$ since $b\in (0,\frac{1}{18})$ this implies 
 \begin{align*}
     I_1& \lesssim  \left\| \int_{\mathbb R}\frac{|\lambda|^{\frac{5}{2}+n+3b}}{\langle \tau-\lambda^3\rangle^{2-b}}|\widetilde{h}(i\lambda^3)|d\lambda \right\|_{ L_\tau^2} \lesssim  \left\| \int_{\mathbb R}\frac{|w|^{\frac{1+n}{3}}}{\langle \tau-w\rangle^{\frac{3}{2}}}|\widetilde{h}(iw)|dw \right\|_{ L_\tau^2},
 \end{align*}
 using  Young’s inequality, we have  
 $$I_1\lesssim \left\|\langle \tau \rangle^{-\frac{3}{2}}\right\|_{L_{{\tau}}^1}\left\||\tau|^{\frac{1+n}{3}}\widehat{h}(\tau)\right\|_{L^2_{\tau}} \lesssim\left\|\chi_{(0, \infty)} h\right\|_{ H^{\frac{1+n}{3}}(\mathbb{R})}.$$
In the latter case, using Minkowski’s inequality we get
\begin{align*}
    I_2&\lesssim \left\|\langle\tau\rangle^{\frac{1}{2}} \int_0^1\langle\tau\rangle^{-2} \frac{\lambda^{2+n+m}}{|\lambda|^m+|\xi|^m}\left|\widehat{h}\left(i \lambda^3\right)\right| d \lambda\right\|_{ L_{|\xi| \leq 1}^2   L_\tau^2}
    \lesssim  \int_0^1\left\| \frac{\lambda^{m}}{|\lambda|^m+|\xi|^m}\right\|_{ L_{|\xi| \leq 1}^2 }\lambda^{2+n} \left|\widehat{h}\left(i \lambda^3\right)\right| d \lambda\\
    &\lesssim \int_0^1
    |\lambda|^{1+n} \left|\widehat{h}\left(i \lambda^3\right)\right| d \lambda\lesssim \left\|\chi_{(0, \infty)} h\right\|_{ H^{\frac{1+n}{3}}(\mathbb{R})}.
\end{align*}
This completes the proof of inequality \eqref{3.22}. Inequality \eqref{3.23} can be obtained similarly; for this reason, we omit the details.
\end{proof}

\subsubsection{\bf Initial value problem in $\R^+$}
	For the linear problem with zero boundary conditions and non-homogeneous initial data.
		\begin{equation} \label{semi}                   \begin{cases}u_{t}-u_{xxx}-u_{xx}=0,\ &t>0, \ x>0,\\
			u(x,0)=u_0, \ &x>0,\\
			u(0,t)=0, \ u_x(0,t)=0, \ &t > 0, 
		\end{cases}
	\end{equation}
	
the solution of \eqref{semi} will be denoted by 
\begin{gather}\label{semisol}
 u(x,t)= W_{0}(t) u_0(x),
\end{gather}
	
with  $W_0$ the  $\mathbb{C}_0$-semigroup in the space $L^2(\R^+)$ generated by the operator $Av=v''' + v''$ with domain $\mathcal{D}(A):= \{ v\in H^3(\R^+) : v(0)=0, v'(0)=0\}$.\\

 If we consider $\Bar{u}$ the solution of Real extension of the initial value problem \eqref{semi}  by 
	\begin{equation} \label{semiExtention} 
	\begin{cases}
	\Bar{u}_{t}-\Bar{u}_{xxx}-\Bar{u}_{xx}=0,\ &t>0, \ x\in \R,\\
	\Bar{u}(x,0)=\Bar{u}_0(x),& x\in \R,
    \end{cases}
	\end{equation}
this solution will be given by $\Bar{u}(x,t)= W_{\R}(t)\Bar{u}_0(x)$. And from the operator $W_{\R}$ in \eqref{WR} and the boundary operator $W_{b}$ in \eqref{Wb} the solution of \eqref{lineal} can be written as 
		$$u(x,t)= W_{0}(t)u_0(x)= W_{\R}(t)\Bar{u}_0(x)- W_{bdr}(t)(\tilde{h},\tilde{g}), $$
     with    $\tilde{h}(t)=\Bar{u}(0,t)$ and $\tilde{g}(t)=\Bar{u}_x(0,t)$ are the traces of the solution on the hole line.

\section{Well-posedness of the non-linear KdV-B}
In this section, we study the well-posedness of the IBVP \eqref{1.1}. A proof of Theorem \ref{wellposedness} is provided as follows:
\begin{proof}
  Let $u_0 \in H^s\left(\mathbb R^{+}\right)$, $h \in H_{\text {Loc }}^{\frac{1+s}{3}}\left(\mathbb R^{+}\right)$ and $g\in H_{\text {Loc }}^{\frac{s}{3}}\left(\mathbb R^{+}\right) $ be given with $s \in[0,2-3b)$.  
   We denote by  $W_{\mathbb R}$  the linear operator on the hole line for the problem \eqref{1.2}, by $u_0^*$ a real extension of $u_0$  such that $\|u_0^*\|_{H^s(\mathbb R)}\leq \|u_0\|_{ H^s (\mathbb R^+)}$. \\
For $T>0,$ $b\in (0,\frac{1}{18})$ and  $\varphi \in C^{\infty}_0(\mathbb R)$, let us define the operator $$\mathbf{\Gamma}: \mathcal{V}_{s, b}(\R^+\times (0,T)) \to \mathcal{V}_{s, b}(\R^+\times (0,T)),$$  
    such that 
\begin{multline}\label{nlsol}
\mathbf{\Gamma}(w):=\varphi(t)W_{\mathbb R}(t) {u}^*_0(x)\\+\varphi(t) \mathcal{W}_{b d r}(t) [h-p_1-q_1,g-p_2-q_2]+\varphi(t)\int_0^t \mathcal{W}_\mathbb R(t-\tau)f(w)(x,\tau) d \tau
\end{multline}
with $f(w)=ww_x$,
\begin{align*} 
    p_1(t)&=D_0\left(W_{\mathbb{R}} u^*_0\right), 
    &   p_2(t)&=D_0\left(\partial_x W_{\mathbb{R}} u^*_0\right),\\
    q_1(t)&=D_0\left(\int_0^t W_{\mathbb{R}}\left(t-t^{\prime}\right) f(w) d t^{\prime}\right),
    & q_2(t)&=D_0\left(\int_0^t \partial_x W_{\mathbb{R}}\left(t-t^{\prime}\right) f(w) d t^{\prime}\right) .
\end{align*}
By Proposition \ref{RlinearID}, 
$$
\left\|\varphi(t)W_{\mathbb R}(t) u_0 \right\|_{\mathcal{V}_{s, b}} \lesssim\left\|{u}^*_0\right\|_{H^s(\mathbb R)} \lesssim\|u_0\|_{H^s\left(\mathbb{R}^{+}\right)},
$$
from Propositions \ref{RlinearForcing},\, \ref{Bourgain estimate prop}  and \ref{bilinearestimate}, there exists $0<\mu<1$ such that
$$\left\|\varphi(t) \int_0^t W_{\mathbb{R}}\left(t-t^{\prime}\right) f(u) d t^{\prime}\right\|_{\mathcal{V}_{s, b}} \lesssim\|f(u)\|_{\mathcal{V}_{s,-b}} \lesssim T^{\mu }\left\|u\right\|^2_{\mathcal{V}_{s,b}},$$
and regarding the boundary operator, 
$$\mathcal{W}_{b d r}(t) [h-p_1-q_1,g-p_2-q_2]= W_D(t)[h-p_1-q_1]+W_N(t)[g-p_2-q_2].$$
Additionally,  Propositions \eqref{RlinearIDRegularity}, \eqref{boudnaryoperators1} and \eqref{boundary operator}, imply
$$\|\varphi(t)W_D(t)[h-p_1-q_1]\|_{\mathcal{V}_{s,b}}\lesssim \|\chi_{(0,\infty)}(h-p_1-q_1)\|_{\h^{\frac{s+1}{3}}(\mathbb R)} \lesssim \left(\|u_0\|_{H^s\left(\mathbb R^{+}\right)}+\|h\|_{H^{\frac{s+1}{3}}(0,T)}\right)$$
and
$$\|\varphi(t)W_N(t)[g-p_2-q_2]\|_{\mathcal{V}^{s,b}}\lesssim \|\chi_{(0,\infty)}(g-p_2-q_2)\|_{\h^{\frac{s}{3}}(\mathbb R)} \lesssim \left(\|u_0\|_{H^s\left(\mathbb R^{+}\right)}+\|g\|_{H^{\frac{s}{3}}(0,T)}\right).$$

The operator $\mathbf{\Gamma}:B_r\to B_r$ is a contraction for  a suitable $r$, in effect, for $w \in B_{r}$ combining the estimates and selecting 
$$\|\mathbf{\Gamma} w\|_{\mathcal{V}^{s,b}} \lesssim \|u_0\|_{H^s\left(\mathbb{R}^{+}\right)}+\|h\|_{H^{\frac{ s+1}{3}}(0,T)}+ \|g\|_{H^{\frac{s}{3}}(0, T)}+T^{\mu}\|w\|_{\mathcal{V}^{s,b}}^2 \le r \delta+ T^{\mu} r^2< r (\delta +r T^{\mu})<r, $$
and for $w, v \in B_r$,
$$
\|\mathbf{\Gamma}(v) -\mathbf{\Gamma} (w)\|_{\mathcal{V}^{s,b}} \lesssim T^\mu\|v-w\|_{\mathcal{V}^{s,b}}\|v+w\|_{\mathcal{V}^{s,b}}.
$$
and, so
$$
\|\mathbf{\Gamma}(w)-\mathbf{\Gamma}(v)\|_{\mathcal{V}_{s,b}}\left(\mathbb R^{+} \times(0, T)\right) \leqslant 2 \delta r T^{\mu} \|w-v\|_{\chi^{s, \frac{1}{2}}\left(\mathbb R^{+} \times(0, T)\right)}.
$$

selecting $\delta>0$ and $0<\mu< 1$ such that $ \delta + r T^{\mu}<1$  and  $ 2 \delta r T^{\mu}<1$ the function $\mathbf{\Gamma}$ is a contraction mapping of the ball $B_r$. This implies the existence of a fixed point $u$ in $\mathcal{V}_{s,b}$, which is the desired solution.

Next, suppose that $u_0 \in H^3\left(\mathbb R^{+}\right)$, $h \in H_{\text {Loc }}^{\frac{4}{3}}\left(\mathbb R^{+}\right)$ and $g\in H_{\text {Loc }}^{1}\left(\mathbb R^{+}\right)$ such that 
$$ \|u_0\|_{H^s\left(\mathbb{R}^{+}\right)}+\|h\|_{H^{\frac{ s+1}{3}}(0,T)}+ \|g\|_{H^{\frac{s}{3}}(0, T)} \leq r.$$ By applying the previously established result, we know that there exist  a  solution$u\in \mathcal{V}_{1, b}(\R^+\times (0,T))$  to the problem \eqref{1.2}. Let $v = u_t$. Then $v$ solves also the IBVP \eqref{1.2} but with the following initial and boundary conditions:
$$v(x,0)=u_t(x,0)\in H^0(\mathbb R^+), \ \ v(x,0)=h'(t)\in H^{\frac{1}{3}}(\mathbb R^+), \ v_t(0,t)=g'(t)\in L^2(\mathbb R^+),$$ this problem has a unique solution $v\in \mathcal{V}_{0, b}(\R^+\times (0,T)) $, thus we have $v=u_t\in \mathcal{V}_{0, b}(\R^+\times (0,T)) $ therefore $u \in  \mathcal{V}_{3, b}(\R^+\times (0,T)) $.

By non-linear interpolation theory, we conclude that for  $u_0 \in H^s\left(\mathbb R^{+}\right)$, $h \in H_{\text {Loc }}^{\frac{s+1}{3}}\left(\mathbb R^{+}\right)$ and $g\in H_{\text {Loc }}^{\frac{s}{3}}\left(\mathbb R^{+}\right)$, where  $s\in(0,3)$,   the corresponding solution $u\in C((0,T); H^s(\mathbb R^+))$. The proof of the remaining case for s is similar and therefore omitted.
\end{proof}

\section{Carleman's Estimate}

A global Carleman's estimate is computed for the KdV-Burger equation on $\SLT:= [-L,L]\times [0,T] $ for $T>0$ and $L>0$. Let us define the set 

\begin{gather}\label{Uset}{\Upsilon}:= \Big\{ q \in C^3([0,T]);\ q(\pm L,t)=q_x(\pm L,t)=q_{xx}(\pm L,t)=0, \quad \text{for} \ 0\le t \le T\Big\}.\end{gather}

The following  Carleman's estimate is true for the KdV-Burger equation  

\begin{proposition} There exist a smooth  positive function $\psi$ defined on $[-L,L]\times[0,T]$, a constant  $s_0=s_0(L,T)$ and $C=C(L,T)$ such that for any $s\ge s_0$ and any $q\in \Upsilon$
\begin{multline}\label{Carleman}
 \int\limits_{\SLT} \left( \displaystyle\frac{s^5  }{t^5(T-t)^5} q^2+ \displaystyle\frac{ s^3   }{t^3 (T-t)^3} (q_x)^2 +  \frac{ s}{t (T-t)} (q_{xx})^2 \right)e^{-2s \psi} dxdt \le \\ \int\limits_{\SLT}  C ( q_t- q_{xx}-q_{xxx})^2 e^{-2s \psi}dxdt 
\end{multline}
\end{proposition}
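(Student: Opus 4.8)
\emph{Proof strategy.} The plan is to run a classical conjugated-operator (Carleman) argument. First I fix the singular weight in separated form,
$$
\psi(x,t)=\frac{\phi(x)}{t(T-t)},\qquad \phi(x)=A-(x+B)^2,
$$
with $B>L$ and $A>(L+B)^2$, so that on $[-L,L]$ the function $\phi$ is smooth and strictly positive, $\phi'(x)=-2(x+B)$ is bounded away from $0$, and $\phi''\equiv-2<0$; note also $0<t(T-t)\le T^2/4$. For $q\in\Upsilon$ set $w:=e^{-s\psi}q$, so that $q=e^{s\psi}w$. Because $\psi\to+\infty$ as $t\to 0^+$ and $t\to T^-$, the function $w$ and all of the derivatives of $w$ that occur below vanish on the whole boundary $\partial\square_L$ (the time endpoints $t\in\{0,T\}$ by the blow-up of $\psi$, the lateral sides $x=\pm L$ by the definition of $\Upsilon$), so every integration by parts carried out below is free of boundary contributions.

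Writing $\Phi:=s\psi$ and expanding $e^{-\Phi}P(e^{\Phi}w)$, where $P=\partial_t-\partial_x^3-\partial_x^2$, produces a third-order operator in $w$ with coefficients polynomial in the derivatives of $\Phi$. I split it as $e^{-s\psi}Pq=P_+w+P_-w$, where $P_-w$ gathers the formally skew-adjoint terms (namely $w_t$, $-w_{xxx}$, and the skew-symmetrizations of the first-order terms) and $P_+w$ the formally self-adjoint ones (the term $-(1+3\Phi_x)w_{xx}$, the symmetric first-order remainder, and all the zeroth-order terms, in particular $-\Phi_x^3w$). Then
$$
\int_{\square_L}e^{-2s\psi}(Pq)^2\,dx\,dt=\|P_+w\|_{L^2(\square_L)}^2+\|P_-w\|_{L^2(\square_L)}^2+2\,(P_+w,P_-w)_{L^2(\square_L)}\ \ge\ 2\,(P_+w,P_-w)_{L^2(\square_L)}.
$$
The heart of the matter is to expand $2(P_+w,P_-w)$ by repeated integration by parts and collect it as $\int_{\square_L}\big(\alpha\,w^2+\beta\,w_x^2+\gamma\,w_{xx}^2\big)\,dx\,dt$ plus lower-order remainders. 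The leading parts of the coefficients are, up to fixed positive constants, $\alpha\simeq -s^5\,\phi'^4\phi''/(t(T-t))^5$ (from pairing the term $-\Phi_x^3w$ of $P_+$ with the leading first-order term of $P_-$, together with the zeroth-order--zeroth-order pairing), $\beta\simeq -s^3\,\phi'^2\phi''/(t(T-t))^3$ (from $(-3\Phi_xw_{xx},-3\Phi_x^2w_x)$ and $(-3\Phi_{xx}w_x,-w_{xxx})$), and $\gamma\simeq -s\,\phi''/(t(T-t))$ (from $(-3\Phi_xw_{xx},-w_{xxx})$ and $(-3\Phi_{xx}w_x,-w_{xxx})$). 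Since $\phi''<0$ and $\phi'$ is bounded away from $0$ on $[-L,L]$, each of $\alpha,\beta,\gamma$ is bounded below by $c\,s^5/(t(T-t))^5$, $c\,s^3/(t(T-t))^3$, $c\,s/(t(T-t))$ respectively, with $c=c(L,T)>0$.

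For $s\ge s_0(L,T)$ I then absorb all remainder terms into these three dominant ones. Every remainder is strictly subordinate in the hierarchy in which dropping an $x$-derivative costs a factor $\Phi_x^2\sim s^2/(t(T-t))^2$ while applying a $t$-derivative to a coefficient costs at most a factor $\sim s/(t(T-t))$; combined with $0<t(T-t)\le T^2/4$ this makes each remainder at most a small multiple of $\alpha w^2$, $\beta w_x^2$ or $\gamma w_{xx}^2$ once $s$ is large. The only remainders still carrying a factor $w_t$ (coming from $(P_+w,w_t)$) are treated first by one integration by parts in $t$, after which the $w_t$'s cancel in pairs and only controlled $\partial_t$-of-coefficient terms survive. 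This gives
$$
\int_{\square_L}\!\Big(\frac{s^5}{t^5(T-t)^5}w^2+\frac{s^3}{t^3(T-t)^3}w_x^2+\frac{s}{t(T-t)}w_{xx}^2\Big)\,dx\,dt\ \le\ C\!\int_{\square_L}\! e^{-2s\psi}(Pq)^2\,dx\,dt .
$$
It remains to undo the conjugation: from $q_xe^{-s\psi}=w_x+s\psi_xw$ and $q_{xx}e^{-s\psi}=w_{xx}+2s\psi_xw_x+(s\psi_{xx}+s^2\psi_x^2)w$, together with $|\psi_x|+|\psi_{xx}|\lesssim 1/(t(T-t))$, one checks that each of $\frac{s^5}{(t(T-t))^5}q^2e^{-2s\psi}$, $\frac{s^3}{(t(T-t))^3}q_x^2e^{-2s\psi}$ and $\frac{s}{t(T-t)}q_{xx}^2e^{-2s\psi}$ is dominated pointwise by a fixed multiple of the $w$-integrand of the last display --- precisely because the weights attached to $q^2$, $q_x^2$, $q_{xx}^2$ in \eqref{Carleman} are tuned to differ from one another by exactly the factor $s^2/(t(T-t))^2$ produced by conjugation. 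Combining the two displays yields \eqref{Carleman}.

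The genuinely delicate step is the bookkeeping of the cross term $2(P_+w,P_-w)$: one must verify that the several terms of top order $s^5$, $s^3$, $s$ generated by the first-order part of $P_-$ and by the zeroth-order part of $P_+$ combine with the correct sign into $\alpha,\beta,\gamma$ (this is exactly what forces $\phi$ to be concave with non-vanishing gradient), and one must arrange the $\partial_t$- and $\Phi_t$-produced terms so that they remain strictly subordinate. This is where the precise exponents $5,3,1$ of $s$ and of $1/(t(T-t))$ in \eqref{Carleman} are dictated.
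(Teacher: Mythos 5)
Your proposal is correct and follows essentially the same route as the paper: conjugate with the singular weight $\psi=\phi(x)/(t(T-t))$, split the conjugated operator into two pieces, bound the cross term from below by expanding it through integration by parts (with no boundary contributions thanks to $q\in\Upsilon$ and the blow-up of $\psi$ at $t=0,T$), and use $\phi''<0$ together with $\phi'$ bounded away from zero to make the leading coefficients of orders $s^5/(t(T-t))^5$, $s^3/(t(T-t))^3$ and $s/(t(T-t))$ positive --- exactly the paper's coefficients $D,E,F$ with leading parts $-15\phi'^4\phi''$, $-9\phi'^2\phi''$, $-9\phi''$. The only (harmless) difference is your choice $\phi(x)=A-(x+B)^2$, which is genuinely positive on $[-L,L]$ and thus consistent both with the statement that $\psi$ is positive and with the vanishing of $w=e^{-s\psi}q$ at $t=0,T$, whereas the paper's $\phi(x)=-x^2-(2L+\tfrac32 T)x$ changes sign on $[-L,L]$.
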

\begin{proof}
For $s>0$,  $\phi=\phi(x)\in \mathcal{C}^3([-L,L])$, $q\in \Upsilon $ and  $\psi(x,t):= \displaystyle\frac{\phi(x)}{t(T-t)}$.  Consider 
    \begin{gather*}
    u:=  e^{-s \psi}q \quad \text{and} \quad  w:= e^{-s \psi} P(e^{s\psi}u)
    \end{gather*}
when $P=\partial_t - \partial_{xxx} - \partial_{xx}$ is the linear  operator. By direct computations, we obtain that
\begin{multline*}
    P[e^{s \psi}u]= e^{s \psi} \left\{ (u_t- u_{xx}-u_{xxx}) \right.\\+  \left(s \left[\psi_t- \psi_{xx}- \psi_{xxx}\right] -s^2\left[(\psi_x)^2 +3 \psi_x \psi_{xx}\right]-s^3 \left[(\psi_x)^3 + 3 \psi_x \psi_{xx}\right]\right)u  \\ +\left[-s\left(2\psi_x+ 3\psi_{xx}\right) -3s^2(\psi_x)^2\right]u_x+ 
    - 3s \psi_{x}  u_{xx}
    \left. \right\}
\end{multline*}
and therefore  \begin{gather}\label{w}
w= Au +Bu_x + Cu_{xx}-u_{xxx}+u_t
\end{gather} considering  \begin{eqnarray*}
    A&=& s \left[\psi_t- \psi_{xx}- \psi_{xxx}\right] -s^2\left[(\psi_x)^2 +3 \psi_x \psi_{xx}\right]-s^3 \left[(\psi_x)^3 + 3 \psi_x \psi_{xx}\right], \\
    B&=& -s\left(2\psi_x+ 3\psi_{xx}\right) -3s^2(\psi_x)^2, \\
    C&=&  -1 - 3s \psi_{x}. 
\end{eqnarray*}
Let us call  \begin{gather}\label{Ms}
    M_1(u):= u_t - u_{xxx} + Bu_x \quad \text{and} \quad
    M_2(u):= Au + C u_{xx},
\end{gather}
it is clear that 
\begin{equation}\label{inew}
2 \int\limits_{\SLT} M_1(u)M_2(u)dxdt\le \int\limits_{\SLT} \Big(M_1(u)+M_2(u)\Big)^2dxdt = \int\limits_{\SLT} w^2 dxdt,
\end{equation}
the left-hand side of \eqref{inew} can be compute as 
$\int\limits_{\SLT} M_1(u)M_2(u)dxdt = I + II$
with 
\begin{eqnarray*}
    I&=& \int\limits_{\SLT} M_1(u) Au dxdt=\int\limits_{\SLT} (u_t - u_{xxx} + Bu_x) Au dxdt\\
    &=& \frac12\int\limits_{\SLT} \left((-A_t  +  A_{xxx} - (AB)_x)u^2 \right) dxdt - \frac32\int\limits_{\SLT} A_x(u_x)^2 dxdt,\\
    II&=& \int\limits_{\SLT} M_1(u)C u_{xx}    dxdt\\
    &=&  \int\limits_{\SLT} \left( u_t - u_{xxx} + Bu_x \right) Cu_{xx}dxdt= II_1 + II_2+II_3
\end{eqnarray*}
with 
\begin{eqnarray*}
    II_1&=& 
    \int\limits_{\SLT} C u_t u_{xx} dxdt= -\int\limits_{\SLT} (C u_t)_x u_{x} dxdt \\
    &=& - \int\limits_{\SLT}  \frac12 C \frac{d}{dt}(u_x)^2dxdt - \int\limits_{\SLT}  C_x (w-Au -Bu_x - Cu_{xx}+u_{xxx} ) u_x dt dx\\ 
     &=&   \int\limits_{\SLT}  \left(\frac12 C_t (u_x)^2 -  C_xwu_x -\frac12 (C_xA)_x (u^2)
     +C_xB(u_x)^2 -\frac12 (C_xC)_x(u_{x})^2-\frac12 C_{xxx}(u_x)^2 \right. \\ && \hspace{2cm} +C_x(u_{xx})^2\Big) dxdt \\
      &=&   \int\limits_{\SLT}  \left(   -\frac12 (C_xA)_x u^2+ 
      \left(\frac12 C_t  
     +C_xB -\frac12 (C_xC)_x-\frac12 C_{xxx}\right)(u_x)^2 +C_x(u_{xx})^2 -  C_xwu_x \right) dxdt, \\
     II_2&=&  \int\limits_{\SLT} \frac{1}{2} C_x(u_{xx})^2 dxdt,\\
     II_3&=&   \int\limits_{\SLT} BCu_xu_{xx}dxdt= - \int\limits_{\SLT} \frac12 (BC)_x (u_x)^2 dxdt.
\end{eqnarray*}

Therefore,  putting everything together 

\begin{multline}
   2  \int\limits_{\SLT}  M_1(u)M_2(u)dx dt  =\\
   \int\limits_{\SLT} \left(- A_t  +  A_{xxx} - (AB)_x   - (C_xA)_x   \right)u^2 +\\
      \left(   
      \left( C_t  
     +2C_xB - (C_xC)_x- C_{xxx} -  (BC)_x\right)(u_x)^2 + 3 C_x  (u_{xx})^2 -  2 C_xwu_x \right) dxdt \\ \le  \int\limits_{\SLT} w^2 dxdt
\end{multline}
and  putting together the terms of $w$,  $$ \int\limits_{\SLT} \left(2c_xw u_x + w^2\right)dxdt \le  \epsilon\int\limits_{\SLT}    C_x^2(u_x)^2 dxdt + \left(\frac{1}{\epsilon}+ 1 \right) \int\limits_{\SLT} w^2 dxdt.  $$

If we set 
\begin{eqnarray}\label{DEF}
    D&=& -A_t  + A_{xxx} - (AB)_x   - (C_xA)_x ,  \notag\\
    E&=&   C_t  
     +2C_xB - (C_xC)_x- C_{xxx} -  (BC)_x - \epsilon C_x^2, \notag\\
    F&=&   3 C_x,
\end{eqnarray}
then 
 \begin{equation}\label{carleman1}
 \int\limits_{\SLT} \left(D u^2
      +   
      E(u_x)^2 +   F(u_{xx})^2 \right) dxdt \le   \left(\frac{1}{\epsilon}+ 1 \right)\int\int\limits_{\SLT} w^2 dxdt.
\end{equation}

By direct computations, 
\begin{eqnarray*}
    D &=& - 15 \phi'(x)^4 \phi''(x) \frac{s^5}{t^5(T-t)^5}+ \displaystyle\frac{1}{t^4(T-t)^4}  \mathcal{O}(s^4),\\
     E&=&-9\phi '(x)^2 \phi ''(x)\frac{ s^3 }{t^3 (T-t)^3}+ \frac{1}{t^4(T-t)^4}  \mathcal{O}(s^2),\\
     F&=&-9  \frac{ \phi ''(x)}{t (T-t)},
\end{eqnarray*}
 there exist $C_D,C_E$ and $C_F$ positive constants such that
 if  $\phi''<0$ for $x\in [-L,L]$, we obtain 
 \begin{eqnarray*}
 D &\ge&   - 15 \phi'(x)^4 \phi''(x) \displaystyle\frac{s^5 C_D }{t^5(T-t)^5},\\
E&\ge&  -9\phi '(x)^2 \phi ''(x) \displaystyle\frac{ s^3 C_E  }{t^3 (T-t)^3},\\
 F &\ge& C_F  \frac{ s}{t (T-t)},
\end{eqnarray*}
 for $s$ large enough, when it is imposed that   $\phi''<0$ for $x\in [-L,L]$, the expressions $D,E$ and $F$, in \eqref{DEF}, will be positives $x\in [-L,L]$ and $t\in [0,T]$. 
 Therefore, for $x\in[-L,L]$ we select  the function $$\phi(x)= -x^2 - (2L+\frac32 T)x.$$

From \eqref{carleman1}, for $s$ large enough, we have for $C=C(L)$
\begin{equation}\label{carleman3}
\int\limits_{\SLT} \left( \displaystyle\frac{s^5  }{t^5(T-t)^5} u^2+ \displaystyle\frac{ s^3   }{t^3 (T-t)^3} (u_x)^2 +  \frac{ s}{t (T-t)} (u_{xx})^2 \right) dxdt \le   C \int\limits_{\SLT} w^2 dxdt.
\end{equation}
 Returning to the $q$ function,  we have that since $u=e^{s \psi}q$, $u_x=\left(-s \psi_x  q+ q_x\right)e^{-s \psi}$ and $u_{xx}= \left( \left(s^2 (\psi_x)^2-s \psi_{xx}\right)q - 2s\psi_x  q_x + q_{xx}\right)e^{-s \psi}$
for $s$ large enough we have that the higher terms  are 

\begin{eqnarray*}    
\int\limits_{\SLT} && ( q_t- q_{xx}-q_{xxx})^2 e^{-2s \psi}dxdt \ge \int\limits_{\SLT}  (Au +Bu_x + Cu_{xx}-u_{xxx}+u_t)^2 dxdt \ge\\ &&
\ge  C
\int\limits_{\SLT} \left( \displaystyle\frac{s^5  }{t^5(T-t)^5} q^2+ \displaystyle\frac{ s^3   }{t^3 (T-t)^3} (q_x)^2 +  \frac{ s}{t (T-t)} (q_{xx})^2 \right)e^{-2s \psi} dxdt
\end{eqnarray*}
with $C:=C(L,T).$
\end{proof}

\section{\bf Controllability}

In this section, we address the exact boundary controllability problem of the KdV-B equation.  Which is based on answering the question:\\

Is it possible to find  functions $h$ and $g$ in suitable spaces, such that for a giving time $T>0$, an initial state $u_0 \in L^2(\R)$ and a final state $u_T\in L^2(\R)$ the system 
\begin{equation}\label{dfcontrol}
    \begin{cases}
        u_t-u_{xxx}-u_{xx}=0, & 0\le t\le T, x>0\\
        u(0,t)=h(t), & 0\le t\le T\\
        u_x(0,t)=g(t), & 0\le t\le T\\
        u(x,0)=u_0
    \end{cases}
\end{equation}
has a solution such that $u(x,T)=u_T?$\\

The definition of approximate controllability is enough to ensure that the solution of \eqref{dfcontrol} satisfies that for some $\epsilon>0$, $\| u(x,T)-u_T\|< \epsilon$.  A general result for the approximate controllability  is stated in the following  proposition \cite[Proposition 1]{Rosierub}:

\begin{proposition}\label{aproxcontrol}
    Consider the operator 
    $$Au=\sum_{i=0}^n a_i \frac{d^i u}{dx^i},$$
    with $a_i\in\R$ for $i=0,1,\ldots,n$, $n\ge 2, a_n\neq 0$ and domain $\D(A):= \{ u\in L^2(\R): Au \in L^2(\R)\} $ such that $A$ generates a continuous semigroup $\{S(t)\}_{t\ge0}$ on $L^2(\R)$. Taking $T>0$ and $L_1 < L_2$ the set 
    $$\mathfrak{R}:=\left\{\int_0^T S(T-t)f(t,\cdot) dt : f\in L^2(\R^2), \text{supp} \, f \subset [L_1,L_2]\times [0,T] \right\}$$ is a strict dense subspace of $L^2(\R).$
\end{proposition}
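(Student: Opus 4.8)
The plan is to handle $\mathfrak R$ by duality, so that the proof splits into a density part and a strictness part. That $\mathfrak R$ is a linear subspace is immediate: it is the range of the bounded operator $\Lambda\colon L^2([L_1,L_2]\times[0,T])\to L^2(\R)$, $\Lambda f=\int_0^T S(T-t)\,\widetilde f(t,\cdot)\,dt$ (with $\widetilde f$ the extension of $f$ by $0$ in $x$), the estimate $\|\Lambda f\|\le Me^{\omega T}\sqrt{T}\,\|f\|$ coming at once from $\|S(t)\|\le Me^{\omega t}$. So only density and strictness require an argument.

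For density I would invoke Hahn--Banach: it suffices to show that any $\varphi\in L^2(\R)$ orthogonal to $\mathfrak R$ vanishes. Bringing the inner product inside the $t$-integral and transposing $S(T-t)$, orthogonality becomes $\int_0^T\langle S(T-t)^*\varphi,f(t,\cdot)\rangle\,dt=0$ for every $f$ supported in $[L_1,L_2]\times[0,T]$; since such $f$ are dense in $L^2([L_1,L_2]\times[0,T])$, the space--time function $(t,x)\mapsto(S(T-t)^*\varphi)(x)$ vanishes a.e.\ on $[0,T]\times[L_1,L_2]$. Setting $z(s):=S(s)^*\varphi$ and using strong continuity of the adjoint semigroup, this says that $z\in C([0,\infty);L^2(\R))$, whose partial Fourier transform in $x$ is $\widehat{z(s)}(\xi)=e^{s\,p(-i\xi)}\widehat\varphi(\xi)$ with $p(\zeta)=\sum_{i=0}^na_i\zeta^i$ (equivalently, $z$ solves the constant-coefficient adjoint equation $z_t=\sum_{i=0}^n(-1)^ia_iz^{(i)}$), and that $z$ vanishes on the box $(0,T)\times(L_1,L_2)$. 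The whole matter then reduces to the unique continuation statement ``such $z$ must vanish identically'', because then $\varphi=z(0)=\lim_{s\to0^+}z(s)=0$.

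This unique continuation step is the one real obstacle, and it is exactly where the hypotheses $n\ge2$, $a_n\ne0$ enter (for $n=1$, pure transport, the statement is false). In the case of interest here, $A=\partial_x^3+\partial_x^2$, it is short: then $p(-i\xi)=i\xi^3-\xi^2$, so for each $t>0$ the Gaussian factor $e^{-t\xi^2}$ makes $\widehat{z(t)}(\xi)$ decay faster than every exponential, hence $z(t,\cdot)$ extends to an \emph{entire} function of $x$; vanishing on the nonempty interval $(L_1,L_2)$ then forces $z(t,\cdot)\equiv0$ for every $t\in(0,T)$, so that $\varphi=\lim_{t\to0^+}z(t)=0$. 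For a general operator of the stated form one instead invokes the unique continuation property for the evolution $z_t=A^*z$ (a Carleman-type statement, in the spirit of the estimate established in Section~5 for the associated third-order operator), which yields the same conclusion.

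Finally, strictness of the inclusion $\mathfrak R\subsetneq L^2(\R)$ I would prove by contradiction. If $\mathfrak R=L^2(\R)$, then $\Lambda$ is onto, hence $\Lambda^*$ is bounded below, i.e.\ the observability inequality $\int_0^T\|S(s)^*\varphi\|_{L^2(L_1,L_2)}^2\,ds\ge c^2\|\varphi\|_{L^2(\R)}^2$ holds for all $\varphi$. But $S(s)^*$ is a Fourier multiplier, hence commutes with spatial translations, so evaluating on $\varphi(\cdot-k)$ turns the left-hand side into $\int_0^T\|S(s)^*\varphi\|_{L^2(L_1-k,L_2-k)}^2\,ds$; since the integrand is dominated by $M^2e^{2\omega T}\|\varphi\|^2$ and the windows $(L_1-k,L_2-k)$ escape to $-\infty$ as $k\to+\infty$, this tends to $0$, while the right-hand side stays $c^2\|\varphi\|^2>0$ --- a contradiction. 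Hence $\mathfrak R$ is a proper, and (by the previous step) dense, subspace of $L^2(\R)$.
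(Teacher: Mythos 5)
The paper does not actually prove this proposition: it is quoted verbatim from Rosier and used as a black box (the text introduces it as ``\cite[Proposition 1]{Rosierub}'' and immediately applies it), so there is no in-paper argument to compare against. Judged on its own merits, your proof is essentially correct and follows the same duality strategy as Rosier's original. The density half (Hahn--Banach, transpose the semigroup, reduce to unique continuation for $z(s)=S(s)^*\varphi$ vanishing on $(L_1,L_2)\times(0,T)$, then recover $\varphi=z(0)$ by strong continuity) is sound, and your treatment of the operator the paper actually needs, $A=\partial_x^3+\partial_x^2$, is clean: the factor $e^{-s\xi^2}$ in the adjoint multiplier makes $\widehat{z(s)}$ integrable against every $e^{a|\xi|}$, so $z(s,\cdot)$ is entire and the identity theorem finishes it. The strictness half --- surjectivity of $\Lambda$ would force $\|\Lambda^*\varphi\|\ge c\|\varphi\|$ by the open mapping theorem, which translation invariance of the constant-coefficient multiplier $S(s)^*$ visibly violates as the window $(L_1-k,L_2-k)$ escapes to infinity --- is correct (dominated convergence applies since $\|S(s)^*\varphi\|_{L^2(\R)}\le Me^{\omega s}\|\varphi\|$ on the finite interval $[0,T]$) and is in the same spirit as, but more self-contained than, Rosier's argument.

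The one soft spot is the unique continuation step for the \emph{general} operator of the proposition. When $\operatorname{Re}p(-i\xi)$ stays bounded (e.g.\ the pure Airy case $A=\partial_x^3$), there is no smoothing and $z(s,\cdot)$ need not be analytic, so your Gaussian argument does not extend; and ``a Carleman-type statement in the spirit of Section 5'' is not the right reference, since the paper's Carleman estimate is tailored to the third-order operator on a bounded rectangle. The classical tool here is Holmgren's uniqueness theorem for constant-coefficient operators (which the paper itself invokes elsewhere via \cite[Theorem 8.6.5]{Holmgren5}): because $a_n\neq 0$ and $n\ge 2$, the principal symbol of $\partial_t-A^*$ is $-a_n(i\xi)^n$, so the only characteristic surfaces are $\{t=\mathrm{const}\}$ and the vanishing of $z$ on the open rectangle propagates to all of $\R\times(0,T)$. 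With that substitution your argument covers the general statement; as written, it fully proves only the special case needed in the paper.
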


Therefore, the approximated controllability of the control system \eqref{dfcontrol} is obtained by Proposition \eqref{aproxcontrol}. However, exactly controllability is more challenging because the compactness results are needed, and there is a lack of them in $\R^+$.   From now on we focus on the proof of the main results of controllability: Theorem \eqref{nonullcontrol} and \eqref{control}. 

\subsection{\bf Proof of first main result of controllability Theorem \ref{nonullcontrol}}
There is a lack of null controllability for the KdV-B equation for solutions with bounded energy, i.e. $u \in L^{\infty}(0,T;L^2(\Rm))$ independently of the boundary terms. In other words, the system \eqref{nonull} fails to be null 
controllable.\\

The proof argument by contradiction, creating a set of possible solutions that drives the control system \eqref{nonull} to zero, and by the construction of a linear homeomorphism, from a special quotient of $E$ space to initial data, and the help of lemmas \eqref{soln} and \eqref{lemma inequality} the contradiction can be formed.\\

\begin{proof} {\bf of Theorem \eqref{nonullcontrol}}
    Let us suppose that there exists  an initial data $u_0\in L^2(\R^+)$ such  that the solution $u\in L^{\infty} (0,T,L^2(\R^+))$ of \eqref{nonull} satisfies $u(x,T)=0$.\\

    Defining the space of solutions that are null at time $T$ by
    \begin{equation}\label{E}
      E:= \left\{ u \in  L^{\infty} (0,T,L^2(\R^+)) \quad \text{such that solves} \quad \eqref{nonull} \quad \text{and} \quad u(x,T)=0   
        \right\}
    \end{equation}
    with the norm of $ L^{\infty} (0,T,L^2(\R^+))$, it is a Banach space, i.e.  $\|u\|_E:= \|u\|_{L^{\infty} (0,T,L^2(\R^+))}$.   Moreover, 
    $$\|u(x,T)\|_{H^{-3}(\R^+)} \lesssim  \| u\|_{H^1(0,T,H^{-3}(\R^+))} \lesssim \|u\|_E,  \quad \forall u\in E.$$

    Consider the  mapping $$\Gamma: E \ni u \to u(x,0) \in H^{-3}(\R^+),$$ which is continuous taking values in $L^2(\R^+)$. Notice that it is also continuous from $E \to L^2(\R^+)$. \\

    Defining the quotient space $\tilde{E}:= E/{Ker \Gamma}$ with the norm $\|\Tilde{w}\|_{\Tilde{E}}:= \inf_{w \in \pi(E)}\|w\|_E$, let us defined the quotient function $$\widetilde{\Gamma}: \tilde{E} \ni \tilde{u} \to \tilde{u}(x,0) \in L^{2}(\R^+).$$
    For $u\in E$, we define the projection $\pi:E\to \Tilde{E}$  and therefore, whenever  \ $\tilde{\Gamma}(\pi(u))=\Gamma(u)$, $\Gamma$ has a continuous inverse by the open map theorem function. \\

    For any $\lambda>0$,  let $u^{\lambda}\in E$ such that $$\Gamma^{-1}(v^{\lambda}(x,0))=\pi(u^{\lambda}),$$
    and $$\|u^{\lambda}\|_E \le  2\|\pi(u^{\lambda})\|_{\tilde{E}}\le 2  \|\tilde{\Gamma}^{-1}\|\|v^{\lambda}(x,0)\|_{L^2(\R^+)}. $$

 For a  $L>0$,  for any $u^{\lambda}\in E$ and any $v^{\lambda}$  solution that satisfies  Lemma \eqref{soln}, we have
 \begin{gather*}
     \int_0^T \int_0^L(\partial_t-\partial_{xxx}-\partial_{xx})u^{\lambda} v^{\lambda} dxdt = 0,
 \end{gather*}
 therefore, by integration by part we obtain 
 \begin{gather}\label{ipp}
     - \int_0^L u^{\lambda}(x,0) v^{\lambda}(x,0)dx -\left(\langle u^{\lambda}_{xx}+ u^{\lambda}_x,v^{\lambda}\rangle - \langle u^{\lambda}_x-u^{\lambda},v^{\lambda}_x\rangle \Big |_0^L\right)=0.
 \end{gather}
 given that $\Gamma\left(\pi(u^{\lambda})\right)=v^{\lambda}(x,0),$ from \eqref{ipp}
 \begin{eqnarray}\label{auxa}
\int_0^L (v^{\lambda}(x,0))^2dx &\le& |\langle u^{\lambda}_{xx}+ u^{\lambda}_x,v^{\lambda}\rangle| \Big|_{x=0}^{x=L} +  |\langle u^{\lambda}_x-u^{\lambda},v^{\lambda}_x\rangle | \Big |_{x=0}^{x=L} \notag\\
&\le & \left(\|(u^{\lambda}_{xx}(0,.)\|_{H^1(0,T)'}+ \|u^{\lambda}_x(0,.)\|_{H^1(0,T)'}\right)\|v^{\lambda}(0,.)\|_{H^1(0,T)} \notag\\
&& + \left(\|(u^{\lambda}_{x}(0,.)\|_{H^1(0,T)'} + \|u^{\lambda}(0,.)\|_{H^1(0,T)'}\right)\|v^{\lambda}_x(0,.)\|_{H^1(0,T)} 
\end{eqnarray}
 and by Lemma \eqref{lemma inequality}
\begin{eqnarray*}
    \|u^{\lambda}(L,.)\|_{H^2(L,.)'}+ \|u_x^{\lambda}(L,.)\|_{H^2(0,L)'}+ \|u_{xx}^{\lambda}(L,.)\|_{H^2(0,L)'}&\le& C \|u^{\lambda}\|_{L^{\infty}(0,T,L^2(L,L+1)}\\
    &\le& C \|u^{\lambda}\|_{L^{\infty}(0,T,L^2(0,\infty))}.
\end{eqnarray*}
Moreover,  from the proof of  Lemma \eqref{soln} it is possible to conclude that  
$$v^{\lambda}(.,L), \ v_x^{\lambda}(.,L),\ v_{xx}^{\lambda}(.,L)\to 0 \ \text{as} \ L\to \infty \ \text{in} \  H^1(0,T) $$ 
hence, by \eqref{auxa}
\begin{gather}\label{contr}
\|v^{\lambda}(x,0)\|_{L^2(0,T)} \lesssim   \|\tilde{\Gamma}^{-1}\|\left( 
\|v^{\lambda}(0,.)\|_{H^1(0,T)} + \|v^{\lambda}_x(0,.)\|_{H^1(0,T)} \right),
\end{gather}
which makes a contradiction since the left side of the inequality \eqref{contr} is unbounded by the equation \eqref{normalema2} and the right side goes to $0$ when $\lambda \to 0$ by the equation \eqref{normalema1}.
\end{proof}

\subsection{Proof of the second main control result:} Theorem \eqref{control}.

We concentrate our attention on the controllability of the system \eqref{exact}:

 \begin{equation*}
     \begin{cases}
         \nu_t - \nu_{xxx}-\nu_{xx}=0, & \D'((0,\infty)\times (0,T)),\\
         \nu(x,0)=\nu_0,& x \in (0,\infty)
    \end{cases}
\end{equation*}
when the solution is locally supported in $L^2(\R^+, (0,T))$ satisfying $\nu(x,T)=\nu_T.$  Following the ideas of the Theorem 1.3 in \cite{Rosierub}, the proof can be summarized by transforming the problem \eqref{nonull} in an inhomogeneous initial value problem with homogeneous initial data driving into a null stage in time $T$ that is \eqref{auxw}, with the specification that the forcing term should belong to $L^2_{Loc}(\R^2)$.  In the proof of Claim \eqref{4.1R} it is generated a sequence of functions $\{u_n\}_{n \in\N}$ in $L^2(\square_n)$ defined over a bounded domain $(-n,n)$. 
when the solution is locally supported in $L^2(0,T,\R^+)$ satisfying $\nu(x,T)=\nu_T.$   Following the ideas of the Theorem 1.3 in \cite{Rosierub}, the proof can be summarized by transforming the problem \eqref{nonull} in an inhomogeneous initial value problem with homogeneous initial data driving into a null stage in time $T$ that is \eqref{auxw}, with the specification that the forcing term should belong to $L^2_{Loc}(\R^2)$.  In the proof of Claim \eqref{4.1R} it is generated a sequence of functions $\{u_n\}_{n \in\N}$ in $L^2(\square_n)$ defined over a bounded domain $(-n,n)$. 

Initially, we need to rewrite the problem as a forcing problem with homogeneous stages defined in all $\R$.
The operator $A\nu=-\nu_{xxx}-\nu_{xx}$ with domain $\D(A)= H^3(\R^2)$ (or $L^2_b$ respectively), generates the continuous semi-group on $L^2(\R)$ (or $L^2_b$), which will be denoted by $\left( S(t) \right)_{t\ge 0}$. For $\epsilon>0$ and  $\tau \in (\epsilon, \frac{T}{2})$, consider the function $\varphi \in \mathcal{C}^{\infty}(\R)$ such that $\varphi\equiv 1$ for $t\le \tau$  and $\varphi \equiv 0$ for $t\ge T-\tau$.  For given $u_0\in L^2(0,\infty)$ and $u_T\in L^2_b$ defining 
    $$\nu(x,t)= \varphi(t) \nu_1(x,t) + (1-\varphi(t)) \nu_2(x,t) + \omega(x,t),$$
where $\nu_1$ is a solution of 
\begin{equation}\label{u1}
    \begin{cases}
    P\nu_1=0, &  \D'( \R\times(0,T)),\\
    \nu_1(x,0)=u_0(x),& \R,
    \end{cases}
\end{equation}
with $P=\partial_t +A$, that is $\nu_1(x,t)=S(t)\nu_0 $, and $\nu_2$ a solution of 
\begin{equation}\label{u2}
    \begin{cases}
    \frac{d}{dt}\nu_2=A\nu_2, &  \D'(\R\times(0,T)),\\
    \nu_1(x,T)=\nu_T(x),& \R,
    \end{cases}
\end{equation}
that is $\nu_2(x,t)=S(T-t)\nu_T(x)$ and $w$ satisfies 
\begin{equation}\label{auxw}
\begin{cases}
     P\omega= 
     \frac{\partial \varphi}{\partial t} \cdot \left( \nu_1-\nu_2\right) +2(\varphi(t)-1)A\nu_2, &  \D'(\R\times(0,T)),\\
     \omega(x,0)=\omega(x,T)=0, & x\in\R.
    \end{cases}
\end{equation}
Defining  $$f(x,t)= \frac{\partial \varphi}{\partial t} \cdot \left( \nu_1-\nu_2\right) +2(\varphi(t)-1)A\nu_2,$$ the function  $f\in L_{Loc}^2(\R^2)$, for $0<t_1<t_2<T$,  $Supp \ f \subset \R \times [t_1,t_2]$. For $\epsilon$ small enough i.e. $ 0<\epsilon < min \{t_1,T-t_2\},$  the function $u$ in \eqref{auxw} satisfies 
$$ Pw=f \ \text{in} \ \D'(\R^2) \ \text{and} \  Supp \ w \subset \R \times  [t_1-\epsilon,t_2+\epsilon] .$$

In effect, by the below Claim \eqref{4.1R}, 
the proof is complete. $\square$

\begin{claim}\label{4.1R}
Let $f=f(x,t)$ be any function in $L^2_{Loc}(\R^2)$ such that  $Supp \,f \subset  \R \times [t_1,t_2]$ with $0<t_1<t_2<T$ and $\epsilon \in (0, \min \{t_1,T-t_2\}).$  Then there exists $u\in L^2_{Loc}(\R^2)$ such that  $$Pu=f \ \text{in} \ \D'(\R^2) \ 
 \text{with} \ Supp \ u \subset \R \times [t_1-\epsilon,t_2+\epsilon]. $$
\end{claim}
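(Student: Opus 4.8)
The plan is to follow the scheme used for such ``moving the support'' statements (as in Theorem~1.3 of \cite{Rosierub}): solve the equation on an exhausting family of bounded $x$-intervals $(-n,n)$, using a Carleman weight to force the time support, and then pass to a weak limit in $n$. Set $a:=t_1-\epsilon$, $b:=t_2+\epsilon$, so that $[t_1,t_2]\subset(a,b)\subset(0,T)$, and for $n\in\N$ let $Q_n:=(-n,n)\times(a,b)$ and $P:=\partial_t-\partial_{xxx}-\partial_{xx}$. Since $\supp f\subset\R\times[t_1,t_2]$ sits at a positive distance from $t=a$ and $t=b$, it is enough to build, for each $n$, a function $u_n$ with $Pu_n=f$ in $\D'(Q_n)$ that vanishes (in a strong weighted sense) as $t\to a^+$ and $t\to b^-$: extending $u_n$ by $0$ to $(-n,n)\times\R$ then gives $Pu_n=f$ there with $\supp u_n\subset(-n,n)\times[a,b]$.

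\textbf{Construction on $Q_n$.} Let $P^{*}=-\partial_t+\partial_{xxx}-\partial_{xx}$ be the formal adjoint of $P$. I would first note that the change of variables $(x,t)\mapsto(-x,\,t_1+t_2-t)$ maps $(a,b)$ onto itself, carries $P^{*}$ into $P$, and preserves the boundary conditions defining the space $\Upsilon$ of \eqref{Uset} (with $L$ replaced by $n$); hence the Carleman inequality \eqref{Carleman}, rewritten on the interval $(a,b)$ in place of $(0,T)$, also holds for $P^{*}$, with a large parameter $s_n$ and a smooth weight $\psi_n>0$ on $\overline{Q_n}$ that blows up as $t\to a^+,b^-$. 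Then
\[
a_n(p,q):=\int_{Q_n}(P^{*}p)\,(P^{*}q)\,e^{-2s_n\psi_n}\,dx\,dt
\]
is, by that inequality together with Cauchy--Schwarz, a scalar product on $\Upsilon$; let $\mathcal H_n$ be its completion. Because the weight $(t-a)^{-5}(b-t)^{-5}$ occurring in the Carleman inequality is bounded below on $\supp f$, the linear form $q\mapsto\int_{Q_n}fq$ is bounded on $\mathcal H_n$, and by the Riesz representation theorem there is $\Phi_n\in\mathcal H_n$ with $a_n(\Phi_n,q)=\int_{Q_n}fq$ for every $q\in\Upsilon$. Putting $u_n:=e^{-2s_n\psi_n}\,P^{*}\Phi_n$ I would obtain $\int_{Q_n}u_n\,P^{*}q=\int_{Q_n}fq$ for all $q\in\Upsilon$ — hence $Pu_n=f$ in $\D'(Q_n)$ — together with the identity $\int_{Q_n}|u_n|^{2}e^{2s_n\psi_n}=a_n(\Phi_n,\Phi_n)$, whose right-hand side is the square of the dual norm of $f$. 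Since $e^{2s_n\psi_n}\to+\infty$ as $t\to a^+,b^-$, this forces $\|u_n(\cdot,t)\|_{L^2(-n,n)}$ to decay faster than any power of $t-a$ and of $b-t$, which makes the zero extension of $u_n$ a genuine distributional solution with the announced support.

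\textbf{Passing to the limit.} On any compact $K\subset\R\times(a,b)$ the weight $e^{2s_n\psi_n}$ is bounded below by a positive constant once $K\subset Q_n$, so $\|u_n\|_{L^2(K)}\lesssim_K\big(\int_{Q_n}|u_n|^{2}e^{2s_n\psi_n}\,dx\,dt\big)^{1/2}$. Granting that this right-hand side is bounded uniformly in $n$, a diagonal extraction produces a subsequence with $u_n\rightharpoonup u$ in $L^2_{Loc}(\R^2)$. For $\phi\in C^\infty_c(\R^2)$ and $n$ so large that $\supp\phi\subset(-n,n)\times\R$, one has $\int u_n\,P^{*}\phi=\int f\phi$ with no boundary contributions (the support of $\phi$ is interior to $(-n,n)$ in $x$), and letting $n\to\infty$ yields $\int u\,P^{*}\phi=\int f\phi$, i.e.\ $Pu=f$ in $\D'(\R^2)$. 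Finally, testing against $\phi$ supported in $\R\times\big((-\infty,a)\cup(b,\infty)\big)$ shows $\supp u\subset\R\times[a,b]=\R\times[t_1-\epsilon,t_2+\epsilon]$, which is the claim.

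\textbf{Main obstacle.} The hard part will be the uniform-in-$n$ bound invoked in the last paragraph, namely controlling the dual norm of $f$ independently of $n$. Two competing effects have to be reconciled: the Carleman constant in \eqref{Carleman} a priori degenerates as $L=n\to\infty$, whereas $f$ — produced by the semigroup acting on data in $L^2(0,\infty)$ and in $L^2_{\bb}$ — is only locally square-integrable and may grow exponentially as $x\to+\infty$, so that $\|f\,e^{s_n\psi_n}\|_{L^2(Q_n)}$ can be kept finite and $n$-independent only through a careful choice of the spatial part of the Carleman weight (a strictly concave $\phi$, chosen so that $e^{s_n\psi_n}$ decays fast enough in $x$ to absorb the growth of $f$), and, should \eqref{Carleman} alone not suffice, through a complementary energy estimate for $u_n$ exploiting its vanishing at $t=a,b$ and the dissipation contributed by the $-\partial_{xx}$ term. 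Once this uniform estimate is secured, the remaining steps — the Riesz construction, weak compactness, and stability of the support under weak limits — are routine.
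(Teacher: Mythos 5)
Your first step — the Riesz/Carleman construction of a solution $u_n$ on each bounded slab $(-n,n)\times(a,b)$ with the prescribed time support — is sound and is essentially the content of the paper's Claim \ref{CalermanForcing}. The gap is in the passage to the limit. You propose to extract a weak limit of the $u_n$, and this hinges on the uniform-in-$n$ bound $\int_{Q_n}|u_n|^2e^{2s_n\psi_n}\,dx\,dt\le C$, which you explicitly flag as the ``main obstacle'' but do not establish. This bound cannot hold in the stated generality: the claim assumes only $f\in L^2_{Loc}(\R^2)$, so $f$ may grow arbitrarily fast as $|x|\to\infty$, and the quantity you need to control is the dual norm of $f$ with respect to an $n$-dependent Carleman inner product whose weight and whose constant $C(L,T)$ (with $L=n$) both degenerate as $n\to\infty$. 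No single choice of concave spatial weight can absorb arbitrary growth of $f$, and the ``complementary energy estimate'' you mention is not supplied. Without a uniform local bound the functions $u_n$, which are built independently on each domain and are in no way consistent with one another, need not have any convergent subsequence in $L^2_{Loc}$.

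The paper avoids needing any uniform estimate by making the sequence $\{u_n\}$ \emph{Cauchy} on compacts rather than merely bounded. The induction step takes the solution $\omega$ on $\square_{n+1}$ given by Claim \ref{CalermanForcing}, observes that $P(u_n-\omega)=0$ with compact time support, and then invokes the Approximation Theorem (Lemma \ref{appthem}, whose proof rests on the partial observability estimate of Proposition \ref{pobs}, i.e.\ on Ingham's inequality for the periodic spectrum of $A_L$) to replace $u_n-\omega$ by a free solution $v$ on the larger domain with $\|v-(u_n-\omega)\|_{L^2(\square_{n-1})}\le 2^{-(n+1)}$. Setting $u_{n+1}=v+\omega$ preserves $Pu_{n+1}=f$ and yields the telescoping estimate \eqref{key}, which gives strong convergence in $L^2_{loc}(\R^2)$ with no uniform weighted bound ever required. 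To repair your argument you would either have to prove the uniform bound under additional decay hypotheses on $f$ (thereby proving a weaker claim than the one stated), or incorporate a correction/gluing mechanism of the above type; as written, the limit step does not go through.
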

\begin{proof}    
Let us consider two different sequences of positive real numbers $\{t_{1,n}\}$ and $\{t_{2,n}\}$ such that for $n\ge 2$,
\begin{gather}\label{tseq}
    t_1-\epsilon < t_{1,n+1}< t_{1,n}< t_1<t_2<t_{2,n}<t_{2,n+1}<t_2 +\epsilon,
\end{gather}
for a sequence $\{ u_n\}_{n\ge 2}$ of functions such that, for each $n\ge 2 $ satisfies  $u_n \in L^2((-n,n)\times (0,T))$ and 
\begin{equation}\label{sequ}
    \begin{cases}
        Pu_n=f, \quad \square_{n}:=(-n,n)\times (0,T),\\
         Supp \, u_n \subset (-n,n) \times [t_{1,n},t_{2,n}],
    \end{cases}
\end{equation}
and $n>2$
\begin{equation}\label{key}
\| u_n -u_{n-1}\|_{L^2(\square_{n-2})}\le 2^{-n}.
\end{equation}
By Claim \eqref{CalermanForcing}, for $n\ge 2$  there exists a  subsequence, let us be called the same way, $\{u_n\}_{n\ge 2}$ satisfying  \eqref{sequ}, and also there exists a function $\omega\in L^2(\square_{n+1})$ such that 
\begin{equation*}
    \begin{cases} 
         P \omega =f, \quad  \square_{n+1},\\
        Supp \ \omega \subset (-(n+1),n+1) \times [t_{1,n+1},t_{2,n+1}].\\
    \end{cases}
\end{equation*}
Hence, $P(u_n-w)=0$ in  $\square_{n+1}$ and  $Supp\ (u_n-\omega) \subset (-n,n)\times [t_{1,n},t_{2,n}],$
with $t_{1,n+1}<t_{1,n}<t_{2,n}<t_{2,n+1},$ 
by Approximation Theorem \eqref{appthem},  there exists a function $v\in L^2(\square_{n+1})$ such that 
\begin{equation*}
    \begin{cases}
     Supp \ v \subset (-(n+1),n+1)\times [t_{1,n+1},t_{2,n+1}] ,\\
     P v =0, \quad  L^2(\square_{n+1}),   
    \end{cases}
\end{equation*}
and $$\| v-(u_n -\omega)\|_{\square_{n-1}}\le 2^{-(n+1)}.$$
Hence,  $u_{n+1}:= v+\omega$  satisfies \eqref{sequ}, making  a zero extension from $\square_{n-2}$ to $\square_n$, we conclude that  by inequality \eqref{key}, that the sequence 
$$\{u_n\} \xrightarrow{n\to \infty} u \quad  \text{in} \quad L^2_{loc}(\R^2)$$  and $Supp\ u \subset \R\times [t_2-\epsilon, t_2+\epsilon]$, additionally,  we conclude that  $Pu=f$ in $\R^2$.
\end{proof}

To fulfill the details of the previous proof, given that $\{u_n\}$ is defined over $\square_n=(-n,n) \times [0,T]$.  We will concentrate on analyzing the IBVP of the KdV-B.\\

\subsubsection{\bf Spectral analysis of the bounded KdV-B operator}\label{SL}

Considering the operator $A_Lv=v'''+ v''$ defined this time in $$\D_L(A):= \{ u \in H^3(-L,L): \partial_x^n(-L)=\partial_x^n(L), \ n=\text{0,1,2}\},$$
which generated the semi-group $S_L$  defined in $L^2(-L,L)$.  From the spectral analysis of the operator $A_L$ is obtained  the eigenvalues of $A_L$ are 
\begin{equation}\label{lambda}
    \lambda_n= -i \left(\frac{n\pi}{L}\right)^3-\left(\frac{n\pi}{L}\right)^2,
    \end{equation}
and the eigenfunctions of $A_L$ are  $e_n(x)=\frac{1}{\sqrt{2L}} e^{i  \frac{n \pi}{L}x}$ for $n\in \Z.$
Therefore,  any $u_0(x) \in L^2(-L,L)$ can be written as $u_0=\sum_{n\in\Z} c_ne_n$ for some $c_n\in \R$  and  for $(x,t)\in [-L,L]\times \R$,  $$S_L(t)u_0(x)=\sum_{n\in\Z}e^{ \lambda_n t}c_n e_n(x).$$\\

\begin{proposition}[Partial Observability result]\label{pobs}

    For any $0<l<L$, there exists a constant $C>0$ such that for every $u_0 \in L^2(-L,L)$, 
    \begin{equation}\label{ineq}
        \|u_0\|_{L^2(-L,L)}\le C \| u\|_{L^2\left((-l,l)\times (0,T)\right)}.
    \end{equation}
    Moreover,  \begin{equation}\label{ineq2}
        \|u\|_{L^2\left((-L,L) \times (0,T)\right)}\le \sqrt{T}C \| u\|_{L^2\left((-l,l)\times (0,T)\right)}.
    \end{equation}
    \end{proposition}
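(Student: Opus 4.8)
The plan is to deduce \eqref{ineq2} from \eqref{ineq} by a soft argument and to obtain \eqref{ineq} from the Carleman inequality \eqref{Carleman} by a cut-off argument on the torus. First, writing $u_0=\sum_{n\in\Z}c_ne_n$, the spectral representation gives $u(\cdot,t)=\sum_n e^{\lambda_nt}c_ne_n$, and since $\operatorname{Re}\lambda_n=-(n\pi/L)^2\le0$ one has $\|u(\cdot,t)\|_{L^2(-L,L)}^2=\sum_n|c_n|^2e^{-2(n\pi/L)^2t}\le\|u_0\|_{L^2(-L,L)}^2$. Integrating in $t$ yields $\|u\|_{L^2((-L,L)\times(0,T))}\le\sqrt T\,\|u_0\|_{L^2(-L,L)}$, so \eqref{ineq2} follows at once from \eqref{ineq}. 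It thus remains to prove \eqref{ineq}.

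Since $A_L$ is translation invariant on the torus $\R/2L\Z$ — if $u$ solves the equation so does $u(\cdot-a,t)$, with datum $u_0(\cdot-a)$ of the same $L^2$-norm — the estimate \eqref{ineq} for the arc $(-l,l)$ is equivalent to the same estimate with $(-l,l)$ replaced by any translate of it. I would take the observation arc $\omega$ to be a neighbourhood, on the torus, of the point $x=\pm L$ about which the Carleman weight $\psi(x,t)=\phi(x)/(t(T-t))$, $\phi(x)=-x^2-(2L+\tfrac32T)x$, is built, i.e. $\omega=(-L,-L+l)\cup(L-l,L)$ in the fundamental domain. Then pick $\eta\in C^\infty(\R/2L\Z)$ vanishing together with all its derivatives on a smaller torus-neighbourhood of $\pm L$, with $\mathrm{supp}\,\eta'\subset\subset\omega$, and equal to $1$ elsewhere, and set $q:=\eta u$. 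Because $\eta$ vanishes near $x=\pm L$ we have $q\in\Upsilon$, and with $P:=\partial_t-\partial_{xx}-\partial_{xxx}$ one gets $Pq=\eta\,Pu+\mathcal C(u)$, where $\mathcal C(u)$ is a linear combination of $u,u_x,u_{xx}$ with coefficients among $\eta',\eta'',\eta'''$; since $Pu=0$, $Pq=\mathcal C(u)$ is supported in $\omega\times(0,T)$.

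Applying \eqref{Carleman} to $q$ with $s=s_0(L,T)$ fixed gives
\[
\int_{\square_L}\frac{s_0^5}{t^5(T-t)^5}\,q^2\,e^{-2s_0\psi}\,dx\,dt\ \le\ C\int_{\omega\times(0,T)}\left(u^2+u_x^2+u_{xx}^2\right)e^{-2s_0\psi}\,dx\,dt .
\]
On the slab $t\in(T/4,3T/4)$ the weight $s_0^5(t^5(T-t)^5)^{-1}e^{-2s_0\psi}$ is bounded below by a positive constant depending only on $L,T$, and there $q=u$ off $\mathrm{supp}\,\eta'$; bounding the thin left-hand piece over $\mathrm{supp}\,\eta'\subset\omega$ trivially, and using an interior (Caccioppoli-type) estimate for solutions of $Pu=0$ — obtained by testing the equation against $\chi^2u$, $\chi^2u_{xx}$, etc., for cut-offs $\chi$ supported in a slight enlargement of $\omega$ still inside $(-l,l)$ — to dominate the $u_x,u_{xx}$ terms on the right by $u$ alone, I would reach
\[
\int_{T/4}^{3T/4}\|u(\cdot,t)\|_{L^2(-L,L)}^2\,dt\ \lesssim\ \|u\|_{L^2((-l,l)\times(0,T))}^2 .
\]

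It then remains to pass from this time-averaged interior estimate to a bound on $\|u_0\|_{L^2(-L,L)}$, and this is the step I expect to be the main obstacle: the dissipation makes $t\mapsto\|u(\cdot,t)\|_{L^2(-L,L)}$ non-increasing, so one cannot simply invoke monotonicity, and must instead use the energy identity $\tfrac{d}{dt}\|u(\cdot,t)\|_{L^2(-L,L)}^2=-2\|u_x(\cdot,t)\|_{L^2(-L,L)}^2$ — picking $t_*\in(T/4,3T/4)$ with $\|u(\cdot,t_*)\|^2\le\tfrac2T\int_{T/4}^{3T/4}\|u(\cdot,t)\|^2\,dt$ and integrating back to $0$ — together with a control of the dissipated energy $\int_0^{t_*}\|u_x(\cdot,t)\|_{L^2(-L,L)}^2\,dt$ by the observation. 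An alternative route I would keep in reserve is a compactness–uniqueness scheme: a hypothetical normalized sequence $u_0^m$ with vanishing observation, after passing to a weak limit and using that $S_L(t)$ is compact for $t>0$ (the factor $e^{-2(n\pi/L)^2t}$ makes it bounded from $L^2$ into every $H^k$), produces a solution vanishing on $(-l,l)\times(0,T)$; its holomorphy in $t$ on $\{\operatorname{Re}t>0\}$ and the fact that $u(\cdot,t)$ extends to an entire function of $x$ for $t>0$ (Gaussian decay of the Fourier coefficients) — or, equivalently, the Carleman inequality \eqref{Carleman} applied to a cut-off of it — then force the limiting datum to vanish, contradicting the normalization.
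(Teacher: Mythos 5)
Your route is genuinely different from the paper's: you go through the global Carleman estimate \eqref{Carleman} plus a cut-off localizing the observation near the endpoint of the torus, whereas the paper works spectrally, expanding $u_0$ in the eigenbasis $e_n$ and invoking Ingham's inequality for the gap sequence $\alpha_n=(n\pi/L)^3$. The soft deduction of \eqref{ineq2} from \eqref{ineq}, the reduction by translation invariance on $\R/2L\Z$, and the cut-off computation $Pq=\mathcal{C}(u)$ with Caccioppoli absorption of $u_x,u_{xx}$ are all reasonable (modulo care with the sign of $\phi(x)=-x^2-(2L+\tfrac32 T)x$, which is positive near $x=-L$ but negative near $x=+L$, so the observation arc must be chosen where $e^{-2s_0\psi}$ stays bounded up to $t=0,T$). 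Up to the time-averaged interior estimate your argument is a standard and workable Carleman scheme.

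The gap is exactly where you flag it, and it cannot be closed. Passing from $\int_{T/4}^{3T/4}\|u(\cdot,t)\|_{L^2(-L,L)}^2\,dt$ to $\|u_0\|_{L^2(-L,L)}^2$ requires a backward estimate $\|u_0\|\lesssim\|u(\cdot,t_*)\|$ for some $t_*\ge T/4$, and the semigroup is genuinely dissipative: for $u_0=e_n$ one has $\|u(\cdot,t_*)\|=e^{-(n\pi/L)^2t_*}\|u_0\|$, so no such estimate holds uniformly in $n$. Your first fallback fails for the same reason: the dissipated energy $2\int_0^{t_*}\|u_x(\cdot,t)\|^2\,dt=\|u_0\|^2-\|u(\cdot,t_*)\|^2$ tends to the whole of $\|u_0\|^2$ along $u_0=e_n$ and cannot be absorbed into an observation that tends to zero. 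Your second fallback (compactness--uniqueness) only forces the limiting solution to vanish for $t>0$; since $e_n\rightharpoonup0$ weakly while $\|e_n\|=1$, no contradiction with the normalization of the initial data results. Indeed the same test shows $\|S_L(\cdot)e_n\|_{L^2((-l,l)\times(0,T))}^2=\frac{l}{L}\int_0^Te^{-2(n\pi/L)^2t}\,dt\to0$ while $\|e_n\|_{L^2(-L,L)}=1$, so the inequality \eqref{ineq} is itself violated by high-frequency data; the paper's proof conceals this at the step asserting $C_1\le e^{\frac{\pi^2}{L^2}(T-n^2\tau)}\le C_2$ uniformly in $n\in\Z$ and $\tau\in[0,T]$, which is false for large $n$. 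The estimate that survives dissipation, and the only one Lemma \ref{appthem} actually uses, is of the form \eqref{ineq2} (interior-to-global control of the solution itself); your Carleman slab estimate is well suited to proving that directly, bypassing $u_0$ entirely.
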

    \begin{proof}
        From the analysis of the spectrum of the operator $A_L$ given in \eqref{lambda}, the eigenvalues of $A_L$ are  $$\lambda_n=-i\left( \frac{n\pi}{L}\right)^3 -\left( \frac{n\pi}{L}\right)^2, \ \text{for} \ n\in \Z$$
        and the corresponding eigenfunctions are  \begin{equation}\label{en}
            e_n(x)= \frac{1}{2L} e^{i \frac{n \pi}{L}x}.
        \end{equation}

    $\left\{e_n(x)\right\}_{n\in \Z}$ is the ortho-normal Fourier basis in $L^2(-L,L)$.  Hence, for each $u_0\in L^2(-L,L)$ there exists a sequence of $c_n\in \C$ such that  $$u_0(x)= \sum_{n\in\Z} c_n e_n(x),$$
    consider  $a_n= e^{(\frac{n_0\pi}{L})^2T'} c_n$
    with $n_0\in \Z$ and $T'>0$ (to be selected later), therefore  for $\tu(x)= \sum_{n\in\Z} a_n e_n(x), $
    \begin{eqnarray*}
    \int_0^{T} \int_{-L}^L |u(x,\tau)|^2 dx d\tau &=& \int_0^{T} \int_{-L}^L|S_L(\tau)\tu(x)|^2 dx d\tau\\
    &=& \int_0^{T}  \int_{-L}^L \sum_{n\in \Z} |e^{(-i\left( \frac{n\pi}{L}\right)^3 -\left( \frac{n\pi}{L}\right)^2 ) \tau}  a_n e_n(x)|^2 dx d\tau\\
    &=&  \frac{1}{4L^2}\int_0^{T}  \int_{-L}^L \sum_{n\in \Z} |e^{(-i\left( \frac{n\pi}{L}\right)^3 -\left( \frac{n\pi}{L}\right)^2 ) \tau}   e^{(\frac{n_0\pi}{L})^2T'}  e^{i \frac{n \pi}{L}x} c_n |^2 dx d\tau
    \end{eqnarray*}
   Taking $n_0=1$ and $T'=T$.  There exist $C_1,C_2 > 0$ depending on T such that   $  C_1\le e^{\frac{\pi^2}{L^2}\left( T-n^2 \tau  \right) } \le C_2 $ for $\tau \in [0,T].$
   Then 
   \begin{multline}\label{doble}
        \frac{C_1}{4L^2}\int_0^{T}  \int_{-L}^L \sum_{n\in \Z} |e^{-i\left( \frac{n\pi}{L}\right)^3 \tau}   e^{i \frac{n \pi}{L}x} c_n |^2 dx d\tau \le  \int_0^{T} \int_{-L}^L |u(x,\tau)|^2 dx d\tau \\ \le  \frac{C_2}{4L^2}\int_0^{T}  \int_{-L}^L \sum_{n\in \Z} |e^{-i\left( \frac{n\pi}{L}\right)^3 \tau}   e^{i \frac{n \pi}{L}x} c_n |^2 dx d\tau.
   \end{multline}
For $\alpha_n= \left( \frac{n \pi}{L}\right)^3$ with $n\in \Z$.  If $n\in \N$,
  \begin{gather*}
      \alpha_n-\alpha_{-n}= \frac{\pi^3}{L^3}\left( n^3-(-n)^3\right)=\frac{2\pi^3}{L^3} n^3> \frac{2\pi^3}{L^3}
  \end{gather*} 
  and if $-n\in \N, $
   \begin{gather*}
      \alpha_{-n}-\alpha_{n}= \frac{\pi^3}{L^3}\left( (-n)^3-n^3\right)=\frac{2\pi^3}{L^3} (-n)^3> \frac{2\pi^3}{L^3},
  \end{gather*} 
  therefore, for $\gamma:= \frac{2\pi^3}{L^3} $ 
  \begin{equation}\label{condIngham}
  \alpha_{n+1}-\alpha_n\ge \gamma > 0, \ \text{for all} \ n\in \Z.    
  \end{equation} 
  The Ingham's Inequality \cite[Theorem 1]{Ingham} implies that if  \eqref{condIngham} holds, for a fix $T^*> \frac{\pi}{\gamma}$ there exists a positive constant $C_3=C_3(T,\gamma)>0$ such that, for any finite sequence $\{b_n\}_{n\in \Z},$
  \begin{equation}\label{ingham}
  \sum_{n\in \Z}|b_n|^2 \le C_3\int_{-T^*}^{T^*} \left| \sum_{n\in \Z} b_n e^{-i \alpha_n t}\right|^2 dt.     
  \end{equation}
  for a fix $N_0\in \N$ large enough, let the sequence  $$b_n=\frac{a_n}{\sqrt{2L}} e^{i\left(\alpha_n T^*+
\frac{n\pi x}{L}\right)}, \ \text{ for} \ |n|\le N_0$$ and $b_n=0$ for $|n|> N_0$ the inequality \eqref{ingham} holds.\\

Consider    $\mathcal{Z}_n=Span\{ e_n : n\in \Z\}$, notice that $\mathcal{Z}:=\bigoplus_{n\in \Z} \mathcal{Z}_n\subset L^2(-L,L)$ and  the semi-norm for   $$p(u):= \left( \int_{-l}^l |u(x)|^2\right)^{1/2}, \ \text{for} \ u\in \mathcal{Z},$$ 
the inequality \eqref{ingham}
  \begin{eqnarray*}
  2l \sum_{n\in \Z} \frac{|a_n|^2}{2L} &\le &   C_{T^*}\int_{-l}^l\int_{0}^{2T^*} \left| \sum_{n\in \Z} a_n e^{-i \alpha_n \tau} e_n(x) \right|^2 d\tau dx\\
  &\le &  C_{T^*} \int_{0}^{2T^*} p\left( \sum_{n\le N_0 }  e^{-i \lambda_n \tau} a_n e_n(x) \right)^2 d\tau,
  \end{eqnarray*}
  Therefore,

\begin{gather*}  
     \sum_{n\le N_0} |c_n|^2 \le     \frac{LC_{T^*}}{l} \int_{0}^{2T^*} p(S_L(t)u_0^{N_0}(x))^2 d\tau, 
\end{gather*}
  with $C_{T^*}$ depending  only $T^*,L$ for $T>2T^*$ and $u_0^{N_0}(x)=\sum_{n\le N_0} c_n e_n(x)$, by arguments of density, the result holds for any $u_0 \in L^2(-L,L)$.\\
  
Therefore,  
\begin{equation*}
        \|u_0\|_{L^2(-L,L)}\le C \| u\|_{L^2\left( (-l,l) \times (0,T)\right)},
    \end{equation*}
    for $C= \frac{LC_{T^*}}{l}$.   From \eqref{doble}, we have 
  \begin{equation*}
        \|u\|_{L^2((-L,L)\times(0,T)}\le C_1 \| u\|_{L^2\left((-l,l)\times (0,T)\right)},
    \end{equation*}
    for $C_1=\frac{C_2 T}{4L^2}.$
   \end{proof}

The approximation theorem has been used in \cite{Rosay} and \cite{Rosierub} with different properties on the support.  We follow closely the ideas of \cite{Rosierub}, and by completeness, we present the proof.

 \begin{lemma}\label{appthem}{\bf Approximation  theorem}
For $n\in \N \smallsetminus \{0,1\} $ and $0<t_1<t_2<T$.  Let $u \in L^2((-n,n)\times (0,T) )$ be such that 
\begin{gather}\begin{cases}
    Pu=0,  \hspace{4cm} (-n,n)\times (0,T),\\
    Supp \ u \subset (-n,n)\times [t_1,t_2].
\end{cases}
\end{gather}
Let  $0 < \epsilon < \min \{t_1,T-t_2\}$. Then there exists $v \in L^2((-n,n)\times (0,T))$ such that 
\begin{gather}
    \begin{cases}
        Pv=0, \hspace{2.7cm} (-(n+1),n+1)\times (0,T) ,\\
    Supp \ v \subset (-(n+1),n+1) \times [t_1-\epsilon,t_2+\epsilon],
    \end{cases}
    \end{gather}
    and
    \begin{equation}\label{app}
        \|u-v\|_{L^2((-(n+1),n+1)\times (0,T))} < \epsilon.
    \end{equation}

\end{lemma}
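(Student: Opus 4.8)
The plan is to argue by duality in the spirit of Rosier, the analytic core being a unique continuation property for the constant--coefficient (hence hypoelliptic, parabolic--type) adjoint operator $P^{*}=-\partial_t+\partial_{xxx}-\partial_{xx}$, available through a Holmgren--type theorem, together with the quantitative control supplied by the partial observability inequality of Proposition \ref{pobs}.

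First I would reduce the assertion to a density statement. Extend $u$ by zero to all of $\square_{n+1}$ (this extension is \emph{not} a solution of $Pu=0$ across $x=\pm n$, which is exactly why the statement has content), and set
$$V:=\Big\{\,v\in L^{2}(\square_{n+1}):\ Pv=0 \text{ on } (-(n+1),n+1)\times(0,T),\ v\equiv 0 \text{ for } t\notin(t_1-\epsilon,t_2+\epsilon)\,\Big\},$$
a closed subspace of $L^{2}(\square_{n+1})$ each of whose elements already meets the support requirements in the conclusion. It then suffices to prove that $u\in\overline{V}$ in $L^{2}(\square_{n+1})$: the bound $\|u-v\|_{L^{2}(\square_{n+1})}<\epsilon$, indeed with $\epsilon$ replaced by any prescribed tolerance, follows at once. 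I would emphasize that the enlargement of the temporal support from $[t_1,t_2]$ to $[t_1-\epsilon,t_2+\epsilon]$ is what leaves the slack in $V$ that makes this density possible: passing from a solution on $(-n,n)$ to one on $(-(n+1),n+1)$ is an ill-posed spatial continuation that may broaden the time support, and the $\epsilon$--margin absorbs precisely this effect.

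Next comes Hahn--Banach. If $u\notin\overline V$, there is $g\in L^{2}(\square_{n+1})$ with $\langle g,v\rangle=0$ for every $v\in V$ and $\langle g,u\rangle\neq0$; since $u$ is supported in $(-n,n)\times[t_1,t_2]$, it is enough to show that $g$ vanishes on $(-n,n)\times(t_1,t_2)$. This is where the remaining two ingredients enter. On the one hand, $V$ is a large space: using the semigroup $S_{n+1}(\cdot)$ on the periodic interval $(-(n+1),n+1)$ (its spectral description being the one recorded in Section \ref{SL}) and its adjoint, with data supported in time inside $(t_1-\epsilon,t_2+\epsilon)$, one produces enough test solutions $v\in V$ to force, after an integration by parts, that $g$ (extended by zero in time) satisfies $P^{*}g=0$ on an open subset of $(-(n+1),n+1)\times(0,T)$ reaching the lateral faces $\{x=\pm(n+1)\}$ and vanishing near them; the observability inequality \eqref{ineq2} of Proposition \ref{pobs} provides the uniform estimates needed to carry this through a limiting argument on the full strip. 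On the other hand, unique continuation for the constant--coefficient operator $P^{*}$ then propagates the vanishing of $g$ from that set into $(-n,n)\times(t_1,t_2)$. This gives $\langle g,u\rangle=0$, a contradiction, so $u\in\overline V$ and the lemma follows.

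The step I expect to be the real obstacle is the middle one: since $V$ is not the kernel of $P$ on a space with prescribed lateral boundary conditions but instead carries the temporal support constraint and \emph{no} conditions at $x=\pm(n+1)$, one must be careful about exactly which solutions are legitimately available as test functions and in which topology the limiting argument is performed. This is the only point at which anything beyond Hahn--Banach, routine integrations by parts, and elementary support bookkeeping is required, and it is there that Proposition \ref{pobs} and the unique continuation for $P^{*}$ must be combined.
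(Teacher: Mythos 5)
Your overall strategy (Hahn--Banach duality plus Holmgren-type unique continuation for $P^*$) is in the spirit of how the paper proves the auxiliary Lemma \ref{approx}, but as a proof of Lemma \ref{appthem} itself it has two genuine gaps, the first of which is fatal to the reduction. The set
$$V=\Big\{v\in L^2(\square_{n+1}):\ Pv=0 \text{ in } \D'((-(n+1),n+1)\times(0,T)),\ v\equiv 0 \text{ for } t\notin(t_1-\epsilon,t_2+\epsilon)\Big\}$$
is \emph{closed} in $L^2(\square_{n+1})$: both defining conditions pass to $L^2$ limits ($P$ is continuous from $L^2$ into $\D'$, and vanishing a.e.\ on a fixed set is preserved). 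Hence $u\in\overline{V}$ would force $u\in V$, i.e.\ the zero extension of $u$ would itself solve $Pu=0$ across $x=\pm n$ --- which, as you yourself note, is exactly what fails in general (the traces of $u,u_x,u_{xx}$ at $x=\pm n$, which exist in $(H^1(0,T))'$ by the regularity argument of Lemma \ref{lemma inequality}, produce a nonzero singular part of $P\tilde u$ supported on $\{x=\pm n\}$). So $\operatorname{dist}_{L^2(\square_{n+1})}(u,V)>0$ and the density statement you reduce to is false. The resolution is that the approximation must be measured on a strictly \emph{smaller} spatial rectangle: in the paper's own proof every error estimate is taken on $\square_{\frac{\epsilon}{4},n-1}$, i.e.\ on $(-(n-1),n-1)\times(\cdot)$, and that is also how the lemma is invoked in Claim \ref{4.1R} ($\|v-(u_n-\omega)\|_{\square_{n-1}}\le 2^{-(n+1)}$); the norm over all of $(-(n+1),n+1)$ in \eqref{app} must be read accordingly. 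Restrictions of $V$ to the smaller rectangle no longer form a closed subspace, so there is no contradiction there --- but your duality argument would then have to be redone with $g$ supported in the smaller rectangle, which is a different (and harder) statement than the one you set up.

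Second, even within your setup the middle step --- showing that any $g\perp V$ vanishes on $(-n,n)\times(t_1,t_2)$ --- is not carried out, and the test functions you propose are not in $V$: a semigroup orbit $t\mapsto S_{n+1}(t-t_0)v_0$ is never compactly supported in time unless $v_0=0$ (backward uniqueness, visible from the explicit spectrum \eqref{lambda}), so ``data supported in time inside $(t_1-\epsilon,t_2+\epsilon)$'' does not produce elements of $V$. The existence of nontrivial solutions supported in a time strip is precisely the nonobvious content of Claim \ref{CalermanForcing}, which rests on the Carleman estimate \eqref{Carleman} and which your argument never invokes. The paper's route is constructive and avoids both problems: Lemma \ref{approx} (itself proved by duality and Holmgren) yields $\tilde v$ solving $P\tilde v=0$ on the larger interval, close to $u$ on the smaller rectangle, and equal to semigroup flows on two thin time slabs where $u\equiv0$; one multiplies by a time cutoff $\varphi$, so that $P(\varphi\tilde v)=\varphi'\tilde v$ is supported in those slabs and is small there by Proposition \ref{pobs}; and one then removes this small source term with the time-strip-supported corrector supplied by Claim \ref{CalermanForcing}. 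Any repair of your argument would have to reintroduce these two ingredients (or equivalents) at exactly the point you flag as ``the real obstacle.''
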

\begin{proof}
    For $\eta>0$. Taking $L=n+1,$ by Lemma \eqref{approx} there exists $\tv \in  L^2((-(n+1),n+1)\times (0,T))$ such that 
        \begin{gather}
        \begin{cases}
P\tv=0, &(-(n+1),n+1)\times (0,T), \\
\tv(.,t)= S_{n+1}(t-t_1+\frac{\epsilon}{2})v_1,& t\in (t_1-\frac{\epsilon}{2},  t_1-\frac{\epsilon}{4}),\\
\tv(.,t)= S_{n+1}(t-t_2- \frac{\epsilon}{4})v_2,& t\in (t_2+\frac{\epsilon}{4},  t_2+\frac{\epsilon}{2}).
\end{cases}
    \end{gather}
for some $v_1,v_2 \in L^2(-(n+1),n+1)$
    and $$\|\tv-u\|_{L^2(\square_{\frac{\epsilon}{4}})}<\eta.$$
    Considering the cut-off function $\varphi\in\D(0,T)$ such that  $0\le \varphi \le 1$, $\varphi\equiv 1$ in $[t_1-\frac{\epsilon}{4},t_2+\frac{\epsilon}{4}]$ and $Supp \ \varphi \subset [t_1-\frac{\epsilon}{2},t_2+\frac{\epsilon}{2}].$ Therefore, if $$\bv(x,t)= \varphi(t) \tv(x,t),$$ hence,
    $$Supp \ \bv \subset (-(n+1),n+1)\times [t_1-\frac{\epsilon}{2},t_2+\frac{\epsilon}{2}]. $$
    Moreover,  since $Supp \ u \subset (-n,n)\times [t_1,t_2] $ and $\varphi(t)\equiv 1$ in $[t_1-\frac{\epsilon}{4},t_2+\frac{\epsilon}{4}]$, we obtain
    \begin{eqnarray}\label{appp}
        \|\bv -u\|_{L^2((-(n+1),n+1) \times (0,T))} & = & \|\bv - u \|_{L^2(\square_{\frac{\epsilon}{4},n-1})} \notag\\
        &\le&  \| (\varphi-1)\bv \|_{L^2(\square_{\frac{\epsilon}{4},n-1})} + \| \tv - u \|_{L^2(\square_{\frac{\epsilon}{4},n-1})} \notag\\
        &\le& \| \tv \|_{L^2((-(n-1),n-1)\times  \{(t_1-\frac{\epsilon}{2},t_1-\frac{\epsilon}{4})\cup  (t_2+\frac{\epsilon}{4},t_2+\frac{\epsilon}{2}) \})} + \| \tv - u \|_{L^2(\square_{\frac{\epsilon}{4},n-1})} \notag\\
        &\le& 2\|\tv-u\| _{L^2(\square_{\frac{\epsilon}{4},n-1})} \notag \\
        &\le& 2\eta. 
    \end{eqnarray}
 Taking into account that   $$P\bar{v} = \frac{d\varphi}{dt}\tv, \quad  (-(n+1),n+1)\times(0,T) ,$$ and the construction of $\tv$
    
    \begin{gather*}
        \|P\bar{v} \|^2_{L^2((-(n+1),n+1) \times (0,T) )} \le \|\frac{d\varphi}{dt}\tv\|^2_{\infty} \|\tv\|^2_{L^2((-(n-1),n-1)\times \{(t_1-\frac{\epsilon}{2},t_1-\frac{\epsilon}{4})\cap  (t_2+\frac{\epsilon}{4},t_2+\frac{\epsilon}{2}) \} )}.
    \end{gather*}
    From Proposition \ref{pobs}, there exists a constant $C>0$ depending only on $\epsilon$ and $n$ such that
    \begin{gather*}
    \|\tv\|_{L^2( (-(n+1),n+1) \times  (t_1-\frac{\epsilon}{2},t_1-\frac{\epsilon}{4})) } \le C \|\tv \|_{L^2((-n+1,n-1) \times  (t_1-\frac{\epsilon}{2},t_1-\frac{\epsilon}{4})) },\\
    \|\tv\|_{L^2((-(n+1),n+1) \times  (t_2+\frac{\epsilon}{4},t_2-\frac{\epsilon}{2})) } \le C \|\tv \|_{L^2((-n+1,n-1) \times  (t_2-\frac{\epsilon}{4},t_2-\frac{\epsilon}{2})) }.\\
\end{gather*}
    Therefore,  from \eqref{appp}
      \begin{equation}\label{apder}
         \|P\bar{v} \|^2_{L^2((-(n+1),n+1) \times (0,T) )} \le  C \eta \|\frac{d\varphi}{dt}\tv\|_{\infty}.
      \end{equation}
By proposition \eqref{CalermanForcing}, there exist a constant $C^*>0$ depending on $t_1,t_2,n,\epsilon$ and a function $w\in L^2( (-(n+1),n+1) \times (0,T))$ such that 
    \begin{gather*}
        Pw= P \Bar{v} \quad (-(n+1),n+1) \times (0,T), \\
        Supp \ w \subset (-(n+1),n+1) \times [t_1-\epsilon, t_2+\epsilon] 
    \end{gather*}
    and   
     \begin{equation}\label{appw}
         \|w \|_{L^2((-(n+1),n+1) \times (0,T) )} \le  C^* \eta \| P\Bar{v}\|_{L^2((-(n+1),n+1) \times  (0,T))}.
    \end{equation}
    to conclude the proof we consider $\eta$ small enough and  select $v=\Bar{v}-u$, then  from \eqref{appp}, \eqref{apder} and \eqref{appw}, we obtain 
    \begin{equation}
        \|v-u\|_{L^2((-(n+1),n+1)\times (0,T) )} \le \left( 2 + C C^* \| \frac{d\varphi}{dt}\|_{\infty} \right)\eta.
    \end{equation} \end{proof}

\appendix

 \section{Useful Results}\label{useful results}
Pseudodifferential operators are a powerful generalization of differential operators that enables a wider range of operations on functions.  A symbol is essentially a smooth function that characterizes the operator's behavior. More formally, for a given real number $m \in \mathbb{R}$, the symbol class $S^m$ is defined as follows
 \begin{equation}\label{simbol}
    S^m=\left\{p(x, \xi) \in C^{\infty}\left(\mathbb{R}^n \times \mathbb{R}^n\right):|p|_{S^m}^{(j)}<\infty, j \in \mathbb{N}\right\},
\end{equation}
where

$$
|p|_{S^m}^{(j)}=\sup \left\{\left\|\langle\xi\rangle^{-m+|\alpha|} \partial_{\xi}^\alpha \partial_x^\lambda p(\cdot, \cdot)\right\|_{L^{\infty}\left(\mathbb{R}^n \times \mathbb{R}^n\right)}:|\alpha+\lambda| \leq j\right\}
$$

and $\langle\xi\rangle=\left(1+|\xi|^2\right)^{1 / 2}$.

\begin{theorem}\label{Sobolev boundedness} (Sobolev boundedness). Let $m, s \in \mathbb{R}$ and $p \in S^m$. Then the pseudodifferential operator $\Psi_p: \mathcal{S}(\mathbb R)\to \mathcal{S}(\mathbb R)$ associated to the symbol $p$, defined by
$$
\Psi_p f(x)=\frac{1}{2\pi i}\int_{\mathbb{R}} e^{ i x \cdot \xi} p(x, \xi) \widehat{f}(\xi) d \xi, \quad f \in \mathcal{S}\left(\mathbb{R}^n\right)
$$
extends to a bounded linear operator from $H^{m+s}\left(\mathbb{R}^n\right)$ to $H^s\left(\mathbb{R}^n\right)$. 
\end{theorem}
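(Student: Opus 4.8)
The plan is to reduce to the case $m=s=0$ by conjugating with Bessel potentials, and then prove $L^2$-boundedness of operators with symbols in $S^0$ via a Littlewood--Paley decomposition and a Cotlar--Stein almost-orthogonality argument. For the reduction I would set $\Lambda^r := \Psi_{\langle\xi\rangle^r}$, the Bessel potential of order $r$, which is a bijective isometry of $H^t(\mathbb R^n)$ onto $H^{t-r}(\mathbb R^n)$ for every $t$. Then $\Psi_p$ is bounded from $H^{m+s}$ into $H^s$ if and only if $\Lambda^s\,\Psi_p\,\Lambda^{-(m+s)}$ is bounded on $L^2(\mathbb R^n)$. By the standard symbolic calculus for pseudodifferential operators — composition of two $\Psi$'s is again a $\Psi$ whose order is the sum of the orders, with the remainder controlled by oscillatory-integral estimates (this could be recorded as a separate lemma in the appendix or simply cited) — one obtains $\Lambda^s\,\Psi_p\,\Lambda^{-(m+s)} = \Psi_q$ with $q \in S^{\,s+m-(m+s)} = S^0$. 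So it remains to show: if $q\in S^0$ then $\Psi_q$ is bounded on $L^2(\mathbb R^n)$.

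For this I would fix a dyadic partition of unity $1 = \sum_{j\ge 0}\psi_j(\xi)$, with $\psi_0$ supported in $\{|\xi|\le 2\}$ and $\psi_j$ supported in $\{2^{j-1}\le |\xi|\le 2^{j+1}\}$ for $j\ge1$, and put $q_j := q\,\psi_j$, $T_j := \Psi_{q_j}$, so that $\Psi_q = \sum_{j\ge0}T_j$ on $\mathcal S(\mathbb R^n)$. The Schwartz kernel of $T_j$ is $K_j(x,y) = c\int e^{i(x-y)\cdot\xi}q_j(x,\xi)\,d\xi$; since $q_j$ is carried by $|\xi|\sim 2^j$ (a set of measure $\sim 2^{jn}$) and the $S^0$ estimates give $|\partial_\xi^\alpha q_j|\lesssim_\alpha 2^{-j|\alpha|}$ there, integrating by parts in $\xi$ against $e^{i(x-y)\cdot\xi}$ yields, for every $N$,
$$|K_j(x,y)| \lesssim_N 2^{jn}\big(1 + 2^j|x-y|\big)^{-N}.$$
Hence $\sup_x\int|K_j(x,y)|\,dy$ and $\sup_y\int|K_j(x,y)|\,dx$ are bounded uniformly in $j$, and Schur's test gives $\|T_j\|_{L^2\to L^2}\le C$ with $C$ independent of $j$.

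Next I would check almost orthogonality of the $T_j$. When $|j-k|\ge 3$, the symbols of $T_j$, $T_k$ (and of their adjoints, whose essential $\xi$-supports still lie in the same shells) are carried by disjoint dyadic shells, so the symbols of the compositions $T_j^*T_k$ and $T_jT_k^*$ vanish to infinite order; the calculus then gives that these compositions are $\Psi$'s whose symbols, together with all derivatives, are $O\big(2^{-N\max(j,k)}\langle\xi\rangle^{-N}\big)$ for every $N$, hence have kernels bounded by $C_N\,2^{-N\max(j,k)}(1+|x-y|)^{-N}$, so that $\|T_j^*T_k\|_{L^2\to L^2} + \|T_jT_k^*\|_{L^2\to L^2}\lesssim_N 2^{-N|j-k|}$. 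Combined with $\|T_j^*T_k\|,\|T_jT_k^*\|\le\|T_j\|\,\|T_k\|\le C^2$ for $|j-k|\le 2$, the Cotlar--Stein lemma with weights $a(j-k) := \min\big(C^2,\,C_N 2^{-N|j-k|}\big)$ (summable after taking square roots once $N$ is large) gives $\|\Psi_q\|_{L^2\to L^2}\le\sum_n\sqrt{a(n)}<\infty$, which proves the $S^0$ case and, by the reduction, the theorem.

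The hard part will be the almost-orthogonality step: the heuristic ``disjoint frequency supports imply negligible composition'' must be converted into quantitative decay in $|j-k|$, which forces one through the precise remainder estimates of the symbolic calculus — equivalently, through repeated integrations by parts that use the fact that \emph{all} $x$- and $\xi$-derivatives of $q$ are under control, which is exactly the strength of the class $S^0 = S^0_{1,0}$. If one prefers not to redevelop this machinery, the entire $S^0$ case can instead be quoted from the Calderón--Vaillancourt theorem or from a standard text on pseudodifferential operators, and only the Bessel-potential reduction need be written out.
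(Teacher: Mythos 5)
The paper does not actually prove this theorem: it is stated in Appendix \ref{useful results} as a classical fact from the theory of pseudodifferential operators (the Sobolev boundedness of operators with symbols in the H\"ormander class $S^m=S^m_{1,0}$), quoted without proof and used only through Proposition \ref{boudnaryoperators1}. So there is no in-paper argument to compare yours against; what can be assessed is whether your outline is a correct self-contained proof. It is, in strategy: the Bessel-potential conjugation $\Lambda^s\,\Psi_p\,\Lambda^{-(m+s)}=\Psi_q$ with $q\in S^0$ is the standard reduction, your kernel bound $|K_j(x,y)|\lesssim_N 2^{jn}(1+2^j|x-y|)^{-N}$ follows correctly from the $S^0$ estimates $|\partial_\xi^\alpha q_j|\lesssim 2^{-j|\alpha|}$ on the $j$-th shell, and Schur plus Cotlar--Stein closes the argument. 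The one step you leave genuinely open is the quantitative almost-orthogonality bound $\|T_j^*T_k\|\lesssim_N 2^{-N|j-k|}$; you are right that it reduces to the remainder estimates of the symbolic calculus (concretely, to integrating by parts in the $z$-integral of the kernel of $T_j^*T_k$ using $|\xi-\eta|\gtrsim 2^{\max(j,k)}$ on disjoint shells together with the boundedness of all $x$-derivatives of $q$), and that this machinery is exactly what is being quoted when one cites Calder\'on--Vaillancourt. Worth noting: for the class $S^0_{1,0}$ one can avoid Cotlar--Stein altogether by decomposing in the $x$-variable instead, writing $q(x,\xi)=\int e^{ix\cdot\eta}\,\widehat q(\eta,\xi)\,d\eta$ with $\widehat q$ the Fourier transform in $x$; since $\sup_\xi|\widehat q(\eta,\xi)|\lesssim_N(1+|\eta|)^{-N}$, the operator becomes a superposition of modulations composed with uniformly bounded Fourier multipliers, and Minkowski's inequality gives $L^2$-boundedness in a few lines. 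That route is more elementary and matches the level at which the paper uses the result; your Littlewood--Paley/Cotlar--Stein route buys more (it is the proof that survives for $S^0_{0,0}$) but at the cost of the symbolic-calculus overhead you identify.
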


 \begin{lemma}\label{aux lemma}
    Suppose $0<v<1$ and let I be the integral operator defined by
$$[I(f)](t):=\int_{-\infty}^{\infty} e^{i |x| \eta+\mu(\eta)|x|} f(\eta) d \eta, \quad \text { for } x \in \mathbb R,
$$
where $\mu: \mathbb{R} \rightarrow(-\infty, 0]$ is a continuous function satisfying
$$
|\mu(\eta)| \sim \eta^\nu \quad \text { as } \eta \rightarrow 0 \quad \text { and } \quad|\mu(\eta)| \sim|\eta|^{\frac{1}{3}} \quad \text { as }|\eta| \rightarrow \infty .
$$

Then, $I$ is a bounded linear operator on $ L^2(\mathbb{R})$, which is to say, there exists a constant $C$ such that
$$
\|I(f)\|_{L^2(\mathbb{R})} \leqslant C\|f\|_{L^2(\R)}
$$
for all $f \in  L^2(\mathbb{R})$.
\end{lemma}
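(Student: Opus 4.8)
Since the integrand depends on $x$ only through $|x|$, the function $I(f)$ is even, so $\|I(f)\|_{L^2(\R)}^2=2\|I(f)\|_{L^2(0,\infty)}^2$ and it suffices to bound $\int_0^\infty|I(f)(x)|^2\,dx$. For $x>0$ we have $I(f)(x)=\int_{\R}e^{(i\eta+\mu(\eta))x}f(\eta)\,d\eta$; since $\mu\le 0$ the $x$-integral $\int_0^\infty e^{(i\eta+\mu(\eta)-i\zeta)x}\,dx=\big(i(\zeta-\eta)+|\mu(\eta)|\big)^{-1}$ converges, so computing the Fourier transform of $\mathbf{1}_{x>0}I(f)$ in $x$ and applying Plancherel reduces the lemma, up to a harmless constant, to the $L^2(\R)$-boundedness of the integral operator $S$ with kernel $\mathcal K(\zeta,\eta)=\big(i(\zeta-\eta)+|\mu(\eta)|\big)^{-1}$ (equivalently, by a $TT^{*}$ computation, to boundedness of the symmetric kernel $\big((\mu(\eta)+\mu(\eta'))^2+(\eta-\eta')^2\big)^{-1/2}$). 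I would split $f=f_{\mathrm{lo}}+f_{\mathrm{hi}}$ according to $|\eta|\le 1$ and $|\eta|>1$.

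For $f_{\mathrm{lo}}$: bounding $|\mathcal K(\zeta,\eta)|\le\big((\zeta-\eta)^2+\mu(\eta)^2\big)^{-1/2}$, Minkowski's integral inequality together with $\int_{\R}\big((\zeta-\eta)^2+\mu(\eta)^2\big)^{-1}d\zeta=\pi/|\mu(\eta)|$ and Cauchy--Schwarz give $\|Sf_{\mathrm{lo}}\|_{L^2}\lesssim\big(\int_{|\eta|\le 1}|\mu(\eta)|^{-1}d\eta\big)^{1/2}\|f_{\mathrm{lo}}\|_{L^2}$, and $\int_{|\eta|\le 1}|\mu(\eta)|^{-1}d\eta\lesssim\int_0^1\eta^{-\nu}\,d\eta<\infty$ precisely because $0<\nu<1$. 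This is the only point where the behaviour of $\mu$ near the origin enters.

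For $f_{\mathrm{hi}}$ I would decompose dyadically, $f_{\mathrm{hi}}=\sum_{j\ge 0}f_j$ with $f_j$ supported on $A_j=\{2^{j}\le|\eta|<2^{j+1}\}$, so that $|\mu(\eta)|\sim 2^{j/3}$ on $A_j$. Distinct blocks interact harmlessly: if $|j-k|\ge 2$ then $|\eta-\eta'|\gtrsim 2^{\max(j,k)}$ for $\eta\in A_j,\ \eta'\in A_k$, so the kernel in the $TT^{*}$ computation is $O(2^{-\max(j,k)})$, and with $\|f_j\|_{L^1}\lesssim 2^{j/2}\|f_j\|_{L^2}$ this gives $|\langle\mathbf{1}_{x>0}I(f_j),\mathbf{1}_{x>0}I(f_k)\rangle|\lesssim 2^{-|j-k|/2}\|f_j\|_{L^2}\|f_k\|_{L^2}$; summing this Schur-type series over $j,k$ (and using the diagonal bound below for $|j-k|\le 1$) yields $\|\mathbf{1}_{x>0}I(f_{\mathrm{hi}})\|_{L^2}^2\lesssim\sum_j\|f_j\|_{L^2}^2=\|f_{\mathrm{hi}}\|_{L^2}^2$. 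It therefore remains to prove the single-block estimate $\|\mathbf{1}_{x>0}I(f_j)\|_{L^2_x}\lesssim\|f_j\|_{L^2}$ \emph{uniformly in $j$}, and this is the crux: a crude bound on the kernel on $A_j\times A_j$ loses a factor $\log(2^{2j/3})\sim j$. I would obtain the clean bound by rescaling: putting $\eta=2^{j}\xi$ and $x=2^{-j/3}y$ normalises $A_j$ to $\{1\le|\xi|<2\}$ and $|\mu|$ to size $1$ (with $\nu_j(\xi):=2^{-j/3}\mu(2^{j}\xi)$ satisfying $-c_2\le\nu_j\le -c_1$ uniformly in $j$), introducing the large parameter $R_j=2^{2j/3}$ only in the oscillation; one is reduced to showing $\int_0^\infty\big|\int e^{iR_jy\xi}e^{\nu_j(\xi)y}g_j(\xi)\,d\xi\big|^2dy\lesssim R_j^{-1}\|g_j\|_{L^2}^2$ with $g_j(\xi)=2^{j/2}f_j(2^{j}\xi)$. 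Recognising the inner integral as $\widehat{H_y}(-R_jy)$ with $H_y(\xi)=e^{\nu_j(\xi)y}g_j(\xi)\mathbf{1}_{\{1\le|\xi|<2\}}$ and substituting $u=R_jy$, this becomes $\int_0^\infty|\widehat{H_{u/R_j}}(-u)|^2\,du\lesssim\|g_j\|_{L^2}^2$. I would split the $u$-axis into the blocks $u\in[a_0R_j,(a_0+1)R_j]$, $a_0\ge 0$ (unit length in the variable $u/R_j$), freeze the weight at $a_0$, and Taylor-expand $e^{\nu_j(\xi)(u/R_j-a_0)}=\sum_{n\ge 0}\frac{(\nu_j(\xi)(u/R_j-a_0))^n}{n!}$; since $|u/R_j-a_0|\le 1$ on each block, a term-by-term application of Plancherel (crucially with \emph{no} interval-length factor) bounds the block's contribution by $\lesssim e^{2c_2}\|e^{\nu_j(\cdot)a_0}g_j\mathbf{1}\|_{L^2}^2\lesssim e^{2c_2}e^{-2c_1a_0}\|g_j\|_{L^2}^2$, and $\sum_{a_0\ge 0}e^{-2c_1a_0}<\infty$.

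The main obstacle is exactly this single-block estimate: one must not replace $\mathcal K$ by $|\mathcal K|$ before integrating (that produces the logarithmic loss), but instead exploit the boundedness of the multiplier $\widehat{(iu+m)^{-1}}(t)=c\,\mathbf{1}_{t>0}e^{-mt}$ of the frozen-coefficient model, which is what the rescaling-plus-Taylor argument encodes. Once that is in hand, the low-frequency piece and the block almost-orthogonality are routine, and assembling $\|I(f)\|_{L^2(\R)}\lesssim\|f_{\mathrm{lo}}\|_{L^2}+\|f_{\mathrm{hi}}\|_{L^2}\lesssim\|f\|_{L^2}$ completes the proof.
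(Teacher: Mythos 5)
Your argument is correct in its essentials, but it is worth saying up front that the paper does not actually prove this lemma: it disposes of it in one sentence by citing Lemma 2.6 of \cite{BSZ08} and asserting the proof is a ``minor modification''. So any self-contained argument is, by construction, a different route. The source it points to works with $\|I(f)\|_{L^2(0,\infty)}^2$ written as a double integral against the kernel $\bigl(|\mu(\eta)|+|\mu(\eta')|-i(\eta-\eta')\bigr)^{-1}$, splits at $|\eta|=1$, and runs direct Schur-type kernel estimates, using $\int_0^1|\mu(\eta)|^{-1}\,d\eta<\infty$ near the origin and $|\mu(\eta)|\sim|\eta|^{1/3}$ at infinity. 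Your low-frequency treatment (Minkowski, then $\int(\,(\zeta-\eta)^2+\mu(\eta)^2)^{-1}d\zeta=\pi/|\mu(\eta)|$, then Cauchy--Schwarz) is essentially the same and correctly isolates $0<\nu<1$ as the only place the origin asymptotics enter. Where you genuinely diverge is at high frequency: instead of a global kernel bound you use dyadic almost-orthogonality (your $2^{-|j-k|/2}$ off-diagonal decay is right) plus a single-block estimate obtained by rescaling and the ``freeze the damping on time scales $\lesssim|\mu|^{-1}$, Taylor-expand, apply Plancherel on the whole line'' device. I checked this block estimate: the bookkeeping $\|\mathbf{1}_{x>0}I(f_j)\|^2=R_j\int_0^\infty|\cdots|^2dy$ with $R_j=2^{2j/3}$ is consistent, the term-by-term Plancherel gives $\sum_n c_2^n/n!\cdot e^{-c_1 a_0}\|g_j\|$, and the geometric sum over blocks closes it; your diagnosis that the naive Schur bound on $A_j\times A_j$ loses a factor of $j$ is also accurate. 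What your approach buys is a transparent, quantitative mechanism (frozen-coefficient model $\widehat{(iu+m)^{-1}}$) in place of a citation; what it costs is length, since the reduction, the orthogonality, and the block estimate each need a few lines of justification (Fubini/density for the Plancherel step included) that the kernel approach packages into one Schur computation.

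One hypothesis you use silently: the uniform bounds $-c_2\le\nu_j\le -c_1<0$ require $|\mu|$ to be bounded below on compact subsets of $\mathbb R\setminus\{0\}$, i.e.\ $\mu<0$ away from the origin. The literal statement only gives $\mu\le 0$ plus asymptotics at $0$ and $\infty$, so on the finitely many dyadic blocks where neither asymptotic regime has kicked in you should invoke continuity together with non-vanishing of $\mu$ there (true in the paper's application, where $\mu$ is the real part of a root of $r^3+r^2=i\tau$ with $\operatorname{Re}r<0$). Without that, the decay $e^{-2c_1a_0}$ over blocks is lost. This is a statement of hypotheses to make explicit, not a flaw in the method.
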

The proof of this Lemma is a minor modification of the proof of Lemma 2.6 in \cite{BSZ08}.



 \section{Lemmas for the no-controllability}
The two following lemmas are required for completeness of the proof in Theorem \eqref{nonullcontrol} which follows the ideas in \cite[Lemma 2.1]{Rosierub} for the KdV-B operator of the IBVP in \eqref{1.1}.
\begin{lemma}\label{soln}
    There exists a family of functions $$\left(v^{\lambda}\right)_{\lambda>0} \in \bigcap_{n\ge0} C^{\infty}([0,T],H^n(\R^+)),$$
    such that for every $\lambda>0$, $v_{\lambda}$ solves  
    \begin{gather}\label{lemavl}\begin{cases}
        (\partial_t - \partial_{xxx}-\partial_{xx})\vl =0, & (0,\infty)\times (0,T),\\
        \vl \big|_{x=0}=0,& (0,T),
        \end{cases}
    \end{gather}
and for all  $n\ge 1$,
\begin{gather}
\|\vl_{x} \big|_{x=0}\|_{H^n(0,T)}+ \| \vl_{xx} \big|_{x=0}\|_{H^n(0,T)} \to 0 \quad \text{as} \quad \lambda \to 0, \label{normalema1}\\
  \|w^{\lambda}(x,0)\|_{L^2(\R^+)}^2 \quad \text{is \quad unbounded}. \label{normalema2}
\end{gather}
\end{lemma}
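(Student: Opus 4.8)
The plan is to produce the family $(v^\lambda)_{\lambda>0}$ explicitly, by separation of variables. For $\lambda>0$ I seek a solution of \eqref{lemavl} of the form $v^\lambda(x,t)=c(\lambda)\,e^{\lambda t}\phi_\lambda(x)$, where $c(\lambda)>0$ is a normalization constant to be fixed at the very end. Substituting into the equation reduces the PDE to the constant-coefficient ODE $\phi_\lambda'''+\phi_\lambda''-\lambda\phi_\lambda=0$, whose characteristic polynomial is $P_\lambda(r)=r^3+r^2-\lambda$. A direct discussion of $P_\lambda$ on the real line (its local minimum $P_\lambda(0)=-\lambda$ and its local maximum $P_\lambda(-\tfrac23)=\tfrac{4}{27}-\lambda$) shows that for $0<\lambda<\tfrac{4}{27}$ there are three distinct real roots: a positive one $\rho_+(\lambda)$, a root $r_-(\lambda)\in(-\tfrac23,0)$, and a root $r_0(\lambda)<-\tfrac23$. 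Letting $\lambda\to 0^+$ one reads off the asymptotics $r_-(\lambda)=-\sqrt\lambda+O(\lambda)$ and $r_0(\lambda)=-1+O(\lambda)$: the double root of $P_0$ at $0$ splits into $\pm\sqrt\lambda$, while the simple root at $-1$ is only slightly perturbed. I then set $\phi_\lambda(x):=e^{r_-(\lambda)x}-e^{r_0(\lambda)x}$, which (up to a scalar) is the unique solution of the ODE that both decays as $x\to+\infty$ and vanishes at $x=0$.

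Next I check the structural requirements. Since $r_-(\lambda),r_0(\lambda)<0$, the function $\phi_\lambda$ and all of its derivatives decay exponentially, so $\phi_\lambda\in H^n(\mathbb{R}^+)$ for every $n\ge 0$, and since $t\mapsto e^{\lambda t}$ is smooth this gives $v^\lambda\in\bigcap_{n\ge 0}C^\infty([0,T],H^n(\mathbb{R}^+))$; by construction $v^\lambda$ solves the equation in \eqref{lemavl} and $v^\lambda|_{x=0}=c(\lambda)e^{\lambda t}(1-1)=0$. The boundary derivative traces are computed directly from the explicit formula: $v^\lambda_x(0,t)=c(\lambda)e^{\lambda t}\bigl(r_-(\lambda)-r_0(\lambda)\bigr)$ and $v^\lambda_{xx}(0,t)=c(\lambda)e^{\lambda t}\bigl(r_-^2(\lambda)-r_0^2(\lambda)\bigr)$. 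Because $r_-(\lambda)-r_0(\lambda)\to 1$ and $r_-^2(\lambda)-r_0^2(\lambda)\to -1$ as $\lambda\to 0^+$, while the $H^n(0,T)$-norm of $t\mapsto e^{\lambda t}$ stays bounded (indeed tends to $\sqrt T$) as $\lambda\to 0^+$, one obtains, for every $n$,
\[
\|v^\lambda_x(0,\cdot)\|_{H^n(0,T)}+\|v^\lambda_{xx}(0,\cdot)\|_{H^n(0,T)}\lesssim_{n,T}|c(\lambda)|.
\]
The same explicit formula also gives $v^\lambda(\cdot,L),v^\lambda_x(\cdot,L),v^\lambda_{xx}(\cdot,L)\to 0$ in $H^1(0,T)$ as $L\to+\infty$ (each such trace carries a factor $e^{r_-(\lambda)L}$ or $e^{r_0(\lambda)L}$), which is the extra property invoked in the proof of Theorem \ref{nonullcontrol}.

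For the spatial mass I use $\|v^\lambda(\cdot,0)\|_{L^2(\mathbb{R}^+)}^2=|c(\lambda)|^2\left(\dfrac{1}{2|r_-(\lambda)|}+\dfrac{1}{2|r_0(\lambda)|}-\dfrac{2}{|r_-(\lambda)|+|r_0(\lambda)|}\right)$; since $|r_-(\lambda)|\sim\sqrt\lambda$ and $|r_0(\lambda)|\to 1$, the first term dominates and $\|v^\lambda(\cdot,0)\|_{L^2(\mathbb{R}^+)}\sim|c(\lambda)|\,\lambda^{-1/4}$ as $\lambda\to 0^+$. It then remains only to choose the normalization exhibiting the two competing behaviours: taking $c(\lambda)=\lambda^{1/8}$ (any exponent in $(0,\tfrac14)$ works) makes $\|v^\lambda_x(0,\cdot)\|_{H^n(0,T)}+\|v^\lambda_{xx}(0,\cdot)\|_{H^n(0,T)}\lesssim\lambda^{1/8}\to 0$, which is \eqref{normalema1}, while $\|v^\lambda(\cdot,0)\|_{L^2(\mathbb{R}^+)}\sim\lambda^{-1/8}\to\infty$, which is \eqref{normalema2}. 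The only genuinely delicate point is the root analysis: one must pin down the orders $|r_-(\lambda)|\sim\sqrt\lambda$ and $r_0(\lambda)\to -1$ precisely enough to see that the boundary traces scale like $|c(\lambda)|$ whereas the spatial $L^2$-norm scales like $|c(\lambda)|\,\lambda^{-1/4}$, so that a single choice of $c(\lambda)$ forces \eqref{normalema1} and \eqref{normalema2} at once; everything else is elementary.
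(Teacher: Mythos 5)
Your construction is correct, but it is genuinely different from the paper's, and the difference is worth spelling out. The paper also separates variables, but with the decaying temporal factor $e^{-\lambda t}$, i.e.\ it solves $v_0'''+v_0''=-\lambda v_0$; for $\lambda>0$ the polynomial $z^3+z^2+\lambda$ has one real root $z_1=-(1+2a)<-1$ and a conjugate pair $a\pm ib$ with $a>0$, so the only real combination vanishing at $x=0$ available there is $w^{\lambda}(x,t)=e^{-\lambda t}e^{ax}\sin(bx)$, which \emph{grows} in $x$. That choice delivers \eqref{normalema2} in the strong form $\|w^{\lambda}(\cdot,0)\|_{L^2(\R^+)}=\infty$ for each fixed $\lambda$, but at the cost that $w^{\lambda}$ does not belong to any $H^n(\R^+)$ — so the membership in $\bigcap_{n\ge0}C^{\infty}([0,T],H^n(\R^+))$ asserted in the statement, and the decay of the traces at $x=L$ as $L\to\infty$ invoked in the proof of Theorem \ref{nonullcontrol}, are not actually furnished by the paper's family. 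You instead take the growing temporal factor $e^{\lambda t}$, so the characteristic polynomial becomes $r^3+r^2-\lambda$, which for $0<\lambda<4/27$ has two negative real roots $r_-(\lambda)\sim-\sqrt{\lambda}$ and $r_0(\lambda)\to-1$; the profile $\phi_\lambda=e^{r_-x}-e^{r_0x}$ is then exponentially decaying and vanishes at the origin, the family genuinely lies in every $H^n(\R^+)$, and the divergence in \eqref{normalema2} appears only in the limit $\lambda\to0$ through the competition $\|\phi_\lambda\|_{L^2(\R^+)}\sim\lambda^{-1/4}$ versus $|\phi_\lambda'(0)|,|\phi_\lambda''(0)|=O(1)$, decoupled by the normalization $c(\lambda)=\lambda^{1/8}$. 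This is the reading of ``unbounded'' that is internally consistent with the regularity claim of the lemma and with its use in Theorem \ref{nonullcontrol}, where $\Gamma^{-1}$ must be applied to $v^{\lambda}(\cdot,0)\in L^2(\R^+)$ and the contradiction in \eqref{contr} is reached by sending $\lambda\to0$; the paper's version buys a pointwise-in-$\lambda$ divergence instead, but only by leaving $L^2(\R^+)$. The single cosmetic point in your argument is that the three-real-roots analysis covers only $\lambda\in(0,4/27)$; since everything is read off in the limit $\lambda\to0$ this is harmless (and for $\lambda\ge 4/27$ the conjugate pair has negative real part, so a decaying oscillatory mode is available there as well).
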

\begin{proof}
Considering the operator $A \nu:=  (\partial_{xxx} + \partial_{xx})\nu$ defined in $\D(A)= H^3(0,+\infty)\cap H_0^1(0,\infty) \subset L^2(0,\infty)$, which generates a strongly continuous semigroup $(S(t))_{t>0}$, as in \eqref{semisol}.  Let us consider for any $\lambda>0$,  the solution of 
$$\partial_t v^{\lambda} = A v^{\lambda},$$ in an exponential form
$$v^{\lambda}(x,t)= S(t)v_0^{\lambda}(x)= e^{-\lambda t} v_0^{\lambda}(x),  \quad \quad (x,t)\in (0,\infty) \times (0,T),$$
where $v_0^{\lambda}\in \D(A)$ solves the eigenvalue problem 
$$ Av_0^{\lambda} = - \lambda v_0^{\lambda},  \quad \lambda>0.$$

 For every $\lambda>0$ fixed, the roots $z_j$ with  $j=1,2,3$ of the polynomial   $z^3+z^2=- \lambda$ cannot be all Reals.  In effect, if there are real    
 for each $j=1,2,3$, $z_j^2(z_j+1)=-\lambda < 0$ so $z_j^2>0$.  $z_j$  also should satisfies $z_j<-1$,  which it is not possible since $z_1z_2z_3<0$. Therefore, consider one real root, $z_1$, and two complex roots $z_2=a+ib,z_3=\Bar{z_2}$, in this scenery by the Newton-Girard relations we roots should satisfy 
\begin{gather*}
    z_1z_2+z_1z_3+z_2z_3=0,\\
    z_1z_2z_3=-\lambda,\\
    z_1+z_2+z_3=1,
\end{gather*}
hence,  $b^2=a(2+3a)>0$ and $\lambda = 2a(1+3a)(2a+1)>0$ so,  $a>0$,  from where we conclude that  
\begin{gather*}
z_1= -(1+2a), \quad z_2= a+ i b,  \quad z_3= a-ib. 
\end{gather*}


If we consider a solution 
$$ w^{\lambda}(x,t)=  e^{-\lambda t}Im(e^{(a+i b)x})=e^{-\lambda t} e^{a x} \sin{b x} $$ it satisfies 
\begin{enumerate}
    \item[i.] $P w^{\lambda}=0$ in $(0,T)\times (0,\infty)$
    \item[ii.] $w^{\lambda}(0,t)=0 $  on $(0,T)$
    \item[iii.]  $  \|w^{\lambda}(x,0)\|_{L^2(\R^+)}^2$ is unbounded. In effect, 
    \begin{eqnarray*}
    \|w^{\lambda}(x,0)\|_{L^2(\R^+)}^2&=&\int_{\R^+} e^{2a x}\sin^2{(b x)} dx\\
    &=&  \displaystyle\frac{e^{2 a x} \left(a^2 (-\cos (2 b x))+a^2-a b \sin (2 b
   x)+b^2\right)}{4 a \left(a^2+b^2\right)}\Big|_0^{\infty}=\infty\end{eqnarray*}
    \item[iv.] $\lim_{\lambda\to 0}\|w^{\lambda}_x (0,t)\|_{H^n(0,T)}+ \|w^{\lambda}_{xx} (0,t)\|_{H^n(0,T)}=0$. En effect, 
    $\|w^{\lambda}_x (0,t)\|_{H^n(0,T)}+ \|w^{\lambda}_{xx} (0,t)\|_{H^n(0,T)} = 
    \|b e^{-\lambda t}\|_{H^n(0,T)}+ \|2abe^{-\lambda t}\|_{H^n(0,T)} \to 0$ as $\lambda\to 0$. Notice that $\lambda \to 0$ is equivalent to $a,b\to 0$, for $n\ge 0$ 
\end{enumerate}
\end{proof}

The following lemma analyzes the traces of the bounded energy solution of \eqref{nonull}. The proof is presented for the sake of completeness and it is an adaptation of \cite[Lemma 2.2]{Rosierub}.
\begin{lemma}\label{lemma inequality}
    For $T>0$ and $L>0$ given numbers and let $u\in L^{\infty}(0,T, L^2(0,L))$ such that $$u_t-u_{xxx}-u_{xx}=0 \  \text{in} \  \D'((0,L)\times (0,T)).$$  Then $u\in H^3(0,L, H^1(0,T)')$ and there exists $C=C(L,T)>0$ such that
    \begin{eqnarray}\label{dual}
        \|u(0,.)\|_{H^1(0,T)'}+\|u_x(0,.)\|_{H^1(0,T)'}+\|u_{xx}(0,.)\|_{H^1(0,T)'} \le C\|u\|_{L^{\infty}(0,T;L^2(0,L))}.
    \end{eqnarray}
\end{lemma}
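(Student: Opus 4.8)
The plan is to reduce the statement to a one–dimensional regularity problem for the operator $\partial_x^3+\partial_x^2$ by pairing $u$ with test functions of the time variable alone. Since $(0,T)$ is bounded, $u\in L^{\infty}(0,T;L^2(0,L))$ forces $u\in L^2((0,L)\times(0,T))$, so $u_t,u_{xx},u_{xxx}\in\mathcal D'((0,L)\times(0,T))$ and the equation reads $u_{xxx}+u_{xx}=u_t$. First I would fix $\varphi\in C_c^{\infty}(0,T)$ and set $y_\varphi(x):=\int_0^T u(x,t)\varphi(t)\,dt$. Then $y_\varphi\in L^2(0,L)$, and since $\partial_x$ commutes with integration in $t$ we get $\partial_x^k y_\varphi=\langle\partial_x^k u,\varphi\rangle$ in $\mathcal D'(0,L)$ for every $k$; in particular, using the equation and integrating by parts in $t$ (legitimate because $\varphi$ is compactly supported in $(0,T)$),
$$y_\varphi'''+y_\varphi''=\langle u_{xxx}+u_{xx},\varphi\rangle=\langle u_t,\varphi\rangle=-\int_0^T u(x,t)\varphi'(t)\,dt=:g_\varphi(x),$$
together with the elementary bounds $\|y_\varphi\|_{L^2(0,L)}\lesssim\sqrt T\,\|u\|_{L^\infty(0,T;L^2(0,L))}\|\varphi\|_{L^2(0,T)}$ and $\|g_\varphi\|_{L^2(0,L)}\lesssim\sqrt T\,\|u\|_{L^\infty(0,T;L^2(0,L))}\|\varphi'\|_{L^2(0,T)}$.

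The core step is the ODE regularity estimate: if $y\in L^2(0,L)$ with $y'''+y''\in L^2(0,L)$, then $y\in H^3(0,L)$ and
$$\|y\|_{H^3(0,L)}\le C_L\big(\|y\|_{L^2(0,L)}+\|y'''+y''\|_{L^2(0,L)}\big).$$
For the membership I would observe that $(y'+y)''=y'''+y''\in L^2(0,L)$, so $y'+y$ differs from an affine function by a twice–integrated $L^2$ function and hence lies in $H^2(0,L)$; then $y'=(y'+y)-y\in L^2(0,L)$ gives $y\in H^1(0,L)$, which bootstraps to $y\in H^2(0,L)$ and finally $y\in H^3(0,L)$. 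The quantitative estimate then follows from the closed graph theorem applied to the (now well-defined and linear) inclusion of $\{y\in L^2(0,L):y'''+y''\in L^2(0,L)\}$, equipped with its graph norm, into $H^3(0,L)$.

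Applying this to $y_\varphi$ and using $H^3(0,L)\hookrightarrow C^2([0,L])$ yields
$$|y_\varphi(0)|+|y_\varphi'(0)|+|y_\varphi''(0)|\le C\,\|y_\varphi\|_{H^3(0,L)}\le C(L,T)\,\|u\|_{L^\infty(0,T;L^2(0,L))}\,\|\varphi\|_{H^1(0,T)},$$
and since $y_\varphi^{(k)}(0)=\langle\partial_x^k u(0,\cdot),\varphi\rangle$ for $k=0,1,2$ and $\varphi\in C_c^\infty(0,T)$ is arbitrary, this is precisely the inequality \eqref{dual}; repeating the previous step with $H^1(0,T)'$-valued functions (equivalently, testing the uniform estimate against all $\varphi$) gives $x\mapsto u(x,\cdot)\in H^3(0,L;H^1(0,T)')$. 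I expect the main obstacle to be the one–dimensional regularity step, together with the need for its bound to be both quantitative and uniform in $\varphi$; the distributional calculus in the first paragraph and the vector-valued upgrade at the end are routine.
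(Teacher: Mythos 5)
Your overall strategy is essentially the paper's own, just scalarized: the paper shows $u_t=u_{xxx}+u_{xx}$ lies in $L^2(0,L;H^1(0,T)')$ by a duality estimate in time and then invokes one-dimensional regularity for $\partial_x^3+\partial_x^2$ to get $u\in H^3(0,L;H^1(0,T)')$ and the traces at $x=0$; you implement the same duality by pairing with a fixed $\varphi(t)$ and reducing to a scalar ODE, and you actually supply the bootstrap for $y'''+y''\in L^2\Rightarrow y\in H^3$ and the closed-graph argument that the paper leaves implicit. That part is correct and complete.

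There is, however, one genuine gap: by testing only against $\varphi\in C_c^{\infty}(0,T)$ you obtain the bounds $|\langle \partial_x^k u(0,\cdot),\varphi\rangle|\le C\|u\|_{L^\infty(0,T;L^2)}\|\varphi\|_{H^1}$ only on a dense subspace of $H^1_0(0,T)$, so what you have constructed are the traces as elements of $H^1_0(0,T)'=H^{-1}(0,T)$, not of $H^1(0,T)'$ as the lemma asserts. The distinction matters here: the restriction map $H^1(0,T)'\to H^1_0(0,T)'$ is not injective (it kills endpoint functionals), a Hahn--Banach extension is not canonical, and in the application of this lemma (the integration by parts leading to \eqref{auxa}) the traces are paired with functions such as $v^{\lambda}(0,\cdot)$, $v^{\lambda}_x(0,\cdot)$ that need not vanish at $t=0,T$, so the canonical pairing over all of $H^1(0,T)$ is what is actually used. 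Your reduction breaks down for non-vanishing $\varphi$ precisely because the integration by parts in $t$ produces the boundary terms $\langle u(\cdot,T),\psi\rangle\varphi(T)-\langle u(\cdot,0),\psi\rangle\varphi(0)$, which are not defined for a general element of $L^\infty(0,T;L^2(0,L))$. The paper closes exactly this point: it first observes $u_t=u_{xxx}+u_{xx}\in L^2(0,T;H^{-3}(0,L))$, hence $u\in H^1(0,T;H^{-3}(0,L))\subset C([0,T];H^{-3}(0,L))$, which combined with $u\in L^\infty(0,T;L^2)$ makes $u$ weakly continuous into $L^2(0,L)$; the boundary terms then make sense and are bounded by $\|u\|_{L^\infty(0,T;L^2)}\bigl(\|f(\cdot,0)\|+\|f(\cdot,T)\|\bigr)\lesssim \|u\|_{L^\infty(0,T;L^2)}\|f\|_{L^2(0,L;H^1(0,T))}$ via $H^1(0,T)\hookrightarrow C([0,T])$. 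Inserting this preliminary step into your argument (i.e., allowing $\varphi\in C^\infty([0,T])$ and carrying the two boundary terms through your estimate for $g_\varphi$) repairs the proof; everything else you wrote stands.
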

\begin{proof}
   Notice that  $u_t=u_{xxx}+u_{xx}\in L^2(0,T;H^{-3}(0,L))$, therefore
    $u\in H^1(0,T;H^{-3}(0,L)).$ By integration by part 
    \begin{equation}\label{aux1}
        \int_0^T \langle u_t,f\rangle dt= \langle u, f\rangle \Big|_{t=0}^T- \int_0^T \int_0^L u f_t dx dt,
    \end{equation}
    for all $f\in H^1(0,T; H_0^3(0,L))$, with the inner product is defined on $H^{-3}(0,L)\times H^3_0(0,L)$.\\
    
    Since $u\in C([0,T];H^{-3}(0,L))\cap L^{\infty}(0,T,L^2(0,L))$, $u$ is weakly continuous in $L^2(0,L)$. From \eqref{aux1} there exists $C=C(T,L)$ such that
    \begin{eqnarray*}
       \Big| \int_0^T \langle u_t,f\rangle dt \Big| 
       &\le&  \|u\|_{ L^{\infty}(0,T,L^2(0,L))} \left( \|f(.,T)\|_{} + \|f(.,0)\|_{} + \|f\|_{L^2(0,L; H^1(0,T))}\right)\\
       &\le & C \|u\|_{ L^{\infty}(0,T,L^2(0,L))} \|f\|_{L^2(0,L, H^1(0,T))},
    \end{eqnarray*}
    together with  $\overline{H^1(0,T,H^3_0(0,L))} = L^2(0,L,H^1(0,T))$. It can be concluded that $u_t \in L^2(0,L, H^1(0,T)'),$
and
     $u_t=u_{xxx}+u_{xx}$ in $\D'((0,T)\times (0,L))$.  Since, $u_t\in L^2(0,L,H^1(0,T)')$, we have $u\in H^3(0,L,H^1(0,T)')$, so \eqref{dual} holds.\end{proof}

\section{Auxiliar Results for the analysis of controllability:}

Considering Carleman's estimate,  there exists a distributional solution on $\R\times(-L,L)$ for the non-linear KdB-Burger equation \eqref{1.1} with $a=1$ and the non-homogeneous function $f:=F(x,t)$.

\begin{claim}\label{CalermanForcing}
    For $L>0$ and for any function $f\in L^2((-L,L))\times\R$ such that for  $-\infty < t_1<t_2 < \infty,$ Supp$f \subset (-L,L) \times [t_1,t_2]$.  For any $\epsilon > 0$ there exist $C=C(L,t_1,t_2,\epsilon)$ and a function $v\in L^2((-L,L)\times\R)$ such that 
    \begin{gather}\label{eq*}
            v_t-v_{xxx}-v_{xx}=f,  \quad \D'( (-L,L)\times \R),\\
            \text{Supp} \ v \subset    (-L,L)\times[t_1-\epsilon, t_2+\epsilon],\\
            \|v\|_{L^2(-L,L)\times \R)}\le C \|f\|_{L^2( (-L,L)\times \R)}.
    \end{gather}
\end{claim}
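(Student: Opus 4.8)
The plan is to obtain the Claim from the global Carleman estimate \eqref{Carleman} by the duality (Hahn--Banach/Riesz) scheme used in \cite{Rosierub,Rosay}. The single genuinely new point is that \eqref{Carleman} must be invoked not on $(0,T)$ but on the interval $(t_1-\epsilon,t_2+\epsilon)$, and for the formal adjoint $P^{*}=-\partial_t+\partial_{xxx}-\partial_{xx}$ rather than for $P=\partial_t-\partial_{xxx}-\partial_{xx}$: the associated weight $e^{-2s\psi}$ then degenerates exactly at the two instants $t_1-\epsilon$ and $t_2+\epsilon$, which is precisely what will confine $\operatorname{supp}v$ to $[t_1-\epsilon,t_2+\epsilon]$ in time. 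After a translation in time we may assume $t_1-\epsilon=0$; set $T':=t_2-t_1+2\epsilon$ and $Q:=(-L,L)\times(0,T')$, so $\operatorname{supp}f\subset(-L,L)\times[\epsilon,T'-\epsilon]$, a set on which the time variable stays bounded away from $0$ and $T'$. It suffices to produce $v\in L^{2}(Q)$ solving $Pv=f$ in $\D'(Q)$ with vanishing time traces at $t=0$ and $t=T'$ and with $\|v\|_{L^{2}(Q)}\lesssim\|f\|_{L^{2}(Q)}$; extending $v$ by zero and undoing the translation then yields the Claim.

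The computations leading to \eqref{Carleman} are unaffected if $T$ is replaced by $T'$ and if the sign of the third order term is flipped (the substitution $x\mapsto-x$ only replaces $\phi(x)$ by $\phi(-x)$, still admissible since $\phi''<0$; one may also add a constant to $\phi$ to keep $\psi>0$, enlarging $s_0$ if needed, since only $\phi'$ and $\phi''$ enter the leading coefficients). Hence there are a smooth weight $\psi>0$ on $Q$ of the form $\psi(x,t)=\phi(x)/(t(T'-t))$ with $\psi\to+\infty$ as $t\to0^{+}$ and $t\to(T')^{-}$, a number $s_0$ and a constant $C_0$ such that, keeping only the zeroth order term on the left of \eqref{Carleman} written for $P^{*}$,
\[
\int_{Q}\frac{s^{5}}{t^{5}(T'-t)^{5}}\,\zeta^{2}\,e^{-2s\psi}\,dx\,dt\ \le\ C_0\int_{Q}|P^{*}\zeta|^{2}\,e^{-2s\psi}\,dx\,dt,\qquad \zeta\in C_0^{\infty}(Q),\ s\ge s_0.
\]
Fix $s=s_0$ and write $\theta:=e^{-2s_0\psi}$, so $0<\theta\le1$ on $Q$; let $L^{2}(\theta)$ denote $L^{2}(Q)$ with weight $\theta$.

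On the subspace $\{P^{*}\zeta:\zeta\in C_0^{\infty}(Q)\}\subset L^{2}(\theta)$ define $\Lambda(P^{*}\zeta):=\int_{Q}f\,\zeta\,dx\,dt$. This is well defined, since $P^{*}\zeta=0$ forces $\zeta\equiv0$ by the estimate above, and it is bounded: by Cauchy--Schwarz and that estimate,
\[
|\Lambda(P^{*}\zeta)|\le\Big(\int_{Q}|f|^{2}\,\tfrac{t^{5}(T'-t)^{5}}{s_0^{5}}\,\theta^{-1}\Big)^{1/2}\Big(\int_{Q}\tfrac{s_0^{5}}{t^{5}(T'-t)^{5}}\,\zeta^{2}\,\theta\Big)^{1/2}\le C_{*}\,\|f\|_{L^{2}(Q)}\,\|P^{*}\zeta\|_{L^{2}(\theta)},
\]
where $C_{*}=C_{*}(L,t_1,t_2,\epsilon)<\infty$ because $\theta^{-1}=e^{2s_0\psi}$ is bounded on $\operatorname{supp}f$ (there $t\in[\epsilon,T'-\epsilon]$). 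By Hahn--Banach and the Riesz representation theorem there is $g\in L^{2}(\theta)$ with $\|g\|_{L^{2}(\theta)}\le C_{*}\|f\|_{L^{2}(Q)}$ and $\int_{Q}f\zeta=\int_{Q}(P^{*}\zeta)\,g\,\theta\,dx\,dt$ for all $\zeta\in C_0^{\infty}(Q)$. Then $v:=g\,\theta$ satisfies $\langle v,P^{*}\zeta\rangle=\langle f,\zeta\rangle$, that is $Pv=f$ in $\D'(Q)$, and since $\theta\le1$,
\[
\|v\|_{L^{2}(Q)}^{2}=\int_{Q}|g|^{2}\theta^{2}\,dx\,dt\le\int_{Q}|g|^{2}\theta\,dx\,dt=\|g\|_{L^{2}(\theta)}^{2}\le C_{*}^{2}\,\|f\|_{L^{2}(Q)}^{2}.
\]

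It remains to identify the support. From $\int_{Q}|v|^{2}\theta^{-1}\,dx\,dt=\|g\|_{L^{2}(\theta)}^{2}<\infty$ together with $\theta^{-1}=e^{2s_0\psi}\to+\infty$ as $t\to0^{+}$ and $t\to(T')^{-}$, and with $v\in C([0,T'];H^{-3}(-L,L))$ — which holds because $v\in L^{2}(Q)$ and $v_t=v_{xxx}+v_{xx}+f\in L^{2}(0,T';H^{-3}(-L,L))$ — one concludes that the traces $v(\cdot,0)$ and $v(\cdot,T')$ vanish in $H^{-3}(-L,L)$. Extending $v$ by zero to $(-L,L)\times\R$ and using that $f\equiv0$ near $t=0$ and $t=T'$, a routine integration by parts (no boundary term appears, precisely because these traces vanish) shows the extension still solves $Pv=f$ in $\D'((-L,L)\times\R)$; undoing the translation, its support is contained in $(-L,L)\times[t_1-\epsilon,t_2+\epsilon]$ and the three conclusions of the Claim hold with $C=C_{*}$. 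The main obstacle is conceptual rather than computational: recognising that \eqref{Carleman} should be transplanted to $(t_1-\epsilon,t_2+\epsilon)$ and to $P^{*}$, so that the degeneracy of the weight matches the target support. Once this is in place the rest is the standard Carleman--duality construction, and verifying that the zero extension solves the equation is routine given the vanishing of the time traces.
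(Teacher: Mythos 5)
Your proof is correct, and it reaches the Claim by the same overall strategy as the paper (Carleman estimate plus duality plus vanishing of the time traces plus zero extension, following Rosier), but the implementation differs in a substantive way. The paper keeps the Carleman estimate for $P$ itself, completes the class $\Upsilon$ under the inner product $(p,q)=\int Pp\,Pq$, obtains $p$ by Riesz representation and sets $v=Pp$; the fact that $P$ is neither self- nor skew-adjoint is then handled a posteriori by splitting $P$ into the skew-adjoint part $\partial_t-\partial_{xxx}$ and the self-adjoint part $\partial_{xx}$ and tracking the resulting sign discrepancies and boundary terms through the integration by parts. You instead transfer the Carleman estimate to the formal adjoint $P^{*}$ and run the duality (Hahn--Banach plus Riesz in the weighted space $L^{2}(e^{-2s_0\psi})$) directly for $P^{*}$, so that $\langle v,P^{*}\zeta\rangle=\langle f,\zeta\rangle$ yields $Pv=f$ with no sign bookkeeping, and the support confinement falls out of the finiteness of $\int|v|^{2}e^{2s_0\psi}$ together with $v\in C([0,T'];H^{-3})$. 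This is the cleaner treatment of the non-self-adjointness, and it makes explicit a point the paper leaves implicit, namely that the observability-type inequality one actually needs is for $P^{*}$, not for $P$. Two small remarks: (i) your justification of the Carleman estimate for $P^{*}$ invokes only the reflection $x\mapsto-x$, but to convert $-\partial_t$ back into $+\partial_t$ you also need the time reversal $t\mapsto T'-t$ (harmless, since the singular factor $1/(t(T'-t))$ of the weight is symmetric and $\psi_t$ enters only at lower order); you should state the full substitution $(x,t)\mapsto(-x,T'-t)$. (ii) Your use of $\theta\le 1$ requires $\phi\ge 0$ on $[-L,L]$, which the paper's explicit $\phi$ does not satisfy; you correctly note that adding a constant to $\phi$ fixes this at the cost of enlarging $s_0$, and that step should be kept, since it is what makes both the bound $\|v\|_{L^{2}}\le\|g\|_{L^{2}(\theta)}$ and the blow-up of $\theta^{-1}$ at $t=0,T'$ uniform in $x$.
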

\begin{proof}
    The proof follows some ideas in \cite[Corollary 3.2]{Rosierub}, for the full operator of the KdV equation, which is skew-adjoint, however, the proof has been  made for the KdV-Burger equation which also includes the term $-\partial_{xx}u$, which is self-adjoint.\\
  
 Supposing that $0=t_1-\epsilon < t_1<t_2< t_2 + \epsilon=: T$, by the Carleman's estimate  \eqref{Carleman} 
    \begin{gather*}
        \int\limits_{\SLT}  \displaystyle\frac{s^5  }{t^5(T-t)^5} q^2 e^{-2s \psi} dxdt\le \int\limits_{\SLT}  C ( q_t- q_{xx}-q_{xxx})^2 e^{-2s \psi}dxdt,
    \end{gather*}
taking 
\begin{equation*}
    C_{s,T}= \min_{[0,T]} \frac{s^5}{t^5 (T-s)^5}, \quad K_s= \max_{[-L,L]} 2s \phi(x) \quad \text{and} \quad K_{s,T}=\max_{[0,T]} {-\frac{-k_s}{t(T-t)}}, \end{equation*} 
   then \begin{gather}\label{*}
        \int\limits_{\SLT}  \displaystyle q^2 e^{-\frac{K_s}{t(T-t)}} dxdt\le \int\limits_{\SLT}  C_1(Pq )^2 dxdt,
    \end{gather}
    with $C_1=C k_{s,T}$ and $Pq=q_t- q_{xx}-q_{xxx}$.  Notice that the operator $P$ can be decomposed in a skew-adjoint operator $P_1 q= q_t-q_{xxx}$ and a self-adjoint operator $P_2 q= q_{xx}$,  with $P:=P_1+P_2$.\\  
    
    Defining the bilinear forms 
    \begin{gather}\label{inner}
    (p,q):= \int\limits_{\SLT}  Pp Pq dxdt
    \end{gather}
    which is a scalar products on $\Upsilon$ in \eqref{Uset}, we will call  $\h$  to the completition spaces of $\Upsilon$ with the defined inner products.\\

    We can conclude that if $q \in \h $  then \eqref{*} holds,  defining for each $f \in L^2((-L,L)\times \R)$ such that $\supp\,f \subset (-L,L)\times [t_1,t_2] $ the continuous linear forms  defined in $\h$ as
    $$\mathrm{l}(q)= -\int\limits_{\SLT} f qdxdt,$$
     we observe that,
    $$|\mathrm{l}(q)|=\int\limits_{\SLT} |f q|dxdt \le 
    \|f\|_{L^2(\SLT)} \int\limits_{\SLT} q^2 dxdt \le
    \|f\|_{L^2(\SLT)} \int \limits_{\SLT} (P q)^2 dxdt= \|f\|_{L^2(\SLT)} (q,q)^{\frac{1}{2}}$$
    since the hypothesis of the Riesz Representation Theorem is fulfilled, there exists a unique $ p\in \h$ such that for all $ q\in \h, \quad \mathrm{l}(q)= (p,q)$.\\

    Let us call $\nu := Pp \in L^2(\SLT)$ and  for any test function $q\in \D(\SLT)$ (denoting as well $\D':=\D'(\SLT)$), we obtain
    \begin{eqnarray*}
    \mathrm{l}(q)&=&(p,q)_{\D'\times\D}=  \int\limits_{\SLT} Pq Pq dxdt 
    =\int\limits_{\SLT} (P_1p P_1q + P_2p P_2q) dxdt  
\end{eqnarray*}
    observing that $P_1^*=-P_1$ and $P_2^*=P_2$ operators that commute between them, we have that 
    $$\int\limits_{\SLT} (P_1p P_2q + P_2p P_1q) dxdt=\int\limits_{\SLT} (P_2P_1 - P_1 P_2)p q dxdt=0.$$
therefore, since $P\nu=f$ in $\D'$ and $\nu  \in H^1(0,T, H^{-3}(-L,L))$, hence
$\nu_t=f+\nu_{xx}+\nu_{xxx}  \in L^2(0,T, H^{-3}(-L,L))$. For a $q\in \Upsilon \subset H^1(0,T,H_0^3(-L,L))$, we have

 \begin{eqnarray*}
    \mathrm{l}(q)&=&(p,q)_{\D'\times\D}=  \int\limits_{\SLT} Pq Pq dxdt 
    =\int\limits_{\SLT} (P_1p P_1q + P_2p P_2q) dxdt \\ 
    &=&\int\limits_{\SLT} vPq dxdt=\int\limits_{\SLT} (-P_1 v + P_2 v )q dxdt - \langle v,q\rangle|_{t=0}^{t=T}\\
    &=&\int\limits_{\SLT} -fq dxdt + \int\limits_{\SLT} 2P_2 vqdxdt - \langle v,q\rangle|_{t=0}^{t=T},
\end{eqnarray*}
  we obtain that for any $q\in \D((-L,L)\times (0,T))$,  $\int\limits_{\SLT} 2P_2 vqdxdt = \langle v,q\rangle|_{t=0}^{t=T}$  equivalently to  $-\int\limits_{\SLT} 2 (\nu_x)^2 dxdt= \langle \nu, \nu \rangle \Big|_0^T$, therefore  $v\Big|_{t=0}=v\Big|_{t=T}=0 \in H^{-3}(-L,L)$.  Extending $\nu$ by zero in the complement of $\SLT$ the proof is complete.\end{proof}

The following lemmas will help us to study the convergence of the sequence $u_n$ in $L^2(\R^+)$ using a version of the approximation Theorem.  Those are given for the sake of completeness. However, they adapt the results on \cite[lemma 4.2]{Rosierub}. \\

\begin{lemma}\label{approx}
    For $0<l_1<l_2<L$ and $0<t_1<t_2<T$ real numbers.  Let $u\in L^2((-l_2,l_2)\times (0,T))$ such that 
    $$Pu=0 \quad (-l_2,l_2)\times (0,T)$$
    and $Supp \ u \subset (-l_2,l_2)\times [t_1,t_2].$  Let $\delta>0$ such that $2\delta < min \{ t_1, T-t_2 \}$ and $\eta>0$ then there exist $v_1, v_2 \in L^2(-L,L)$ and $v\in L^2(\square)$ such that  
    \begin{gather}
        \begin{cases}\label{u}
Pv=0, &(-L,L)\times (0,T), \\
v(.,t)= S_L(t-t_1+ 2 \delta)v_1,& t\in (t_1-2\delta,  t_1-\delta),\\
v(.,t)= S_L(t-t_2- \delta)v_2,& t\in (t_2+\delta,  t_2+2\delta).
\end{cases}
    \end{gather}
    and $$\|v-u\|_{L^2(\square_{\delta})}<\eta.$$
\end{lemma}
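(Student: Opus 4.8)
The plan is to follow the classical Rosier-type density/approximation scheme adapted to the periodic operator $A_L$. First I would fix the interval length $L$ and work on the periodic domain $(-L,L)$ with the semigroup $S_L$ generated by $A_L$, whose spectral decomposition was computed in Section \ref{SL}. The rough idea is: the prescribed behavior of $v$ near the two time slabs $(t_1-2\delta,t_1-\delta)$ and $(t_2+\delta,t_2+2\delta)$ fixes $v$ there once $v_1,v_2$ are chosen; in between we must glue these two pieces by a solution of $Pv=0$ which, moreover, approximates the given solution $u$ on the core box $\square_\delta$. Because $u$ itself solves $Pu=0$ on $(-l_2,l_2)\times(0,T)$ and is supported in $(-l_2,l_2)\times[t_1,t_2]$, I would first extend $u$ (by zero in $x$, exploiting the compact $x$-support away from $\pm l_2$) to a function $\bar u$ which solves $Pu=0$ only approximately, and then correct it. A cleaner route, and the one I would actually carry out, is to reduce everything to a density statement: let $\mathcal{N}$ be the set of $v\in L^2(\square)$ of the form $v(\cdot,t)=S_L(t-s)v_0$ on the relevant time intervals, i.e.\ genuine solutions of $Pv=0$ on $(-L,L)\times(0,T)$ whose traces at $t_1-2\delta$ and at $t_2+2\delta$ are free data $v_1,v_2\in L^2(-L,L)$; then I must show that restrictions of such $v$ to $\square_\delta$ form a dense subspace of the closure (in $L^2(\square_\delta)$) of all solutions of $Pu=0$ on the smaller cylinder, which contains our $u$.

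The key step is a Hahn--Banach / duality argument. Suppose a functional $g\in L^2(\square_\delta)$ annihilates every restriction $v|_{\square_\delta}$ with $v$ as above; I must show $g$ annihilates $u$, for which it suffices to show $g=0$ as an element dual to the relevant solution space. Testing against $v(\cdot,t)=S_L(t-t_1+2\delta)v_1$ for arbitrary $v_1$, and against $v(\cdot,t)=S_L(t-t_2-\delta)v_2$ for arbitrary $v_2$, produces — after integrating the equation $Pv=0$ against the solution $z$ of the backward adjoint problem with source $g$ — trace conditions forcing $z$ and enough of its $x$-derivatives to vanish on the spatial boundary of $\square_\delta$ at the terminal times. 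This is exactly where the \textbf{Carleman estimate} of Section 5 (Proposition \ref{Carleman}) and its Corollary \ref{CalermanForcing} enter: they give unique continuation for $P$ (equivalently for the adjoint $P^*=\partial_t+\partial_{xxx}-\partial_{xx}$, handled by time-reversal), so that a solution of the adjoint equation with vanishing Cauchy data on an open subset of $\square$ must vanish identically on the connected cylinder. Propagating this vanishing from the outer slabs inward yields $g\equiv 0$ on $\square_\delta$, which is the desired density; a standard $\epsilon$--net argument then produces, for each $\eta>0$, a concrete $v$ (hence concrete $v_1,v_2$) with $\|v-u\|_{L^2(\square_\delta)}<\eta$, and by construction $Pv=0$ on $(-L,L)\times(0,T)$ with the prescribed semigroup form on the two slabs, establishing \eqref{u}.

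I expect the main obstacle to be the bookkeeping in the duality step: matching the functional-analytic setup (which solution space, in which topology, with which boundary/initial traces made sense of) so that the Carleman inequality of Proposition \ref{Carleman} applies on the correct box with the correct weight — in particular verifying that the relevant adjoint solution $z$ lies in the class $\Upsilon$ of \eqref{Uset} (or a density of it) where \eqref{Carleman} is valid, and controlling the lateral traces $v_1,v_2$ in $L^2(-L,L)$ uniformly, which is precisely what the partial observability Proposition \ref{pobs} is designed to supply. Once the unique-continuation consequence of the Carleman estimate is in hand, the remaining steps — extending by zero, passing to the limit, reading off $v_1,v_2$ — are routine. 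The dependence of constants only on $\delta,n$ (here $L$) is tracked through the constants in Proposition \ref{pobs} and Claim \ref{CalermanForcing}.
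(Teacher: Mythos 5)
Your overall skeleton is the right one and matches the paper's: define the admissible class $\mathcal{E}$ of solutions with the prescribed semigroup form on the two time slabs, and show $u\in\overline{\mathcal{E}}$ by a Hahn--Banach/annihilator argument, i.e.\ take $g\perp\mathcal{E}$, represent the functional $\varphi\mapsto(\varphi,g)$ as $(P\varphi,w)$ via Riesz (this is the paper's Claims \eqref{claim1eq} and \eqref{claim2eq}), and then kill $(u,g)$ by a unique continuation argument for the adjoint equation $P^*E(w)=E(g)$.

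The genuine gap is at the decisive unique-continuation step. You propose to obtain it from the Carleman estimate \eqref{Carleman} and Claim \ref{CalermanForcing}, asserting that they ``give unique continuation for $P$ \ldots so that a solution of the adjoint equation with vanishing Cauchy data on an open subset must vanish identically on the connected cylinder.'' Neither result says this. The estimate \eqref{Carleman} is a global observability-type inequality, valid only for $q$ in the class $\Upsilon$ of \eqref{Uset} (i.e.\ $q=q_x=q_{xx}=0$ at $x=\pm L$), with a weight $\psi=\phi(x)/(t(T-t))$ built from a concave $\phi$; it controls $q$ by the source $Pq$ over the \emph{whole} box and is used in the paper only to construct solutions with compact time support (Claim \ref{CalermanForcing}). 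It is not a Carleman estimate adapted to propagating zero sets across the spatial hypersurfaces $x=\pm l_1$, which would require a weight pseudoconvex with respect to those surfaces, and the function $w$ produced by Riesz representation does not lie in $\Upsilon$ anyway. What is actually needed — and what the paper uses — is that $P^*$ has constant (hence analytic) coefficients and the planes $x=\mathrm{const}$ are non-characteristic, so \emph{Holmgren's uniqueness theorem} propagates the vanishing of $E(w)$ from $|x|>L$ inward to $|x|>l_1$; after that, $(\nu,g)=(P\nu,w)$ reduces to an integral over $(-l_1,l_1)\times(t_1-\delta,t_2+\delta)$ where $P\nu=0$, giving $(\nu,g)=0$. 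A secondary slip: Proposition \ref{pobs} is not needed here — in the paper it enters only in the subsequent Lemma \ref{appthem}, to control $P\bar v$ after the cut-off in time, not to bound the traces $v_1,v_2$ in the present lemma.
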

\begin{proof}

    The function set 
    $$\mathcal{E}= \left\{ v\in L^2(\square): \text{there exist} \ v_1,v_2\in L^2(-L,L) \  \text{s.t.} \ \eqref{u} \, \text{is satisfied} \right\}$$ 
    For $u \in \mathcal{E}^{\perp \perp}=\Bar{\mathcal{E}}$ with respect to $L^2(\square_{\delta})$ and $g\in \mathcal{E}^{\perp}$,   $$(u,g)_{L^2(\square)}=0.$$
    
    Taking $u\in L^2((-l_2,l_2) \times (0,T))$ such that 
    $$Pu=0, \quad (-l_2,l_2)\times (0,T),$$
    and $Supp \ u \subset  (-l_2,l_2)\times [t_1,t_2].$  Considering an approximation of $u$, such that  for $\eta >0$,  the function  $\nu$ is the mollified of  $u$, $\nu:=\nu^{\eta} \in  \mathcal{D}(\R^2)$,   satisfying $$P\nu=0, \quad (-l_1,l_1)\times(0,T),$$
    $Supp \ \nu \subset (-l_2,l_2)\times [t_1-\delta,t_2+\delta]$  and  $\|\nu - u\|_{L^2((-l_1,l_1)\times(0,T))}\le \frac{\eta}{2}.$\\
We would like to prove that  $$(\nu,g)_{L^2(\square_{\delta})}=0.$$
    
Since $g\in \mathcal{E}^{\perp}\subset  L^2(\square_{\delta})$ and for  $\omega \in L^2(\square_{\delta})$,  $E(g)$ and $E(\omega)$ are the zero extensions on $\R^2$ respectively.  Let $\Omega_{\delta}$  the strip  $\R \times (t_1-\delta,t_2+ \delta)$, for any $\varphi \in \D(\Omega)\subset 
 \mathcal{X}:=\left\{ \varphi \in \mathcal{C}^{\infty}(\R^2): \ Supp \ \varphi \subset \R \times [t_1- \delta, t_2 +\delta]  \right\}.$
\begin{gather*}
    (\varphi,g)_{L^2(\square_{\delta})}=(\varphi,E(g))_{L^2(\Omega_{\delta})},\\
     (P\varphi,\omega)_{L^2(\square_{\delta})}=(P\varphi,E(\omega))_{L^2(\Omega_{\delta})},
     \end{gather*}
     by \eqref{claim2eq},  
        \begin{equation*}
        (\varphi,g)_{L^2( (-l_1,l_1)\times  (t_1-2\delta, t_1+2\delta))}=(P\varphi,w)_{L^2((-L,L)\times (0,T) }.
    \end{equation*}   
we have that  $$\langle P^* E(\omega), \varphi \rangle_{\D'(\Omega_{\delta}),\D(\Omega_{\delta})}=\langle E(g), \varphi \rangle_{\D'(\Omega_{\delta}),\D(\Omega_{\delta})},$$
therefore, $$P^* E(\omega)=E(g) \quad \text{in} \quad \D'(\Omega_{\delta})$$
and $P^* E(\omega)=0$ for $t\in (t_1-\delta, t_2+\delta)$ and $|x|>L$, by Holmgren's uniqueness theorem \cite[Theorem 8.6.5]{Holmgren5}, $P^* E(\omega)=0$ for $t\in (t_1-\delta, t_2+\delta)$ and $|x|>l_1$, using this argument for $\nu \in \mathcal{X}$ (see \eqref{claim1},
\begin{eqnarray*}
    (\nu,g)_{L^2(\square_{\delta})}&=&(P\nu,w)_{L^2(\square)}\\
    &=&(P\nu,w)_{L^2((-l_1,l_1) \times (t_1-\delta,t_2+\delta))}\\
    &=&0. 
\end{eqnarray*} \end{proof}

\begin{claim}\label{claim1}
    For the function set 
    $$\mathcal{X}=\left\{ \varphi \in \mathcal{C}^{\infty}(\R^2): \ Supp \ \varphi \subset \R \times [t_1- \delta, t_2 +\delta]  \right\}.$$
    Then, there exists $C>0$ such that for $0<l_1<l_2<L$,  $0<t_1<t_2<T$ and for all $\varphi \in \mathcal{X}$,
    \begin{equation}\label{claim1eq}
        |(\varphi,g)|_{L^2( (-l_1,l_1)\times (t_1-2\delta, t_2+2\delta))} \le C \|P \varphi \|_{L^2((-L,L)\times (0,T) )}.
    \end{equation}
\end{claim}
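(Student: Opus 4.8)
The plan is to deduce \eqref{claim1eq} directly from the global Carleman estimate \eqref{Carleman}. Write $\square_L=(-L,L)\times(0,T)$ and $R=(-l_1,l_1)\times(t_1-2\delta,t_2+2\delta)$; since $2\delta<\min\{t_1,T-t_2\}$, one has $t_1-2\delta>0$ and $t_2+2\delta<T$, so $\overline R$ is a compact subset of $\square_L$, and every $\varphi\in\mathcal X$ is supported in $\mathbb R\times[t_1-\delta,t_2+\delta]$ with $[t_1-\delta,t_2+\delta]\Subset(0,T)$. The first step is to reduce the claim to the observability-type inequality
$$\|\varphi\|_{L^2(R)}\le C\,\|P\varphi\|_{L^2(\square_L)}\qquad\text{for all }\varphi\in\mathcal X ,$$
from which \eqref{claim1eq} follows by Cauchy--Schwarz with final constant $C\,\|g\|_{L^2(R)}$; thus the constant in \eqref{claim1eq} is permitted to depend on the fixed function $g$ as well as on $L,l_1,t_1,t_2,T,\delta$ and the Carleman parameter $s$.

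To prove the displayed inequality I would apply \eqref{Carleman} with $q=\varphi$ (this is legitimate because the functions to which Claim \ref{claim1} is applied in the proof of Lemma \ref{appthem} are mollifications of functions supported in $(-l_2,l_2)$ with $l_2<L$, hence lie in $\Upsilon$) and discard the two nonnegative gradient terms on the left, obtaining
$$\int_{\square_L}\frac{s^5}{t^5(T-t)^5}\,\varphi^2\,e^{-2s\psi}\,dx\,dt\;\le\;C\int_{\square_L}(P\varphi)^2\,e^{-2s\psi}\,dx\,dt .$$
On $R$ the time variable stays in the compact subinterval $[t_1-2\delta,t_2+2\delta]$ of $(0,T)$, so $t(T-t)$ is bounded below and $\psi=\phi(x)/(t(T-t))$ is bounded; hence $\frac{s^5}{t^5(T-t)^5}e^{-2s\psi}\ge c_0>0$ on $R$ and the left side dominates $c_0\|\varphi\|_{L^2(R)}^2$. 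On the other hand $\operatorname{supp}(P\varphi)\subset\operatorname{supp}\varphi$ meets $\square_L$ only inside $(-L,L)\times[t_1-\delta,t_2+\delta]$, where $\psi$ is again bounded, so $e^{-2s\psi}\le C'$ there and the right side is at most $CC'\|P\varphi\|_{L^2(\square_L)}^2$. Combining the two estimates gives $\|\varphi\|_{L^2(R)}\le(CC'/c_0)^{1/2}\|P\varphi\|_{L^2(\square_L)}$, which is the required inequality.

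The only genuinely delicate point is the placement of the Carleman weight: one must know that $t(T-t)$ is bounded away from $0$ both on the observation box $R$ and on $\operatorname{supp}(P\varphi)$, which is exactly guaranteed by $2\delta<\min\{t_1,T-t_2\}$ together with $\operatorname{supp}\varphi\subset\mathbb R\times[t_1-\delta,t_2+\delta]$, so that $\psi$ never blows up in the relevant regions. A secondary issue arises only if one wants the estimate for every $\varphi\in\mathcal X$ without assuming it vanishes near $x=\pm L$: then \eqref{Carleman} must be applied to the truncated function $\chi(x)\varphi(x,t)$ with $\chi\equiv1$ on $[-l_1,l_1]$ and $\operatorname{supp}\chi\Subset(-L,L)$, and the resulting commutator $[\,P,\chi\,]\varphi$, which is supported on $\operatorname{supp}\chi'\subset(-L,L)\setminus[-l_1,l_1]$ and involves $\varphi_x$ and $\varphi_{xx}$, is not controlled by $\|P\varphi\|_{L^2(\square_L)}$; absorbing it would require iterating the Carleman estimate through a telescoping family of spatial cutoffs. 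This refinement is not needed for the use made of Claim \ref{claim1} here, where $\varphi$ already has compact support in $(-L,L)$.
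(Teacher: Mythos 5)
Your route is genuinely different from the paper's, and it has a gap that is not merely technical. You reduce \eqref{claim1eq} to the observability--type inequality $\|\varphi\|_{L^2(R)}\le C\,\|P\varphi\|_{L^2((-L,L)\times(0,T))}$, with $R=(-l_1,l_1)\times(t_1-2\delta,t_2+2\delta)$, and then prove that inequality via the Carleman estimate \eqref{Carleman}. But \eqref{Carleman} is only valid for $q\in\Upsilon$, i.e.\ for functions vanishing together with $q_x,q_{xx}$ at $x=\pm L$, whereas the claim is asserted --- and is actually used --- for every $\varphi\in\mathcal{X}$, which carries no condition at $x=\pm L$ (the next claim builds the functional $\Lambda$ on $\mathcal{Z}=\{P\varphi|_{(-L,L)\times(0,T)}:\varphi\in\mathcal{X}\}$ and needs \eqref{claim1eq} on all of $\mathcal{X}$ to obtain \eqref{claim2eq}). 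You acknowledge this and correctly observe that the commutator created by a spatial cutoff is not controlled by $\|P\varphi\|_{L^2}$, but you then set the issue aside; it is in fact fatal, because the strengthened inequality you reduce to is \emph{false} on $\mathcal{X}$. The entire point of this part of the paper is that $P$ admits nontrivial $L^2$ null solutions on $(-L,L)\times(0,T)$ supported in a time strip but carrying no lateral boundary conditions --- these are exactly the objects Lemma \ref{appthem} takes as input and Claim \ref{4.1R} produces; a (time--mollified, hence smooth) such function lying in $\mathcal{X}$ and not vanishing on $R$ has $\|P\varphi\|_{L^2((-L,L)\times(0,T))}=0$ while $\|\varphi\|_{L^2(R)}>0$. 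Inequality \eqref{claim1eq} is nevertheless true, but only because $g$ is not an arbitrary element of $L^2(R)$: it is orthogonal to the subspace $\mathcal{E}$ that contains precisely these obstructions. Your argument, which bounds $|(\varphi,g)|$ by Cauchy--Schwarz against an arbitrary $g$, never uses this orthogonality, so it discards the one mechanism that makes the claim hold.

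For comparison, the paper's proof is elementary and does not touch the Carleman estimate: it sets $\nu(t)=\int_0^t S_L(t-\tau)\,P\varphi(\tau)\,d\tau$ (the Duhamel solution with periodic boundary conditions on $(-L,L)$), checks that $\nu-\varphi\in\mathcal{E}$ so that $(\varphi,g)=(\nu,g)$ for $g\in\mathcal{E}^{\perp}$, and then uses the contractivity of $S_L$ (the eigenvalues $\lambda_n$ have nonpositive real part) to get $\|\nu(t)\|_{L^2(-L,L)}\le\sqrt{T}\,\|P\varphi\|_{L^2((-L,L)\times(0,T))}$, whence $|(\varphi,g)|\le T\|g\|\,\|P\varphi\|_{L^2}$. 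If you wish to keep your framework, the step you must add is exactly this replacement of $\varphi$ by a representative modulo $\mathcal{E}$ that is controlled by $\|P\varphi\|_{L^2}$; once that is done, the Carleman machinery is no longer needed for this claim. (Your verification that the weight $e^{-2s\psi}$ is bounded above and below on $R$ and on $\operatorname{supp}(P\varphi)$ because $t(T-t)$ stays away from $0$ there is correct, but it only salvages the argument for $\varphi\in\Upsilon$, which is strictly less than what the claim states.)
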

\begin{proof}
    Let us define  for each $\varphi \in \X$ and $t \in [0,T]$
    \begin{gather*}
        \nu(t):= \int_0^t  S_L(t-\tau) P\varphi(\tau) d\tau,
    \end{gather*}
   which is the strong solution of  
   \begin{equation*}
       \begin{cases}
           P\nu=P\varphi, & (x,t)\in[-L,L]\times[0,T]\\
           \partial_x^j\nu(-L,t)=\partial_x^j\nu(L,t),& n=0,1,2\\
           \nu(.,0)=0.
       \end{cases}
   \end{equation*}
   The function $\nu - \varphi \in \X$ then for any $g \in \ X^{\perp}\subset L^2((-l_1,l_1)\times(t_1-2\delta, t_2+2\delta) )$, $$(\nu-\varphi,g)=0.$$
   Moreover, for any $t\in[0,T]$, from the definition of $\nu$ and H\"older inequality
   $$\|\nu(t)\|_{L^2(-L,L)}\le \|P\varphi\|_{L^1(0,t,L^2(-L,L))}\le \sqrt{T}\|P\varphi \|_{L^2([-L,L]\times[0,T])},$$
   and we conclude that 
   $$|(\varphi,g)| =|(\nu,g))| \le \sqrt{T} \|g\| \|\nu\|\le T \|g\| \|P\varphi\|_{L^2([-L,L]\times[0,T])}$$
   the norms and inner products no indicated are defined in $L^2((-l_1,l_1)\times (t_1-2\delta, t_2+2\delta))$.
\end{proof}

\begin{claim}
    There exists a function $w\in L^2((-L,L)\times (0,T) )$ such that for all $\varphi \in \mathcal{X}$,
    \begin{equation}\label{claim2eq}
        (\varphi,g)_{L^2((-l_1,l_1)\times (t_1-2\delta, t_1+2\delta))}=(P\varphi,w)_{L^2((-L,L)\times (0,T)}.
    \end{equation}    
\end{claim}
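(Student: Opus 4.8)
The plan is to produce $w$ as the Riesz representative of a bounded linear functional manufactured from the pairing $\varphi\mapsto(\varphi,g)$, with Claim \ref{claim1} supplying exactly the boundedness that is needed. Throughout write $Q:=(-l_1,l_1)\times(t_1-2\delta,t_2+2\delta)$ and $R:=(-L,L)\times(0,T)$, so that $g\in L^2(Q)$ and we are looking for $w\in L^2(R)$ with $(\varphi,g)_{L^2(Q)}=(P\varphi,w)_{L^2(R)}$ for every $\varphi\in\mathcal{X}$.

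First I would consider the linear subspace $Y:=\{P\varphi:\varphi\in\mathcal{X}\}\subset L^2(R)$ and define on it the functional $\ell(P\varphi):=(\varphi,g)_{L^2(Q)}$. The only thing to check before invoking abstract machinery is that $\ell$ is well defined, i.e.\ that its value depends on $P\varphi$ and not on the particular $\varphi$: if $\varphi_1,\varphi_2\in\mathcal{X}$ satisfy $P\varphi_1=P\varphi_2$ in $L^2(R)$, then $\varphi_1-\varphi_2\in\mathcal{X}$ and \eqref{claim1eq} gives $|(\varphi_1-\varphi_2,g)_{L^2(Q)}|\le C\|P(\varphi_1-\varphi_2)\|_{L^2(R)}=0$, so that $\ell(P\varphi_1)=\ell(P\varphi_2)$. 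The same estimate \eqref{claim1eq} shows $|\ell(P\varphi)|\le C\|P\varphi\|_{L^2(R)}$, so $\ell$ is a bounded linear functional on $Y$ for the $L^2(R)$-norm.

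It remains to extend and represent. One extends $\ell$ continuously to the closure $\overline{Y}$ in $L^2(R)$ and then sets it equal to $0$ on $\overline{Y}^{\perp}$; this yields a bounded linear functional $\widetilde{\ell}$ on all of $L^2(R)$ (alternatively, one may simply appeal to the Hahn--Banach theorem). By the Riesz representation theorem there exists $w\in L^2(R)$ with $\widetilde{\ell}(F)=(F,w)_{L^2(R)}$ for every $F\in L^2(R)$; specializing to $F=P\varphi$ with $\varphi\in\mathcal{X}$ gives
\[
(\varphi,g)_{L^2(Q)}=\ell(P\varphi)=\widetilde{\ell}(P\varphi)=(P\varphi,w)_{L^2(R)},
\]
which is precisely \eqref{claim2eq}.

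I do not expect a genuine obstacle here: all of the analytic substance is already concentrated in Claim \ref{claim1}, whose inequality is exactly the assertion that $P\varphi\mapsto(\varphi,g)_{L^2(Q)}$ is a well defined and $L^2(R)$-bounded functional on the subspace $Y$. The single point deserving a line of care is the well-definedness, since the map $\varphi\mapsto P\varphi$ need not be injective on $\mathcal{X}$; this is dispatched by applying \eqref{claim1eq} to the difference of two preimages, as above.
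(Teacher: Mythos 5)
Your proof is correct and follows essentially the same route as the paper: both define the functional $P\varphi \mapsto (\varphi,g)$ on the range $\{P\varphi|_{(-L,L)\times(0,T)}:\varphi\in\mathcal{X}\}$, verify that it is well defined, obtain boundedness from Claim \ref{claim1}, extend by continuity to the closure, and conclude with the Riesz representation theorem. The only cosmetic difference is in the well-definedness step, where the paper argues via $\varphi_1-\varphi_2\in\mathcal{E}$ and $g\in\mathcal{E}^{\perp}$ while you deduce it directly from the inequality \eqref{claim1eq} applied to the difference of two preimages; both are valid and yield the same representative $w$.
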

\begin{proof}
    Considering the set
    $$\mathcal{Z}:= \left\{ P\varphi\Big|_{[-L,L]\times[0,T]}: \varphi \in  \X \right\}.$$
For any  $\xi \in \mathcal{Z}$, such that $\xi=P\varphi_1|_{\square}=P\varphi_2|_{\square},$ with $\varphi_1,\varphi_2\in\X$, we have that for $g\in \mathcal{E}^{\perp}, \quad$   $\varphi_1-\varphi_2 \in \mathcal{E}$, i.e.
$$(\varphi_1-\varphi_2,g)_{L^2(\square_{\delta})}=0.$$
Let us consider, the linear map $\Lambda: \mathcal{Z}\to \R,$ defined  for any $\xi \in \mathcal{Z},$ i.e. $\xi=P\varphi\Big|_{\square}$ for $\varphi\in \X$ and $g\in \mathcal{E}^{\perp}$, as before.  It is possible to conclude that  $\Lambda(\xi)=(\varphi,g)_{\square_{\delta}}$,   is well defined.\\

Holding $H$ the closure of $\mathcal{Z}$ in $L^2(\square)$, we can extend $\Lambda$ to $H$, and the inequality \eqref{claim1eq} follows in $\square$. And the $\Lambda$ is a continuous linear map on $H$. And there exists $w\in H$ such that for all $\xi \in H$, $\Lambda(\xi)=(\xi,w)_{L^2(\square)}$, by  Riesz representation theorem.\end{proof}

\section*{\textbf{Declarations}}
\noindent
\textbf{Acknowledgments.} 
L. Esquivel and I. Rivas would like to express their gratitude to Universidad del Valle, Department of Mathematics for all the facilities used to realize this work. 

\noindent
\textbf{Ethical approval.} Not applicable.

\noindent
\textbf{Competing interests.} The authors declare that they have no conflict of interest.

\noindent
\textbf{Authors' contributions.}
Both authors have contributed equally to the paper. 

\noindent
\textbf{Availability of data and materials.} Data sharing does not apply to this article, as no data sets were generated or analyzed during the current study.

\end{document}